\documentclass{amsart}

\usepackage{amsthm,amssymb,latexsym,amsmath,amsfonts,amscd,amstext}
\usepackage[all]{xy}
\usepackage[english]{babel}
\usepackage[utf8x]{inputenc} \usepackage[T1]{fontenc}
\usepackage{color}
\usepackage{graphicx}	
\usepackage{mathrsfs}
\usepackage[all]{xy}

\usepackage{latexsym}
\usepackage{epsfig}
\usepackage[mathscr]{eucal}

\usepackage{tikz}
\usepackage{tikz-cd}
\usetikzlibrary{arrows,automata}
\usetikzlibrary{fit}
\usepackage{wrapfig}

\newcommand{\p}[1]{{\mathbb{P}^{#1}}}
\newcommand{\calf}{{\mathcal F}}
\newcommand{\calm}{{\mathcal M}}
\newcommand{\caln}{{\mathcal N}}
\newcommand{\calo}{{\mathcal O}}
\newcommand{\C}{\mathbb{C}}
\newcommand{\FF}{\mathbf{F}}
\newcommand{\MM}{\mathbf{F}}
\newcommand{\QQ}{\mathbf{F}}

\DeclareMathOperator{\im}{{im}}
\DeclareMathOperator{\Hom}{Hom}

\DeclareMathOperator{\Kom}{Kom}

\newtheorem{thm}{Theorem}

\newtheorem{cor}[thm]{Corollary}
\newtheorem{lem}[thm]{Lemma}
\newtheorem{pps}[thm]{Proposition}
\newtheorem{dfn}[thm]{Definition}
\newtheorem{obs}[thm]{Remark}

\title{Moduli spaces of framed flags of sheaves on the projective plane}

\author{R. A. von Flach}
\address{IMECC - UNICAMP \\
Departamento de Matem\'atica \\ Rua S\'ergio  \\
13083-970 Campinas-SP, Brazil}
\email{vonflachrodrigo@gmail.com}

\author{Marcos Jardim}
\address{IMECC - UNICAMP \\
Departamento de Matem\'atica \\ Rua S\'ergio  \\
13083-970 Campinas-SP, Brazil}
\email{jardim@ime.unicamp.br}
	
\thanks{This work is part of the first author's PhD thesis. RvF acknowledges the support by CAPES and CNPQ. MJ is partially supported by the CNPq grant number 303332/2014-0 and the FAPESP grant number 2014/14743-8.}

\begin{document}
 
\begin{abstract}
We study the moduli space of framed flags of sheaves on the projective plane via an adaptation of the ADHM construction of framed sheaves. In particular, we prove that, for certain values of the topological invariants, the moduli space of framed flags of sheaves is an irreducible, nonsingular variety carrying a holomorphic pre-symplectic form. 

\noindent{\bf Keywords:} framed sheaves, ADHM construction, moduli spaces.
\end{abstract}

\maketitle

\tableofcontents

\section{Introduction}

Moduli spaces of flags of sheaves played an important role first in the work of Grojnowski \cite{G} and Nakajima \cite{nakajima}, and later in the higher rank generalization due to Baranovsky \cite{B}, in which these authors construct an action of the Heisenberg algebra in the cohomology of moduli spaces of sheaves on surfaces. More recently, flags of sheaves also appeared in the work of Bruzzo et al. \cite{A01-01} in the context of a supersymmetric quantum mechanical model in string theory, and in the work of Chuang et al. \cite{CDDT} where a string theoretic derivation for the conjecture of Hausel, Letellier and Rodriguez-Villegas on the cohomology of character varieties with marked points is given.

Our motivation to provide, in the present paper, an ADHM construction of the moduli space of framed flags of sheaves on the projective plane. More precisely, we consider triples $(E,F,\varphi)$ consisting of a torsion free sheaf $F$ on $\p2$, a framing $\varphi$ of $F$ at a line $\ell_\infty$ and a subsheaf $E$ of $F$ such that the quotient $F/E$ is supported away from the framing line $\ell_\infty$. Letting $r:={\rm rk}(E)={\rm rk}(F)$, $n:=c_2(E)$, and $l:=h^0(F/E)$, we denote by $\calf(r,n,l)$ the moduli space of such triples with these invariants fixed. Building upon the techniques used in \cite[Section 3]{A01-01}, we prove that $\calf(r,n,1)$ is an irreducible, nonsingular quasi-projective variety of dimension $2rn+r+1$. Note also that $\calf(1,n,l)$ coincide with the nested Hilbert scheme ${\rm Hilb}^{n,n+l}(\C^2)$ of points in $\C^2$, so our main result can be regarded as a generalization to higher rank of known facts about ${\rm Hilb}^{n,n+l}(\C^2)$.

From a geometric point of view, we show that $\calf(r,n,1)$ admits the structure of a holomorphic pre-symplectic manifold, that is, $\calf(r,n,1)$ is a K\"ahler manifold equipped with a natural closed holomorphic 2-form $\Omega$. In the simplest possible case, namely $r=n=1$, we show that $\Omega$ is generically non-degenerate.

Our approach is to relate framed flags of sheaves with representations of the \emph{enhanced ADHM quiver}
\begin{equation}
\label{enhanced-ADHM-quiver}
\begin{tikzpicture}[->,>=stealth',shorten >=1pt,auto,node distance=2.5cm,
semithick]
\tikzstyle{every state}=[fill=white,draw=none,text=black]

\node[state]    (A)                {$e_1$};
\node[state]    (B) [left of=A]    {$e_2$};
\node[state]    (C) [right of=A]   {$e_{\infty}$};

\path (A) edge [loop above]          node {$\alpha$}     (A)
edge [loop below]          node {$\beta$}      (A)
edge [out=20,in=160]       node {$\eta$}       (C)
edge [out=-160,in=-20]     node {$\gamma$}     (B)
(B) edge [loop above]          node {$\alpha'$}    (B)
edge [loop below]          node {$\beta'$}     (B)
edge [out=20,in=160]       node {$\phi$}       (A)
(C) edge [out=-160,in=-20]     node {$\xi$}        (A);
\end{tikzpicture}
\end{equation}
with the relations
\begin{equation}
\begin{array}{rccc}
\label{eq:enhADHMrel02}
\alpha'\beta'-\beta'\alpha'& \alpha\beta - \beta\alpha + \xi\eta, & \alpha\phi-\phi\alpha', & \beta\phi-\phi\beta',\\
\eta\phi, & \begin{array}{lr}
\gamma\xi, & \phi\gamma, \end{array} & \gamma\alpha-\alpha'\gamma, & \gamma\beta - \beta'\gamma.
\end{array}
\end{equation}
Indeed, we show that the moduli space of stable representations of the enhanced ADHM quiver with dimension vector $(r,c,c')$ is isomorphic to $\calf(r,c-c',c')$. All properties of the former space are obtained by analyzing the moduli space of stable quiver representations.

The paper is outlined as follows. Section \ref{sec.flags} is dedicated to the functorial construction of the moduli space of framed flags of sheaves.  The enhanced ADHM quiver and its stable representations are introduced in Section \ref{e-adhm-q}, and the moduli space stable representations is constructed in Section \ref{section:moduli-space}. We prove that $\calf(r,n,1)$ is an irreducible, nonsingular quasi-projective variety of dimension $2rn+r+1$ in Sections \ref{smoothness} and \ref{adhm const} by first establishng the corresponding facts for the moduli space of stable representation of the enhanced ADHM quiver, and then showing that this is isomorphic to the moduli space of framed flags of sheaves. Finally, Section \ref{geometry} is dedicated to the study of the geometric structure of $\calf(r,n,1)$.


\section{Framed flags of sheaves on $\p2$} \label{sec.flags}

Fix a line $\ell_\infty\subset\p2$; recall that a \emph{framing} of a coherent sheaf $F$ on $\p2$ at the line $\ell_\infty$ is the choice of an isomorphism $\varphi:F|_{\ell\infty}\to{\mathcal O}_{\ell\infty}^{\oplus r}$, where $r$ is the rank of $F$.
A \emph{framed flag of sheaves} on $\p2$ is a triple $(E,F,\varphi)$ consisting of a torsion free sheaf $F$ on $\p2$, a framing $\varphi$ of $F$ at the line $\ell_\infty$, and a subsheaf $E$ of $F$ such that the quotient $F/E$ is supported away from the framing line $\ell_\infty$. Note that the existence of a framing forces $c_1(F)=0$, while the last condition implies that $c_1(E)=0$, and that $F/E$ must be a 0-dimensional sheaf. Thus the triple $(E,F,\varphi)$ has three numerical invariants: $r:={\rm rk}(E)={\rm rk}(F)$, $n:=c_2(F)$ and $l:=h^0(F/E)$; note that $c_2(E)=n+l$. 

We consider the moduli functor
$$ \FF_{(r,n,l)} ~:~ {\rm Sch}_{\C}^{\rm op} \longrightarrow {\rm Sets} $$
$$ \FF_{(r,n,l)}(S) =
\left\{ \textrm{isomorphism classes of quadruples} (F_S,\varphi_S,Q_S,g_S) \right\} $$
where
\begin{itemize}
\item[(i)] $F_S$ is a coherent sheaf on $\p2\times S$, flat over $S$ such that
$F_s:=F_S|_{\p2\times\{s\}}$ is a torsion free sheaf for every closed point $s\in S$ with
${\rm rk}(F_s)=r$, $c_1(F_s)=0$, and $c_2(F_s)=n$;

\item[(ii)] $\varphi_S:E_S|_{\ell_\infty\times S} \to {\mathcal O}_{\ell_\infty\times S}^{\oplus r}$
is an isomorphism of ${\mathcal O}_{\ell_\infty\times S}$-modules;

\item[(iii)] $Q_S$ is a coherent sheaf on $\p2\times S$, flat over $S$ supported away from
$\p2\times\ell_{\infty}$, and such that $h^0(Q_s)=l$ for every closed point $s\in S$, where
$Q_s:=Q_S|_{\p2\times\{s\}}$;

\item[(iv)] $g_S:F_S\to Q_S$ is a surjective morphism of ${\mathcal O}_{\p2\times S}$-modules.

\end{itemize}

Two quadruples $(F_S,\varphi_S,Q_S,g_S)$ and $(F_S',\varphi_S',Q_S',g_S')$ are isomorphic if there are isomorphisms of ${\mathcal O}_{\p2\times S}$-modules $\Theta_S:F_S\to F'_S$ and $\Gamma_S:Q_S\to Q'_S$ such that the following two diagrams commute
$$ \xymatrix{
F_S|_{\ell_\infty\times S} \ar[r]^{\varphi_S} \ar[d]^{\Theta_S|_{\ell_\infty\times S}} &
{\mathcal O}_{\ell_\infty\times S}^{\oplus r} 
& ~~ & ~~ & F_S \ar[r]^{g_S}\ar[d]^{\Theta_S} & Q_S \ar[d]^{\Gamma_S} \\
F_S'|_{\ell_\infty\times S} \ar[ru]^{\varphi'_S} & & ~~ & ~~ & F'_S \ar[r]^{g'_S} & Q_S
} $$

\begin{pps}
The functor $\FF_{(r,n,l)}$ is representable.
\end{pps}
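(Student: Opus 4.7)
The plan is to construct the representing scheme as an open subscheme of a relative Quot scheme over the moduli space of framed torsion-free sheaves on $\p2$. First, I would invoke the well-known representability of the functor whose $S$-points are pairs $(F_S,\varphi_S)$ satisfying only items (i) and (ii); this is a theorem of Huybrechts--Lehn (realized explicitly by Nakajima's ADHM construction), yielding a smooth quasi-projective fine moduli space $\calm(r,n)$ equipped with a universal framed sheaf $(\mathbf F,\boldsymbol\varphi)$ on $\p2\times\calm(r,n)$.

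Second, I would apply Grothendieck's theorem on relative Quot schemes to $\mathbf F$ over the projection $\p2\times\calm(r,n)\to\calm(r,n)$, taking the Hilbert polynomial to be the constant $l$, so that every quotient parameterized is automatically $0$-dimensional of length $l$. This produces a projective $\calm(r,n)$-scheme $\pi:\mathrm{Quot}\to\calm(r,n)$ carrying a universal surjection $\mathbf g:\pi^*\mathbf F\twoheadrightarrow\mathbf Q$ with $\mathbf Q$ flat over $\mathrm{Quot}$ of relative length $l$.

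Third, I would cut out an open subscheme $U\subset\mathrm{Quot}$ imposing condition (iii). Since $\mathbf Q$ is flat over $\mathrm{Quot}$ with $0$-dimensional fibres, its scheme-theoretic support is closed in $\p2\times\mathrm{Quot}$ and finite over $\mathrm{Quot}$, so the intersection of this support with $\ell_\infty\times\mathrm{Quot}$ is proper over $\mathrm{Quot}$ and therefore projects to a closed subset of $\mathrm{Quot}$; define $U$ as the complement of this closed image. Equivalently, $U$ is the non-vanishing locus of the coherent sheaf $\pi_*(\mathbf Q|_{\ell_\infty\times\mathrm{Quot}})$, which is open by semicontinuity.

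Finally, I would check the universal property: given $(F_S,\varphi_S,Q_S,g_S)\in\FF_{(r,n,l)}(S)$, the pair $(F_S,\varphi_S)$ determines a unique morphism $f:S\to\calm(r,n)$ with $(F_S,\varphi_S)\simeq f^*(\mathbf F,\boldsymbol\varphi)$, and the surjection $g_S$ then determines a unique lift $\tilde f:S\to\mathrm{Quot}$ by the universal property of the Quot scheme; the hypothesis that $Q_S$ is supported away from $\p2\times\ell_\infty$ is precisely the condition that $\tilde f$ factors through $U$. I expect the only nontrivial step to be the openness of the support condition, and even this is routine once one uses flatness together with the finiteness of the support of $\mathbf Q$ over $\mathrm{Quot}$; the rest of the argument is the formal concatenation of two known representability results.
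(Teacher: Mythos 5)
Your proposal follows essentially the same route as the paper: identify $\FF_{(r,n,l)}$ with (an open part of) the relative Quot functor of the universal framed sheaf over $\calm(r,n)$ and invoke Grothendieck's representability of Quot. In fact you are somewhat more careful than the paper on the one point it glosses over, namely that the condition ``${\rm supp}(Q_s)\cap\ell_\infty=\emptyset$'' cuts out an open subscheme of ${\rm Quot}^l(U)$, which you justify via finiteness of the support over the base.
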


The variety that represents functor $\FF_{(r,n,l)}$ is called the \emph{moduli space of framed flags of sheaves on $\p2$}; it will be denoted by $\calf(r,n,l)$.

Before going into the proof, let us remind a few facts about moduli spaces of framed sheaves, following \cite{BM,HL}. Consider the moduli functor
$$ \MM_{(r,n)} ~:~ {\rm Sch}_{\C}^{\rm op} \longrightarrow {\rm Sets} $$
$$ \FF_{(r,n)}(S) =
\left\{ \textrm{isomorphism classes of } (F_S,\varphi_S) \right\} $$
where
\begin{itemize}
\item[(i)] $F_S$ is a coherent sheaf on $\p2\times S$, flat over $S$ such that
$F_s:=F_S|_{\p2\times\{s\}}$ is a torsion free sheaf for every closed point $s\in S$ with
${\rm rk}(F_s)=r$, $c_1(F_s)=0$, and $c_2(F_s)=n$;

\item[(ii)] $\varphi_S:E_S|_{\ell_\infty\times S} \to {\mathcal O}_{\ell_\infty\times S}^{\oplus r}$
is an isomorphism of ${\mathcal O}_{\ell_\infty\times S}$-modules;
\end{itemize}
This functor is represented by a quasi-projective variety $\calm(r,n)$, which is the moduli space of framed torsion free sheaves on $\p2$. Let $(U,\epsilon)$ denote the universal framed sheaf on $\p2\times\calm(r,n)$, with
$\epsilon: U|_{\ell_\infty\times\calm(r,n)} \stackrel{\simeq}{\longrightarrow} \calo_{\ell_\infty\times\calm(r,n)}^{\oplus r}$ being an isomorphism of $\calo_{\ell_\infty\times\calm(r,n)}$-modules.

\begin{proof}
Consider the quot functor
$$ \QQ_{(U,l)} ~:~ {\rm Sch}_{\calm(r,n)}^{\rm op} \longrightarrow {\rm Sets} $$
$$ \QQ_{(U,l)}(S) =
\left\{ \textrm{isomorphism classes of } (Q_S,g_S) \right\} $$ 
where
\begin{itemize}
\item[(i)] $Q_S$ is a coherent sheaf on $\p2\times S$, flat over $S$, supported away from 
$\ell_\infty\times S$ and such that $h^0(Q_s)=l$ for every closed point $s\in S$, where
$Q_s:=Q_S|_{\p2\times\{s\}}$;

\item[(ii)] $q_S:({\mathbf 1}_{\p2}\times\pi)^*U \to Q_S$ is a surjective morphism of $\calo_{\p2\times S}$-modules, where ${\mathbf 1}_{\p2}$ is the identity map, and $\pi:S\to\calm(r,n)$;
\end{itemize}
By Gothendieck's general theory, $\QQ_{(U,l)}$ is representable. 

Next, we argue that the functors $\FF_{(r,n,l)}$ and $\QQ_{(U,l)}$ are isomorphic, implying that the former is also representable.

Indeed, there is a natural transformation $\FF_{(r,n,l)}\to \QQ_{(U,l)}$ defined by 
defined by $(F_S,\varphi_S,Q_S,g_S)\mapsto(Q_S,g_S)$, with inverse given by setting
$F_S = \ker(g|_S)$ and noting that, as consequence of the condition on the support of $Q_S$, the framing $\epsilon$ of the universal sheaf $U$ on $\ell_\infty\times\calm(r,n)$ induces a framing
$\varphi_S$ on $F_S$.
\end{proof}

It follows from the proof that the moduli space of framed flags of sheaves actually coincides with the quot scheme ${\rm Quot}^l(U)$ relative to $\calm(r,n)$. This observation provides a forgetful morphism of schemes $\mathfrak{p}:\calf(r,n,l)\to\calm(r,n)$, given by $(E,F,\varphi)\mapsto(F,\varphi)$. 

\begin{lem}\label{lem2}
The morphism $\mathfrak{p}$ is surjective, and its fibre over $(F,\varphi)$ is the open subset of the quot scheme ${\rm Quot}^l(F)$ consisting of those sheaves supported away from the framing line $\ell_\infty\subset\p2$. 
\end{lem}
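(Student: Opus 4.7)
The plan is to leverage the identification $\calf(r,n,l)\cong\mathrm{Quot}^{l}(U)$ established immediately above, where $U$ is the universal framed sheaf on $\p2\times\calm(r,n)$. Under this identification $\mathfrak{p}$ becomes the structural morphism of the relative quot scheme, so both claims of the lemma follow from the functorial description together with the support condition built into $\QQ_{(U,l)}$.

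For the fibre, I would base-change along the point inclusion $\iota:\mathrm{Spec}\,\C\to\calm(r,n)$ corresponding to $(F,\varphi)$. Since $(\mathbf{1}_{\p2}\times\iota)^{*}U\cong F$ and the formation of relative quot schemes commutes with base change, $\mathfrak{p}^{-1}(F,\varphi)$ represents the functor of length-$l$ quotients $g:F\twoheadrightarrow Q$ with $Q$ supported away from $\ell_\infty$. This is an open subfunctor of the usual $\mathrm{Quot}^{l}(F)$: writing $\mathcal{Q}$ for the universal quotient on $\p2\times\mathrm{Quot}^{l}(F)$, the locus where $\mathrm{supp}(\mathcal{Q})$ meets $\ell_\infty$ is the image in $\mathrm{Quot}^{l}(F)$ of the closed set $\mathrm{supp}(\mathcal{Q})\cap(\ell_\infty\times\mathrm{Quot}^{l}(F))$, and this image is closed since $\ell_\infty\cong\p1$ is proper. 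Its complement is the desired open subset.

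For surjectivity, it suffices to exhibit one triple in the fibre over each $(F,\varphi)\in\calm(r,n)$. Since $F$ is torsion free on the smooth surface $\p2$, its non locally free locus has codimension at least two and is therefore a finite set of points; in particular $F$ is locally free on a Zariski-dense open subset meeting the affine complement $\C^{2}=\p2\setminus\ell_\infty$. Choose $l$ distinct points $p_{1},\ldots,p_{l}$ in this intersection. At each $p_{i}$ the stalk $F_{p_{i}}\cong\calo_{\p2,p_{i}}^{\oplus r}$ surjects onto the residue field $k(p_{i})$, and these assemble into a surjection $g:F\twoheadrightarrow\bigoplus_{i=1}^{l}k(p_{i})=:Q$ with $Q$ a length-$l$ sheaf supported in $\C^{2}$. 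Then $(\ker g,F,\varphi)$ is a framed flag of the required type lying above $(F,\varphi)$.

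No serious obstacle is anticipated: the content is essentially a base change for the relative quot scheme plus the elementary observation that a torsion free sheaf of positive rank on a smooth surface always admits prescribed length-$l$ zero-dimensional quotients at smooth points in any chosen open set.
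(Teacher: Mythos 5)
Your proposal is correct and follows essentially the same route as the paper: the fibre description is read off from the scheme-theoretic identification $\calf(r,n,l)\cong{\rm Quot}^l(U)$ over $\calm(r,n)$, and surjectivity is shown by surjecting $F$ onto skyscraper sheaves at $l$ distinct points of the locally free locus and taking the kernel. You are in fact slightly more careful than the paper on two points that it leaves implicit: you justify the openness of the ``support disjoint from $\ell_\infty$'' condition via properness of $\ell_\infty$, and you correctly require the chosen points to avoid $\ell_\infty$ as well as ${\rm Sing}(F)$.
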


\begin{proof}
Take a framed sheaf $(F,\varphi)\in\calm(r,n)$, and let $\Sigma:={\rm Sing}(F)$ be its singular locus. Choose $l$ distinct points $x_1,\dots,x_l\in\p2\setminus\Sigma$, and let
$Q:=\bigoplus_{i=1}^l\calo_{x_i/\p2}$. Then there exist epimorphisms $\alpha:F\twoheadrightarrow Q$; choose one such epimorphism and let $E:=\ker\alpha$. The triple $(E,F,\varphi)$ defines a point in the fibre $\mathfrak{p}^{-1}(F,\varphi)$. The second claim is an immediate consequence of the isomorphism between ${\rm Quot}^l(U)$ and $\calf(r,n,l)$ as schemes over $\calm(r,n)$.
\end{proof}

There is, of course, another forgetful morphism $\calf(r,n,l)\to\calm(r,n+l)$ given by $(E,F,\varphi)\mapsto(E,\varphi)$. This, however, is not dominant for $l\ge 2$, since the sheaf $E$ is never locally free. Remark also that $\calf(r,n,l)$ lies within the product $\calm(r,n)\times\calm(r,n+l)$ as an incidence variety.


\section{Quiver representations and stability conditions} \label{e-adhm-q}

In order to further understand the geometrical properties of the moduli spaces $\calf(r,n,l)$, we shift our attention to representations of the ADHM quiver.

A \textit{representation of the enhanced ADHM quiver of type $(r,c,c')$} in the category of complex vector spaces is given by the set $X=(W,V,V',A,B,I,J,A',B',F,G)$, where $W$, $V$, $V'$ are vector spaces of complex dimension $r$, $c$ and $c'$, respectively, and $A,$ $B\in End(V)$, $I\in Hom(W,V)$, $J\in Hom(V,W)$, $A'$, $B'\in End(V')$, $F\in Hom(V',V)$ and $G\in Hom(V,V')$ that satisfy the following equations called \textit{enhanced ADHM equations}

\begin{equation}
\label{eq:enhADHMequations}
\begin{array}{rcccc}
[A,B]+IJ=0, & [A',B']=0, & AF-FA'=0, & BF-FB'=0, & JF=0,\\
GI=0, & FG=0,& GA-A'G=0, & GB-B'G=0. &
\end{array}
\end{equation}
A representation $X = (W,V,V',A,B,I,J,A',B',F,G)$ can be illustrated as the diagram below
\begin{equation*}
\begin{tikzpicture}[->,>=stealth',shorten >=1pt,auto,node distance=2.5cm,semithick]
\tikzstyle{every state}=[fill=white,draw=none,text=black]

\node[state]    (A)                {$V$};
\node[state]    (B) [left of=A]    {$V'$};
\node[state]    (C) [right of=A]   {$W$.};

\path (A) edge [loop above]          node {$A$}        (A)
edge [loop below]          node {$B$}        (A)
edge [out=20,in=160]       node {$J$}        (C)
edge [out=-160,in=-20]     node {$G$}        (B)
(B) edge [loop above]          node {$A'$}       (B)
edge [loop below]          node {$B'$}       (B)
edge [out=20,in=160]       node {$F$}        (A)
(C) edge [out=-160,in=-20]     node {$I$}        (A);
\end{tikzpicture}
\end{equation*}

Let $\varphi: W \longrightarrow \mathbb{C}^r$ be an isomorphism. Then, if $X$ is a representation of the enhanced ADHM, $(X,\varphi)$ is called a \textit{framed representation} of the enhanced ADHM quiver \label{framed-representation}. Two framed representations $(X,\varphi)$ and $(\widetilde{X},\widetilde{\varphi})$ are said to be isomorphic if there exists an isomorphism
\begin{equation*}
(\xi_1,\xi_2,\xi_{\infty}): X \longrightarrow \widetilde{X},
\end{equation*}
such that $\widetilde{\varphi}\xi_{\infty}=\varphi$. \\

In order to construct the moduli space, first we need to introduce a stability condition. In this section we will define two stability conditions and we will prove that these conditions are equivalent in a suitable chamber. The first one, called $\Theta$-stability condition, was inspired in the stability condition presented by King in 1994 in \cite{A01-01-20}. The $\Theta$-stability condition is a good one because there exists techniques using Geometric Invariant Theory to construct the moduli space of $\Theta$-stable framed representations of quivers, as we prove in the Section \ref{section:moduli-space}.\\
\indent While the second stability condition is more resembling with the stability condition usually defined for representations of the ADHM quiver. This resemblance plays an important role to prove that the moduli space stable framed representation of the enhanced ADHM quiver of numerical type $(r,c,c')$, where $c'\geq 2$, is not smooth. 

\begin{dfn}
	\index{Representation of the enhanced ADHM quiver!$\Theta-$stable}\index{Representation of the enhanced ADHM quiver!$\Theta-$semistable}
	Let $\Theta=(\theta,\theta',\theta_{\infty})\in \mathbb{Q}^3$ a triple satisfying the relation
	\begin{equation}
	\label{eq:stability-parameter}
	c\theta + c'\theta' + r\theta_{\infty} = 0.
	\end{equation}
	A representation $X$ of numerical type $(r,c,c')$ is called \textit{$\Theta-$stable} if $X$ satisfies the following conditions:
	\begin{itemize}
		\item[$(i)$] Any subrepresentation $0\neq\widetilde{X}\subset X$ of numerical type $(0,\widetilde{c},\widetilde{c'})$ satisfies
		\begin{equation}
		\label{eq:thetastab01}
		\theta\widetilde{c}+\theta'\widetilde{c'}< 0;
		\end{equation}
		\item[$(ii)$] Any subrepresentation $0\neq\widetilde{X}\subset X$ of numerical type $(\widetilde{r},\widetilde{c},\widetilde{c'})$ satisfies
		\begin{equation}
		\label{eq:thetastab02}
		\theta_{\infty}\widetilde{r}+\theta\widetilde{c}+\theta'\widetilde{c'}< 0.
		\end{equation}
	\end{itemize}
	A representation $X$ of numerical type $(r,c,c')$ is called \textit{$\Theta-$semistable} if $X$ satisfies
	\begin{itemize}
		\item[$(iii)$] Any subrepresentation $0\neq\widetilde{X}\subset X$ of numerical type $(0,\widetilde{c},\widetilde{c'})$ satisfies
		\begin{equation}
		\label{eq:thetastab03}
		\theta\widetilde{c}+\theta'\widetilde{c'}\leq 0;
		\end{equation}
		\item[$(iv)$] Any subrepresentation $0\neq\widetilde{X}\subset X$ of numerical type $(\widetilde{r},\widetilde{c},\widetilde{c'})$ satisfies
		\begin{equation}
		\label{eq:thetastab04}
		\theta_{\infty}\widetilde{r}+\theta\widetilde{c}+\theta'\widetilde{c'}\leq 0.
		\end{equation}
	\end{itemize}\end{dfn}
	
	Note that this stability condition is slightly different from the notion defined by King, since here only subrepresentations of numerical type $(0,c,c')$ and $(r,c,c')$ was considered. However, this notion defined in \cite{A01-01} is enough to construct the moduli space of $\Theta$-stable framed representations of the enhanced ADHM quiver, as we will see in Section \ref{section:moduli-space}. Let $(r,c,c')$ be a fixed dimension vector. Then the space of stability parameters $\Theta=(\theta,\theta',\theta_{\infty})\in \mathbb{Q}^3$ satisfying \eqref{eq:stability-parameter} can be identified  with the $(\theta,\theta')-$plane in $\mathbb{Q}^2$, after solving for $\theta_{\infty}$. If the set of the representations $X$ with numerical type $(r,c,c')$ which is strictly $\Theta-$semistable is nonempty, the parameter $\Theta$ is called  \textit{critical of type} $(r,c,c')$. Otherwise, if this set is empty, the parameter $\Theta$ is called \textit{generic}. The following lemma establishes the existence of generic stability parameters for any given dimension vector $(r,c,c')$. Moreover, this lemma is analogous to \cite[Lemma 3.1]{A01-01} and a proof is written only for completeness.
	\begin{lem}
		\label{lem:thetastab-equiv-stab}
		Fix a triple $(r,c,c')\in(\mathbb{Z}_{>0})^3$. Suppose $\theta'>0$ and $\theta+c'\theta'<0$. Let $X=(W,V,V',A,B,I,J,A',B',F,G)$ be a representation of numerical type $(r,c,c')$. Then the following are equivalent:\begin{itemize}
			\item[$($i$)$] $X$ is $\Theta-$stable;
			\item[$($ii$)$] $X$ is $\Theta-$semistable;
			\item[$($iii$)$] $X$ satisfies the following conditions:
			\begin{enumerate}
				\item[$(S.1)$] $F\in Hom(V',V)$ is injective;
				\item[$(S.2)$] The ADHM data $\mathcal{A}=(W,V,A,B,I,J)$ is stable, i.e., there is no proper subspace $0\subset S\subsetneq V$ preserved by $A$, $B$ and containing the image of $I$.
			\end{enumerate}
		\end{itemize}
	\end{lem}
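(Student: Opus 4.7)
The plan is to prove $(i)\Rightarrow(ii)$ trivially (strict implies non-strict), and then close the loop via $(ii)\Rightarrow(iii)\Rightarrow(i)$. Throughout, the chamber hypotheses $\theta'>0$ and $\theta+c'\theta'<0$ together yield $\theta<0$ and $|\theta|>c'\theta'$, which will be used repeatedly.

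For $(ii)\Rightarrow(iii)$, suppose $X$ is $\Theta$-semistable. If $K:=\ker F\neq 0$, the relations $AF=FA'$ and $BF=FB'$ make $K$ invariant under $A',B'$, so $(0,0,K)$ is a nonzero subrepresentation of numerical type $(0,0,\dim K)$. Semistability forces $\theta'\dim K\leq 0$, contradicting $\theta'>0$; hence $F$ is injective. Next, if there is a proper $A,B$-invariant subspace $0\subset S\subsetneq V$ with $\im(I)\subset S$, set $\widetilde{V}':=F^{-1}(S)$. The same commutation relations make $\widetilde{V}'$ invariant under $A',B'$, and the relation $FG=0$ gives $G(S)\subseteq\ker F\subseteq\widetilde{V}'$; the other subrepresentation conditions for $(W,S,\widetilde{V}')$ are immediate. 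Semistability applied to this subrepresentation of type $(r,s,f)$, combined with $r\theta_\infty=-(c\theta+c'\theta')$, rearranges to
\[
|\theta|(c-s)\leq \theta'(c'-f)\leq c'\theta'.
\]
Since $c-s\geq 1$ this yields $|\theta|\leq c'\theta'$, contradicting the chamber hypothesis.

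For $(iii)\Rightarrow(i)$, assume $F$ is injective and $(W,V,A,B,I,J)$ is ADHM-stable; let $0\neq\widetilde{X}=(\widetilde{W},\widetilde{V},\widetilde{V}')\subsetneq X$. If $\widetilde{W}=0$ (type $(0,\tilde{c},\tilde{c}')$) and $\tilde{c}=0$, then $F(\widetilde{V}')\subseteq\widetilde{V}=0$ forces $\widetilde{V}'=0$ by injectivity of $F$, contradicting $\widetilde{X}\neq 0$; so $\tilde{c}\geq 1$ and
\[
\theta\tilde{c}+\theta'\tilde{c}'\leq \theta+c'\theta'<0.
\]
If $\widetilde{W}=W$ (type $(r,\tilde{c},\tilde{c}')$), then $\widetilde{V}$ is $A,B$-invariant and contains $\im(I)$, so ADHM stability forces $\widetilde{V}=V$, i.e., $\tilde{c}=c$; properness of $\widetilde{X}$ then forces $\tilde{c}'<c'$, and
\[
\theta_\infty r+c\theta+\tilde{c}'\theta'=(\tilde{c}'-c')\theta'<0.
\]

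The main technical point is the construction of the subrepresentation $(W,S,F^{-1}(S))$ in the semistability-to-stability step: one must simultaneously exploit the intertwining of $A,B$ with $F$ and the vanishing $FG=0$ to verify that this is a bona fide subrepresentation of the enhanced ADHM quiver. Once this subrepresentation is identified, the proof reduces to the numerical inequalities recorded above, each of which follows directly from the chamber hypotheses together with $r\theta_\infty=-(c\theta+c'\theta')$.
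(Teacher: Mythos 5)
Your proof is correct and follows the same overall strategy as the paper: the trivial implication $(i)\Rightarrow(ii)$, then $(ii)\Rightarrow(iii)$ by exhibiting destabilizing subrepresentations, then $(iii)\Rightarrow(i)$ by splitting into the cases $\widetilde{W}=0$ and $\widetilde{W}=W$. The one genuine difference is in the step where $(S.2)$ is assumed to fail: the paper takes the subrepresentation $(W,S,V')$ with the primed maps set to zero, which gives the cleaner inequality $(\dim S-c)\theta>0$ but glosses over the fact that $(W,S,V')$ is only a subrepresentation if $F(V')\subseteq S$, which is not guaranteed at that point. Your choice $\widetilde{V}'=F^{-1}(S)$ avoids this: invariance under $A',B'$ follows from the intertwining relations, $G(S)\subseteq\ker F\subseteq F^{-1}(S)$ follows from $FG=0$, and the resulting chain $|\theta|(c-s)\leq\theta'(c'-f)\leq c'\theta'$ still contradicts $\theta+c'\theta'<0$. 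So your argument is slightly longer numerically but is the more watertight version of the same idea. A further minor point in your favour: in the case $\widetilde{W}=0$ of $(iii)\Rightarrow(i)$ you correctly observe that only $\tilde{c}\geq 1$ is needed for the final inequality, whereas the paper detours through an unnecessary (and not quite justified) claim that $\tilde{c}=c$.
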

	\begin{proof}If $X$ is $\Theta-$stable, then $X$ is clearly $\Theta-$semistable.\\
		\indent Suppose that $X$ is $\Theta-$semistable and $F$ is not injective. Then
		\begin{align}
		A'(\ker(F))\subset \ker(F) \nonumber\\
		B'(\ker(F))\subset \ker(F) \nonumber 
		\end{align}
		In fact, let $v\in\ker(F)$. Then it follows from the enhanced ADHM equations that
		\begin{align}
		0 & = (BF-FB')v \nonumber\\
		& \Rightarrow F(B'v) = 0\mbox{, for all } v\in\ker(F) \nonumber \\
		& \Rightarrow B'(v)\in\ker(F) \mbox{, for all }v\in\ker(F) \nonumber\\
		& \Rightarrow B'(\ker(F))\subset \ker(F) \nonumber
		\end{align}
		Analogously the same can be proved for the endomorphism $A'$.\\
		\indent Then, $\widetilde{X} = (0,0,\ker(F),0,0,0,0,A'|_{\ker(F)},B'|_{\ker(F)},F|_{\ker(F)}, 0)$ is a subrepresentation of $X$ with numerical type $(\widetilde{r},\widetilde{c},\widetilde{c'})$ in which $\widetilde{r}=\widetilde{c}=0$ and $\widetilde{c'}=\dim(\ker(F))$. However,
		\begin{align*}
		\widetilde{c}\theta+\widetilde{c'}\theta' = \theta'\cdot\dim(\ker(F)) > 0
		\end{align*}
		and this contradicts the inequation \eqref{eq:thetastab03}.\\
		\indent Now suppose that $X$ is $\Theta-$semistable and the condition $(S.2)$ is false. Then, there is a proper subspace $0\subset S \subsetneq V$ such that
		\begin{equation*}
		A(S), B(S), Im(I)\subseteq S.
		\end{equation*}
		Therefore, $\widetilde{X}=(W,S,V',A|_S,B|_S,I,J|_S,0,0,0,0)$ is a subrepresentation with numerical type $(r,\dim(S),c')$. However, it follows from the equation \eqref{eq:stability-parameter} and from conditions $\theta'>0$, $\theta + c'\theta'<0$ that 
		$$
		\dim(S)\theta + c'\theta'+\theta_{\infty}r> 0.$$
		Indeed, 
		\begin{align}
		\theta'>0,~\theta + c'\theta'<0 \Rightarrow \theta<0, \nonumber
		\end{align}
		moreover,
		\begin{align}
		\dim(S)\theta + c'\theta'+\theta_{\infty}r &= (\dim(S)-c)\theta >0
		\end{align} 
		and this contradicts the inequality \eqref{eq:thetastab04}. Thus, if $X$ is $\Theta-$semistable, then $X$ satisfies the conditions $(S.1)$ and $(S.2)$.\\
		\indent Now suppose that $X$ satisfies the conditions $(S.1)$ and $(S.2)$, thus $X$ is $\Theta-$stable. Indeed, let $\widetilde{X}=(\widetilde{W},\widetilde{V},\widetilde{V'},\widetilde{A},\widetilde{B},\widetilde{I},\widetilde{J},\widetilde{A'},\widetilde{B'},\widetilde{F},\widetilde{G})$ be a subrepresentation of $X$ with numerical type $(\widetilde{r},\widetilde{c},\widetilde{c'})$. There are two cases to study: $\widetilde{r}=r$ and $\widetilde{r}=0$.\\
\indent		First suppose $\widetilde{r}=r$. Since $\widetilde{W}$ is a subspace of $W$, by definition, and $\widetilde{r}=r$, $\widetilde{W}=W$. It follows from condition $(S.2)$ that $I\neq 0$. Indeed, otherwise, $0\subset V$ would satisfy $A(0), B(0), Im(0) = 0$ and then $\mathcal{A}=(W,V,A,B,I,J)$ is not stable. Since $\widetilde{X}$ is a subrepresentation of $X$, the diagram below commutes:
		\begin{equation*}
		\begin{tikzcd}
		W \arrow{r}{I}                            & V \\
		W \arrow{u}{1_{W}} \arrow{r}{\widetilde{I}} & \widetilde{V} \arrow[hook]{u}{\iota}
		\end{tikzcd}
		\end{equation*}
		i.e.,
		\begin{align}
		\label{eq:subrep-theta-stab}
		\iota\circ \widetilde{I}= I\circ 1_{W} .
		\end{align}
		Thus, if $\widetilde{c}=0$,  $I\equiv 0$, which is a contradiction. Therefore $\widetilde{c}>0$. \\
		\indent If $\widetilde{c}<c$, then $0\subset \widetilde{V} \subsetneq V$ is a proper subspace such that 
		$$
		A(\widetilde{V}), B(\widetilde{V}), Im(I) \subset \widetilde{V}.
		$$
		Indeed, since $\widetilde{X}$ is a subrepresentation of $X$, the following diagram commutes:
		\begin{equation*}
		\begin{tikzcd}
		V \arrow{r}{A}                            & V \\
		\widetilde{V} \arrow[hook]{u}{\iota} \arrow{r}{\widetilde{A}} & \widetilde{V} \arrow[hook]{u}{\iota}
		\end{tikzcd}
		\end{equation*}
		thus, 
		\begin{equation*}
		\iota\circ\widetilde{A}=A\circ \iota \Rightarrow A|_{\widetilde{V}} \subset \widetilde{V}.
		\end{equation*}
		Analogously, $B$ preserves $\widetilde{V}$. Moreover, it follows from \eqref{eq:subrep-theta-stab} that $Im(I)\subset \widetilde{V}$, and this contradicts the condition $(S.2)$. Therefore, $\widetilde{c}=c$. Since $\widetilde{X}$ is a proper subrepresentation, $\widetilde{c'}<c'$ and then
		\begin{align}
		\theta\widetilde{c} + \theta'\widetilde{c'} + \theta_{\infty}\widetilde{r} &= \theta c + \theta'\widetilde{c'} + \theta_{\infty}r -(\theta c + \theta'c'+\theta_{\infty}r) \nonumber \\
		&= \theta'(\widetilde{c'}-c') <0 \nonumber
		\end{align}
		
		Now suppose $\widetilde{r}=0$. If $\widetilde{c}=0$, since $F$ is injective, $\widetilde{V'}\subset\ker(F)=0$. Thus $\widetilde{V'}=0$. However, only nontrivial subrepresentations are being considered. Then $\widetilde{c}>0$. If $\widetilde{c}<c$, again $0\subset\widetilde{V}\subsetneq V$ contradicts the condition $(S.2)$ and $\widetilde{c}=c$. Since $\widetilde{X}$ is a proper subrepresentation, $\widetilde{c'}<c$. Thus
		\begin{align*}
		\widetilde{c}\theta + \widetilde{c'}\theta' &\leq \theta+c'\theta' < 0
		\end{align*}
		Therefore, $X$ is $\Theta-$stable.
	\end{proof}
	The following corollary is trivial.
	\begin{cor}
		\label{corolario-lema222}
		Let $X=(A,B,I,J,A',B',F,G)$ be a stable representation of the enhanced ADHM quiver. Then $G\equiv 0$.
	\end{cor}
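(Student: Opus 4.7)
The corollary is essentially immediate from the previous lemma together with one of the enhanced ADHM relations, so the proof plan is short.

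My plan is as follows. Since $X$ is stable, Lemma \ref{lem:thetastab-equiv-stab} (taking the stability parameter $\Theta$ in the chamber $\theta'>0$, $\theta+c'\theta'<0$) tells us that condition $(S.1)$ holds, namely $F\in\mathrm{Hom}(V',V)$ is injective. I would then invoke the enhanced ADHM relation $FG=0$ from the list \eqref{eq:enhADHMequations}: this says that $\mathrm{im}(G)\subseteq\ker(F)$. Since $F$ is injective, $\ker(F)=0$, and therefore $G\equiv 0$.

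There is really no obstacle here; the content of the statement is that the relation $FG=0$ is redundant on the stable locus because one of its factors is already forced to be injective by the stability condition. Note that this is precisely why this corollary is called trivial in the text: it is a one-line consequence of $(S.1)$ and the relation $FG=0$.
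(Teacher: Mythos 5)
Your proposal is correct and is essentially identical to the paper's own proof: both deduce injectivity of $F$ from condition $(S.1)$ of Lemma \ref{lem:thetastab-equiv-stab} and then conclude $G=0$ from the relation $FG=0$ in \eqref{eq:enhADHMequations}. Nothing is missing.
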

	\begin{proof}
		Note that $F$ is injective, see condition $(S.1)$, and $FG=0$, see enhanced ADHM equations \eqref{eq:enhADHMequations}. Then $G=0$.
	\end{proof}
	\indent Due to the Lemma \ref{lem:thetastab-equiv-stab}, from now on a representation $X$ of the enhanced ADHM quiver will be called \textit{stable} \index{Representation of the enhanced ADHM quiver!stable} if $X$ satisfies $(S.1)$ and $(S.2)$. Due to Corollary \ref{corolario-lema222}, framed stable representations are quite simpler and easier to manipulate.\\
	\indent In this work almost always the representations of the enhanced ADHM quiver considered are stable. Thus, sometimes it will be considered the following quiver as the \textit{enhanced ADHM quiver}\index{Enhanced ADHM quiver}
	\begin{equation*}\label{eq:quiverADHMaument-est}
	\begin{tikzpicture}[->,>=stealth',shorten >=1pt,auto,node distance=2.5cm,
	semithick]
	\tikzstyle{every state}=[fill=white,draw=none,text=black]
	
	\node[state]    (A)                {$e_1$};
	\node[state]    (B) [left of=A]    {$e_2$};
	\node[state]    (C) [right of=A]   {$e_{\infty}$};
	
	\path (A) edge [loop above]          node {$\alpha$}     (A)
	edge [loop below]          node {$\beta$}      (A)
	edge [out=20,in=160]       node {$\eta$}       (C)
	(B) edge [loop above]          node {$\alpha'$}    (B)
	edge [loop below]          node {$\beta'$}     (B)
	edge [out=0,in=180]       node {$\phi$}       (A)
	(C) edge [out=-160,in=-20]     node {$\xi$}        (A);
	\end{tikzpicture}
	\end{equation*}
	with ideal generated by relations
	\begin{equation*}
	\begin{array}{ccccc}
	\label{eq:quiverADHMenh}
	\alpha\beta - \beta\alpha + \xi\eta, & \alpha\phi -\phi\alpha', & \beta\phi-\phi\beta', &\eta\phi, & \alpha'\beta'-\beta'\alpha'.
	\end{array}
	\end{equation*}
	Then a representation of the quiver above is given by $X=(A,B,I,J,A',B',F)$ such that $A,$ $B\in End(V)$, $I\in Hom(W,V)$, $J\in Hom(V,W)$, $A'$, $B'\in End(V')$ and $F\in Hom(V',V)$, see the diagram below,
	\begin{equation*}
	\begin{tikzpicture}[->,>=stealth',shorten >=1pt,auto,node distance=2.5cm,
	semithick]
	\tikzstyle{every state}=[fill=white,draw=none,text=black]
	
	\node[state]    (A)                {$V$};
	\node[state]    (B) [left of=A]    {$V'$};
	\node[state]    (C) [right of=A]   {$W$};
	
	\path (A) edge [loop above]     node {$A$}     (A)
	edge [loop below]     node {$B$}     (A)
	edge [out=20,in=160]      node {$J$}     (C)
	(B) edge [loop above]     node {$A'$}    (B)
	edge [loop below]     node {$B'$}    (B)
	edge [out=0, in=180]      node {$F$}     (A)
	(C) edge [out=-160,in=-20]      node {$I$}     (A);
	\end{tikzpicture}
	\end{equation*}
	satisfying the equations
	\begin{equation}
	\begin{array}{ccc}	\label{eq:repquiverADHMenh}
	~~ [A,B]+IJ = 0, & JF=0, & ~~ \\ 
	~~ [A',B']=0, & AF-FA'=0, & BF-FB'=0,
	\end{array}
	\end{equation}
	which will be also called \textit{enhanced ADHM equations} in this work.


\section{Construction of the moduli space} \label{section:moduli-space}

In this section we present the construction of the moduli spaces of framed $\Theta-$semistable representation of the enhanced ADHM quiver. In order to do this, we will use Geometric Invariant Theory techniques by analogy of \cite{A01-01-20} and \cite[Section 3.2]{A01-01}. This construction is presented in details just for completeness.\\ 
	\indent Let $V$, $V'$, $W$ be complex vector spaces such that $\dim V = c$, $\dim V' = c'$ and $\dim W = r$. We define the \index{Enhanced ADHM!data} space of \textit{enhanced ADHM data}, denoted by $\mathbb{X}$ or $\mathbb{X}(r,c,c')$, as the following complex vector space
	\begin{align}
	\mathbb{X} =& End(V)^{\oplus^2}\oplus Hom(W,V) \oplus Hom (V,W)\oplus\nonumber\\
	            & \oplus End(V')^{\oplus^2}\oplus Hom(V',V)\oplus Hom(V,V')\nonumber.
	\end{align}
	A vector $X\in \mathbb{X}$, $X= (A,B,I,J,A',B',F,G)$, is called \index{Enhanced ADHM!datum} \textit{enhanced ADHM datum}. Let $$\mathcal{G}=GL(V)\times GL(V').$$ Consider the map
	\begin{equation}
	\begin{array}{cccl}
	\label{eq:acaolivre}
	\Psi: &\mathcal{G}\times \mathbb{X} & \longrightarrow & \mathbb{X}\\
	& (h,h',X)                     & \longmapsto     & (hAh^{-1}, hBh^{-1}, hI, Jh^{-1}, h'A'h'^{-1},\\
	&&& h'B'h'^{-1}, hFh'^{-1},h'Gh^{-1}).
	\end{array}
	\end{equation} 
	
	It is easy to check that the map $\Psi$ defines a $\mathcal{G}-$action on $\mathbb{X}$.
	\begin{pps}
		\label{prop:acaolivre}
		The $\mathcal{G}-$action (\ref{eq:acaolivre}) is free on stable points of the enhanced ADHM data $\mathbb{X}$.
	\end{pps}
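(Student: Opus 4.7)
The plan is to fix a stable point $X=(A,B,I,J,A',B',F,G)\in\mathbb{X}$ and an element $(h,h')\in\mathcal{G}$ with $\Psi(h,h',X)=X$, and then show directly that $h=1_V$ and $h'=1_{V'}$. Unpacking the definition of $\Psi$, the hypothesis $\Psi(h,h',X)=X$ translates into the relations
\begin{equation*}
hA=Ah,\quad hB=Bh,\quad hI=I,\quad Jh=J,\quad h'A'=A'h',\quad h'B'=B'h',\quad hF=Fh',\quad h'G=Gh.
\end{equation*}
These are the only data I expect to need.

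The first step is to show $h=1_V$. Let $S:=\ker(h-1_V)\subset V$. From $hI=I$ I get $(h-1_V)I=0$, so $\mathrm{Im}(I)\subset S$. From $hA=Ah$ it follows that $A$ commutes with $h-1_V$, hence $A$ preserves $S$; likewise $B$ preserves $S$. Thus $S$ is an $A$- and $B$-invariant subspace of $V$ containing $\mathrm{Im}(I)$. By the stability condition $(S.2)$ for the underlying ADHM datum $(W,V,A,B,I,J)$, the only such subspace is $V$ itself, so $S=V$ and $h=1_V$.

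With $h=1_V$ in hand, the relation $hF=Fh'$ collapses to $F=Fh'$, i.e., $F(h'-1_{V'})=0$. Since $F\in\mathrm{Hom}(V',V)$ is injective by condition $(S.1)$, I conclude $h'=1_{V'}$. Thus the stabilizer of $X$ in $\mathcal{G}$ is trivial, which is exactly the freeness claim. I do not expect any real obstacle: the proof is a routine stabilizer computation, with the two stability conditions $(S.1)$ and $(S.2)$ being used precisely once each, in the natural order (first $(S.2)$ to trivialize $h$, then $(S.1)$ to trivialize $h'$); the relations involving $J$, $A'$, $B'$, $G$ are not needed.
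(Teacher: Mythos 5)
Your argument is correct and is essentially identical to the paper's own proof: both trivialize $h$ by applying the ADHM stability condition $(S.2)$ to $S=\ker(h-1_V)$, and then trivialize $h'$ from $hF=Fh'$ using the injectivity of $F$ from $(S.1)$.
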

	\begin{proof}
		Suppose that there exists $(h,h')\in \mathcal{G}$ such that
		\begin{align}
		(h,h')\cdot X = X, \text{ for all } X\in \mathbb{X}\nonumber.
		\end{align}
		Then, the following equations are satisfied
		\begin{align}
		\label{eq:acaolivre01}hAh^{-1}  & = A, \quad hA = Ah;\\
		\label{eq:acaolivre02}hBh^{-1}  & = B, \quad hB = Bh;\\
		\label{eq:acaolivre03}hI        & = I, \quad (h-1_{V}) I = 0;\\
		hA'h^{-1} & = A', \quad hA' = A'h;\nonumber\\
		hB'h^{-1} & = B',\quad hB' = B'h;\nonumber\\
		\label{eq:acaolivre06}hFh'^{-1}  & = F 
		\end{align}
		Let $S:=\ker(h-1_{V})$. It follows from the equation (\ref{eq:acaolivre03}) that $Im(I)\subset S$.Furthermore, let $v\in S$. Then, 
		\begin{align}
		\label{eq:acaolivre07}
		(h-1_{V})v & = 0 \quad \Rightarrow \quad hv = v.
		\end{align}
		Therefore, it follows from the equations (\ref{eq:acaolivre01}), (\ref{eq:acaolivre02}) and (\ref{eq:acaolivre07}) that $A(S), B(S)\subset S$.
		It follows from the stability condition of the ADHM datum $(A,B,I,J)$ that $S = V$. Then, 
		\begin{equation}
		\label{eq:acaolivre08}
		h=1_V.
		\end{equation} We claim that $h'=1_{V^{\prime}}$. Indeed, it follows from the equations (\ref{eq:acaolivre06}) and (\ref{eq:acaolivre08}) that $F(1_{V^{\prime}}-h'^{-1})=0$. Since $F$ is injective, it follows that $(1_{V^{\prime}}-h'^{-1}) = 0$. Therefore, $h'=1_{V'}$ and $(h,h') = (1_V,1_{V^{\prime}})$, which concludes the proof.
	\end{proof}
	The \textit{stabilizer} of a given point $X\in\mathbb{X}$ is denoted by $\mathcal{G}_X\subset\mathcal{G}$. It is easy to check the following.
	\begin{lem}
		\label{lem:preservedbyG}
		Let $\mathbb{X}_0=\mathbb{X}_0(r,c,c')\subset\mathbb{X}(r,c,c')$ be the subscheme defined by the equations \eqref{eq:enhADHMequations}. Then $\mathbb{X}_0$ is preserved by the $\mathcal{G}$-action \eqref{eq:acaolivre}.
	\end{lem}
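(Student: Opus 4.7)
The plan is to verify directly that each of the nine enhanced ADHM equations is invariant under the $\mathcal{G}$-action (\ref{eq:acaolivre}). Since the action acts on each of $A, B, I, J, A', B', F, G$ by pre- or post-composition with $h \in GL(V)$ and/or $h' \in GL(V')$, every term in each equation transforms by conjugation with a matching pair of factors, so the relations are preserved after factoring.

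More concretely, I would write out the new datum $(\tilde A, \tilde B, \tilde I, \tilde J, \tilde A', \tilde B', \tilde F, \tilde G) := (h,h')\cdot X$ and check each equation. For the commutator relation at $e_1$, one computes
\begin{equation*}
[\tilde A, \tilde B] + \tilde I \tilde J = h[A,B]h^{-1} + hIJh^{-1} = h\bigl([A,B]+IJ\bigr)h^{-1} = 0,
\end{equation*}
and analogously $[\tilde A', \tilde B'] = h'[A',B']h'^{-1} = 0$. For the intertwining relations with $F$, e.g.
\begin{equation*}
\tilde A \tilde F - \tilde F \tilde A' = hAh^{-1}\cdot hFh'^{-1} - hFh'^{-1}\cdot h'A'h'^{-1} = h(AF-FA')h'^{-1} = 0,
\end{equation*}
and similarly for $BF-FB'=0$. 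The remaining relations $JF=0$, $GI=0$, $FG=0$, $GA-A'G=0$, $GB-B'G=0$ are handled identically: $h$'s and $h'$'s cancel in adjacent positions, leaving the original expression conjugated by an invertible pair, which therefore vanishes.

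There is no real obstacle here; the statement is a formal consequence of the fact that each defining polynomial is homogeneous under the $\mathcal{G}$-weights and that every monomial in a given equation carries the same left and right $GL(V)$/$GL(V')$ factor, which cancel upon factoring. The only thing to be careful about is matching the nodes correctly so that $h$ and $h'$ appear on the appropriate sides (e.g.\ $\tilde F = hFh'^{-1}$ sits between $V'$ and $V$, so interacting with $\tilde A$ on the left or $\tilde A'$ on the right produces the correct cancellations). Once the bookkeeping is set up, the verification is a line per equation and the invariance of $\mathbb{X}_0$ follows.
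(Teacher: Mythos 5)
Your proposal is correct and follows essentially the same approach as the paper, which simply observes that $(h,h')\cdot X$ satisfies the equations \eqref{eq:enhADHMequations} whenever $X$ does; you merely spell out the conjugation/cancellation computations that the paper leaves implicit. The sample verifications you give (e.g.\ $[\tilde A,\tilde B]+\tilde I\tilde J=h([A,B]+IJ)h^{-1}$ and $\tilde A\tilde F-\tilde F\tilde A'=h(AF-FA')h'^{-1}$) are accurate and the remaining equations do follow identically.
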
 
	\begin{proof}
		If $X=(A,B,I,J,A',B',F,G)\in \mathbb{X}_0$ and $(h,h')\in\mathcal{G}$, then 
		$$
		(h,h')\cdot X = (hAh^{-1}, hBh^{-1}, hI, Jh^{-1}, h'A'h'^{-1}, h'B'h'^{-1}, hFh'^{-1},h'Gh^{-1}).
		$$
		Furthermore, it follows from the equations \eqref{eq:enhADHMequations} that $(h,h')\cdot X$ also satisfies the equations \eqref{eq:enhADHMequations}. In other words, $(h,h')\cdot X\in \mathbb{X}_0$ and $\mathbb{X}_0$ is preserved by the $\mathcal{G}$-action \eqref{eq:acaolivre} in consequence.
	\end{proof}
	\begin{obs}
		Each representation $X = (W,V,V',A,B,I,J,A',B',F,G)$ corresponds to a datum vector $X\in\mathbb{X}_0$. Moreover, two framed representations are isomorphic if and only if the corresponding points in $\mathbb{X}_{0}$ are in the same orbit.  
	\end{obs}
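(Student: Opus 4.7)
The statement is essentially a dictionary between the category of framed representations of the enhanced ADHM quiver and the $\mathcal{G}$-orbit space on $\mathbb{X}_0$. My plan is to verify each direction separately, using the framing to rigidify the role of $W$.

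For the first sentence, I would proceed by unwrapping definitions: a representation of the enhanced ADHM quiver is by definition a tuple $(W,V,V',A,B,I,J,A',B',F,G)$ whose linear maps obey the enhanced ADHM equations \eqref{eq:enhADHMequations}. After fixing (up to choice of basis) vector spaces $V$, $V'$, $W$ of the prescribed dimensions, the remaining data are precisely the eight linear maps, which constitute a vector $X\in\mathbb{X}$, and the ADHM equations cut out exactly the subscheme $\mathbb{X}_0\subset\mathbb{X}$ by Lemma~\ref{lem:preservedbyG}. This part is tautological.

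For the equivalence of isomorphism with being in the same $\mathcal{G}$-orbit, the key observation is that a framing $\varphi\colon W\to\mathbb{C}^r$ rigidifies the vector space at the vertex $e_\infty$. Given two framed representations $(X,\varphi)$ and $(\widetilde X,\widetilde\varphi)$ with the same dimension vector, I would identify both $W$ and $\widetilde W$ with $\mathbb{C}^r$ via their framings; the compatibility condition $\widetilde\varphi\,\xi_\infty=\varphi$ then forces $\xi_\infty=1_{\mathbb{C}^r}$. Thus the only remaining data of an isomorphism $(\xi_1,\xi_2,\xi_\infty)$ are $\xi_1\in GL(V)$ and $\xi_2\in GL(V')$, i.e.\ an element of $\mathcal{G}$.

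To finish the forward direction, I would transcribe the commutativity conditions required for $(\xi_1,\xi_2,1_W)$ to be a morphism of quiver representations: $\widetilde A\xi_1=\xi_1 A$, $\widetilde B\xi_1=\xi_1 B$, $\widetilde I=\xi_1 I$, $\widetilde J=J\xi_1^{-1}$, $\widetilde A'\xi_2=\xi_2 A'$, $\widetilde B'\xi_2=\xi_2 B'$, $\widetilde F\xi_2=\xi_1 F$, and $\widetilde G\xi_1=\xi_2 G$. Solving these for the $\widetilde{(\cdot)}$ quantities yields precisely the formulas of the $\mathcal{G}$-action $\Psi$ in \eqref{eq:acaolivre}, with $(h,h')=(\xi_1,\xi_2)$. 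The converse direction is just reading this chain of equalities backwards: if $(h,h')\cdot X=\widetilde X$, then $(\xi_1,\xi_2,\xi_\infty):=(h,h',1_W)$ is an isomorphism of framed representations by the same formulas.

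There is no real obstacle: the entire content is bookkeeping that separates the rigid vertex $e_\infty$ (pinned down by the framing, hence contributing no gauge freedom) from the ``unframed'' vertices $e_1$ and $e_2$ (whose change-of-basis freedom is exactly $\mathcal{G}=GL(V)\times GL(V')$). The only care required is to make sure the framing condition $\widetilde\varphi\,\xi_\infty=\varphi$ is consistently used to kill any would-be $GL(W)$ factor in the symmetry group.
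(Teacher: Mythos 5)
Your proof is correct and is exactly the bookkeeping argument the authors intend: the paper states this as a remark without proof, and your identification of a representation with a point of $\mathbb{X}_0$, together with the use of the framing condition $\widetilde\varphi\,\xi_\infty=\varphi$ to force $\xi_\infty=1_W$ and reduce isomorphisms to elements of $\mathcal{G}=GL(V)\times GL(V')$ acting via \eqref{eq:acaolivre}, is the standard justification. No gaps.
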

	
	Now we will recall of a few results about Geometric Invariant Theory for representations of quivers. More details can be found in \cite{A01-01-20}. First, the notion of $\chi$-(semi)stability for a given character $\chi:\mathcal{G}\longrightarrow \mathbb{C}^{\ast}$ will be defined. Then it will be proved that the notion of $\Theta$-(semi)stability is equivalent to the notion of $\chi$-(semi)stability for a specific character that will be defined below. 
	\begin{dfn}
		 \index{$\chi$-stable}\index{$\chi$-semistable}
		Let $\mathcal{G}$ be a reductive algebraic group acting on a vector space $\mathbb{X}$. Given an algebraic character
		$$
		\chi:\mathcal{G}\longrightarrow \mathbb{C}^{\ast},
		$$
		a point $X_0\in\mathbb{X}$ is called:
		\begin{itemize}
			\item[(i)] \textit{$\chi$-semistable} \index{$\chi$-semistable}, if there exists a polynomial function $p(X)$ on $\mathbb{X}(r,c,c')$ satisfying:
			\begin{align}
			\label{eq:poly-func-prop}
			p((h,h')\cdot X_0) = \chi(h,h')^lp(X_0),
			\end{align}
			for some $l\in\mathbb{Z}_{\geq 1}$, such that $p(X_0)\neq 0$;
			\item[(ii)] \textit{$\chi$-stable}\index{$\chi$-stable}, if there exists a polynomial function $p(X)$ on $\mathbb{X}(r,c,c')$ satisfying \eqref{eq:poly-func-prop} for some $l\in\mathbb{Z}_{\geq 1}$, such that $p(X_0)\neq 0$ and such that
			$$
			\dim(\mathcal{G}\cdot X_0)=\dim(\mathcal{G}/\Delta),
			$$
			where $\Delta\subset\mathcal{G}$ is the subgroup which acts trivially on $\mathbb{X}$, and the action of $\mathcal{G}$ on $\{x\in\mathbb{X}: p(x)\neq 0\}$ is closed.
		\end{itemize}
	\end{dfn}
	The next Lemma gives to us an equivalent definition of $\chi$-(semi)stability.
	\begin{lem}
		Let $\mathcal{G}$ act on the direct product $\mathbb{X}_0(r,c,c')\times\mathbb{C}$ by 
		\begin{align*}
		(h,h')\times(X,z)\longmapsto ((h,h')\cdot X,\chi(h,h')^{-1}z).
		\end{align*}
		A point $X\in\mathbb{X}$ is
		\begin{itemize}
			\item[(i)] $\chi$-semistable if and only if the closure of the orbit $\mathcal{G}\cdot(X,z)$ is disjoint from the zero section $\mathbb{X}(r,c,c')\times\{0\}$, for all $z\neq 0$;
			\item[(ii)] $\chi$-stable if and only if the orbit is closed in the complement of the zero section, and the stabilizer $G_{(X,z)}$ is a finite index subgroup of $\Delta$.
		\end{itemize}
	\end{lem}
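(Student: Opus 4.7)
The plan is to translate the $\chi$-(semi)stability condition, phrased in terms of semi-invariant polynomials on $\mathbb{X}$, into an invariant-theoretic condition on the product $\mathbb{X}\times\mathbb{C}$, and then invoke standard orbit-separation facts for reductive group actions. The bridge is the observation that a polynomial $p$ on $\mathbb{X}$ satisfies $p((h,h')\cdot Y)=\chi(h,h')^{l}p(Y)$ if and only if the function
\begin{equation*}
\tilde{p}(Y,z):=z^{l}\,p(Y)
\end{equation*}
is $\mathcal{G}$-invariant for the twisted action $(h,h')\cdot(Y,z)=((h,h')\cdot Y,\chi(h,h')^{-1}z)$. Conversely, decomposing any $\mathcal{G}$-invariant polynomial $P(Y,z)=\sum_{m\geq 0} z^{m}p_{m}(Y)$ and comparing the two sides of the invariance relation shows that each $p_{m}$ satisfies $p_{m}((h,h')\cdot Y)=\chi(h,h')^{m}p_{m}(Y)$.

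For part (i), I would prove the forward direction by taking a semi-invariant $p$ of weight $l$ with $p(X)\neq 0$ and using $\tilde p$ above: since $\tilde p$ is $\mathcal{G}$-invariant and $\tilde p(X,z)=z^{l}p(X)\neq 0$ for any $z\neq 0$, while $\tilde p$ vanishes identically on $\mathbb{X}\times\{0\}$, the closure of the orbit $\mathcal{G}\cdot(X,z)$ cannot meet the zero section. For the converse, if the orbit closure is disjoint from $\mathbb{X}\times\{0\}$, reductivity of $\mathcal{G}$ together with the standard separation theorem for invariants of reductive groups (Mumford--Nagata) produces a $\mathcal{G}$-invariant polynomial $P$ on $\mathbb{X}\times\mathbb{C}$ with $P\equiv 0$ on $\mathbb{X}\times\{0\}$ and $P(X,z)\neq 0$. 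Writing $P=\sum_{m\geq 1}z^{m}p_{m}$ and picking an index $m$ with $p_{m}(X)\neq 0$ gives a semi-invariant of weight $m$ non-vanishing at $X$, so $X$ is $\chi$-semistable.

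For part (ii), the additional data is the dimension condition $\dim(\mathcal{G}\cdot X)=\dim(\mathcal{G}/\Delta)$ and the closedness of the $\mathcal{G}$-action on $\{p\neq 0\}$. Using the same dictionary, the orbit $\mathcal{G}\cdot(X,z)\subset\mathbb{X}\times\mathbb{C}^{\ast}$ is closed in the complement of the zero section if and only if $\mathcal{G}\cdot X$ is closed inside the non-vanishing locus of the chosen semi-invariant, because the projection $\mathbb{X}\times\mathbb{C}^{\ast}\to\mathbb{X}$ is $\mathcal{G}$-equivariant (with $\mathcal{G}$ acting trivially on $\mathbb{C}^{\ast}$ up to the central $\chi$-twist, which only permutes fibres). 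The stabilizer $\mathcal{G}_{(X,z)}$ consists of those $(h,h')\in \mathcal{G}_{X}$ with $\chi(h,h')=1$, so it is a finite index subgroup of $\Delta$ exactly when $\mathcal{G}_{X}/\Delta$ is finite, i.e.\ exactly when the orbit $\mathcal{G}\cdot X$ has the maximal possible dimension $\dim(\mathcal{G}/\Delta)$. Combining these two equivalences with part (i) yields the claim.

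The main technical point is the converse in (i): producing a \emph{semi}-invariant from a $\mathcal{G}$-invariant on the total space. Everything else is bookkeeping with the twisted $\mathcal{G}$-action and a clean use of reductivity; the weight decomposition of invariants with respect to the extra $\mathbb{C}^{\ast}$-fibre direction is what makes the two notions interchangeable, and this is the only place where one has to be a little careful to ensure the homogeneous component $p_{m}$ one extracts is itself a polynomial in $Y$ (which follows from choosing $P$ polynomial and separating variables).
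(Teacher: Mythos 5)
The paper does not actually prove this lemma --- it defers entirely to \cite[Lemma 2.2]{A01-01-20} --- and your argument is precisely the one in that reference: the dictionary identifying weight-$l$ semi-invariants $p$ on $\mathbb{X}$ with $\mathcal{G}$-invariants $z^{l}p(Y)$ on the twisted product, plus the Mumford--Nagata separation of the orbit closure from the zero section, which is exactly how part (i) is proved there. Your part (i) is complete; in part (ii) the only soft spot is the claimed equivalence between ``$\mathcal{G}\cdot(X,z)$ is closed off the zero section'' and ``the $\mathcal{G}$-action on $\{p\neq 0\}$ is closed'': the projection remark alone does not give this, since $\pi^{-1}(\mathcal{G}\cdot X)\cap(\mathbb{X}\times\mathbb{C}^{\ast})$ is a union of orbits and the orbit of $(X,z)$ is closed in it only because $\chi(\mathcal{G}_X)$ is finite (which is where the stabilizer/dimension hypothesis enters), and the definition requires closedness of \emph{all} orbits in $\{p\neq 0\}$, not just this one --- both points are handled in King's proof and should be made explicit here as well.
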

	The proof of this Lemma can be found in \cite[Lemma 2.2]{A01-01-20}. One can form the quasi-projective scheme:
	\begin{equation}
	\label{eq:nthetasemistable}
	\mathcal{N}_{\chi}^{ss}(r,c,c')=\mathbb{X}_0(r,c,c')//_{\chi}\mathcal{G}:=\mbox{Proj}(\oplus_{n\geq 0}A(\mathbb{X}_0(r,c,c'))^{\mathcal{G},\chi^{n}}),
	\end{equation}
where
$$ A(\mathbb{X}_0(r,c,c'))^{\mathcal{G},\chi^{n}}:= \{f\in A(\mathbb{X}_0(r,c,c')) ~|~ f((h,h')\cdot X)=\chi(h,h')^nf(X), $$
$$ ~~~~~~~~~~~~~~~~~~~~~~~~ \mbox{ for all }(h,h')\in\mathcal{G}\}. $$
\begin{obs}
		It is well known that $\mathcal{N}_{\chi}^{ss}(r,c,c')$ is projective over Spec$(\mathbb{X}_0(r,c,c')^{\mathcal{G}})$, and it is quasi-projective over $\mathbb{C}$. Geometric Invariant Theory says that $\mathcal{N}^{ss}_{\chi}(r,c,c')$ is the space of $\chi$-semistable orbits; moreover it contains an open subscheme \linebreak $\mathcal{N}^s_{\theta}(r,c,c')\subseteq\mathcal{N}^{ss}_{\theta}(r,c,c')$ consisting of $\chi$-stable orbits.
	\end{obs}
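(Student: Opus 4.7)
The plan is to reduce each assertion to standard theorems from Mumford's Geometric Invariant Theory in the quiver-theoretic formulation of King \cite{A01-01-20}. The reductive group $\mathcal{G}=GL(V)\times GL(V')$ acts linearly on the affine variety $\mathbb{X}_0(r,c,c')$, and the character $\chi$ provides a $\mathcal{G}$-linearization of the trivial line bundle. Writing $R_n:=A(\mathbb{X}_0(r,c,c'))^{\mathcal{G},\chi^n}$ and $R:=\bigoplus_{n\geq 0} R_n$, the ring $R_0$ is the usual invariant ring, and the preceding display is exactly $\mathcal{N}^{ss}_\chi(r,c,c')=\mathrm{Proj}(R)$.

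First I would verify projectivity over $\mathrm{Spec}(R_0)$. Since $\mathcal{G}$ is reductive and $\mathbb{X}_0$ is a closed $\mathcal{G}$-invariant subscheme of the affine space $\mathbb{X}$, Nagata's theorem gives that $R_0$ is a finitely generated $\mathbb{C}$-algebra and that $R$ is a finitely generated graded $R_0$-algebra. Hence $\mathrm{Proj}(R)$ is projective over $\mathrm{Spec}(R_0)$, which establishes the first claim. Because $\mathrm{Spec}(R_0)$ is an affine scheme of finite type over $\mathbb{C}$ and $\mathcal{N}^{ss}_\chi$ is projective over it, composing embeddings gives a locally closed immersion into a projective space over $\mathbb{C}$; this yields the asserted quasi-projectivity over $\mathbb{C}$.

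Next I would identify $\mathcal{N}^{ss}_\chi(r,c,c')$ with the space of $\chi$-semistable orbits. By the definition adopted in the paper, a point $X_0\in\mathbb{X}_0$ is $\chi$-semistable exactly when some homogeneous $p\in R_n$, $n\geq 1$, satisfies $p(X_0)\neq 0$, so the semistable locus is $\mathbb{X}_0^{ss}=\bigcup_{n\geq 1,\,p\in R_n}\{p\neq 0\}$, an open $\mathcal{G}$-invariant subscheme. The structural map $\mathbb{X}_0^{ss}\to\mathrm{Proj}(R)=\mathcal{N}^{ss}_\chi$ given by evaluating the graded invariants is a good categorical quotient, whose closed points parametrize the closed orbits in $\mathbb{X}_0^{ss}$, that is, the $S$-equivalence classes of $\chi$-semistable orbits. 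This is precisely the content of \cite[Theorem 2.1]{A01-01-20}.

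Finally I would extract the open stable locus. Using the reformulation of $\chi$-stability from the preceding lemma (closedness of the orbit of $(X_0,z)$ in $(\mathbb{X}_0\times\mathbb{C})\setminus(\mathbb{X}_0\times\{0\})$ together with finiteness of the stabilizer modulo $\Delta$), the Hilbert--Mumford criterion shows that $\mathbb{X}_0^s\subset\mathbb{X}_0^{ss}$ is open and $\mathcal{G}$-saturated. Its image $\mathcal{N}^s_\chi(r,c,c')$ in $\mathcal{N}^{ss}_\chi(r,c,c')$ is therefore open, and the restriction of the GIT quotient to $\mathbb{X}_0^s\to\mathcal{N}^s_\chi$ is a geometric quotient, so its points are in bijection with the actual $\chi$-stable orbits. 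The only genuine work is checking finite generation of $R$ over $R_0$ in the equivariant setting, which however is a direct consequence of Hilbert's theorem once one invokes reductivity of $\mathcal{G}$; everything else is a formal consequence of the Proj construction and of King's orbit-closure reformulation already cited.
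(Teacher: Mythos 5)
Your proposal is correct and follows exactly the route the paper intends: the paper states this remark without proof, as a collection of well-known consequences of Nagata's finite-generation theorem, the Proj construction, and King's GIT analysis of quiver representations, and your sketch correctly identifies and assembles precisely those ingredients (finite generation of the semi-invariant ring over the invariant ring, projectivity of Proj over the affine quotient, quasi-projectivity over $\mathbb{C}$, and the identification of closed points with closed orbits, respectively $\chi$-stable orbits on the open stable locus). The only cosmetic issue is the reference label: the orbit-theoretic characterization in King's paper is Proposition 3.1/Theorem 4.1 rather than Theorem 2.1, but this does not affect the argument.
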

	The following proposition holds by analogy with \cite[Proposition 3.1, Theorem 4.1]{A01-01-20} and the proof is analogous to \cite[Proposition 3.1]{A01-01}. The proof is given in details just for completeness.
	\begin{pps}
		\label{prop:chi-theta-stab-iff-theta-stab}
		Suppose that $\Theta=(\theta,\theta')\in\mathbb{Z}^2$ and let $\chi_{\Theta}:\mathcal{G}\longrightarrow\mathbb{C}^{\ast}$ be the character
		\begin{equation*}
		\chi_{\Theta}(h,h')=\det(h)^{-\theta}\det(h')^{-\theta'}.
		\end{equation*}
		Let $X=(W,V,V',A,B,I,J,A',B',F,G)$ be a representation of the enhanced ADHM quiver and $X$ the corresponding point in $\mathbb{X}_0$. Thus, $X$ is $\Theta$-(semi)stable if and only if $X$ is $\chi_{\Theta}$-(semi)stable.
	\end{pps}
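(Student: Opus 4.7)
The plan is to invoke the Hilbert--Mumford numerical criterion: $X$ is $\chi_\Theta$-(semi)stable if and only if $\langle\chi_\Theta,\lambda\rangle\geq 0$ (resp.\ $>0$ for nontrivial $\lambda$) for every one-parameter subgroup $\lambda:\mathbb{C}^\ast\to\mathcal{G}$ such that $\lim_{t\to 0}\lambda(t)\cdot X$ exists in $\mathbb{X}_0$. The orbit-dimension clause in the definition of $\chi$-stability is automatic here, since by Proposition~\ref{prop:acaolivre} the $\mathcal{G}$-action is free on stable points, so the diagonal stabilizer $\Delta$ is trivial.

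The first step is to translate limit-existence into an intrinsic property of $X$. A 1-PS $\lambda=(\lambda_V,\lambda_{V'})$ determines weight decompositions $V=\bigoplus_{n\in\mathbb{Z}}V_n$ and $V'=\bigoplus_{n\in\mathbb{Z}}V'_n$, with descending filtrations $V_{\geq n}:=\bigoplus_{k\geq n}V_k$ and $V'_{\geq n}$. Computing directly how $\lambda(t)$ conjugates each arrow and demanding boundedness as $t\to 0$ yields: $A,B,A',B',F,G$ preserve the filtrations in the natural sense, $I(W)\subset V_{\geq 0}$, and $J|_{V_{\geq 1}}=0$. Equivalently, for every $n\in\mathbb{Z}$ the triple $(V_{\geq n},V'_{\geq n},W^{(n)})$ with $W^{(n)}=W$ for $n\leq 0$ and $W^{(n)}=0$ for $n\geq 1$ is a subrepresentation of $X$.

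Next I would compute the Hilbert--Mumford weight. Setting $a_n:=\theta\dim V_n+\theta'\dim V'_n$ and $A_n:=\sum_{k\geq n}a_k=\theta\dim V_{\geq n}+\theta'\dim V'_{\geq n}$, one has $\chi_\Theta(\lambda(t))=t^{-\sum_n n\,a_n}$, and Abel summation combined with the normalization $c\theta+c'\theta'+r\theta_\infty=0$ (which gives $A_{N_-}=-\theta_\infty r$ at the lowest weight $N_-$) rewrites this as
\[
\langle\chi_\Theta,\lambda\rangle=-\sum_{n\geq 1}A_n-\sum_{n\leq 0}\bigl(A_n+\theta_\infty r\bigr).
\]
Conditions \eqref{eq:thetastab03} and \eqref{eq:thetastab04} are exactly the statements $A_n\leq 0$ for $n\geq 1$ and $A_n+\theta_\infty r\leq 0$ for $n\leq 0$, applied to the filtration subrepresentations identified in the previous step. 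Hence every summand on the right is non-negative, so $\langle\chi_\Theta,\lambda\rangle\geq 0$, proving $\Theta$-semistable $\Rightarrow\chi_\Theta$-semistable.

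For the converse, given a proper subrepresentation $\widetilde X\subset X$ I would exhibit an explicit 1-PS whose HM weight realizes the corresponding $\Theta$-expression. For type $(0,\widetilde c,\widetilde c')$, fix complements $V=\widetilde V\oplus U$, $V'=\widetilde V'\oplus U'$ and let $\lambda$ act with weight $+1$ on $\widetilde V,\widetilde V'$ and weight $0$ on $U,U'$; for type $(r,\widetilde c,\widetilde c')$, use weight $0$ on $\widetilde V,\widetilde V'$ and weight $-1$ on $U,U'$. In each case the subrepresentation axioms coincide verbatim with the limit-existence conditions of the first step, and a direct calculation (using the normalization in the rank-$r$ case) gives $\langle\chi_\Theta,\lambda\rangle=-\theta\widetilde c-\theta'\widetilde c'$ or $-\theta_\infty r-\theta\widetilde c-\theta'\widetilde c'$ respectively, so $\chi_\Theta$-semistability forces \eqref{eq:thetastab03} and \eqref{eq:thetastab04}. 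The stable versions follow by replacing all inequalities with strict ones and noting that a nontrivial $\lambda$ forces at least one summand in the Abel expression above to be strictly positive. The main technical obstacle is the Abel-summation identity itself: the book-keeping required to absorb the lowest-weight boundary term via $c\theta+c'\theta'+r\theta_\infty=0$ and to combine the rank-$0$ and rank-$r$ subrepresentation inequalities into a single manifestly non-negative expression for $\langle\chi_\Theta,\lambda\rangle$.
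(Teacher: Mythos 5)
Your proposal is correct, and it takes a recognizably different technical route from the paper's. The paper works with the orbit-closure characterization of $\chi_{\Theta}$-(semi)stability on $\mathbb{X}_0\times\mathbb{C}$ (the lemma quoted from King just before the proposition) and writes out in detail only the implication $\chi_{\Theta}$-(semi)stable $\Rightarrow$ $\Theta$-(semi)stable: from a destabilizing subrepresentation of type $(0,\widetilde c,\widetilde c')$ or $(r,\widetilde c,\widetilde c')$ it builds a two-weight one-parameter subgroup adapted to a splitting $V=\widetilde V\oplus\widehat V$, $V'=\widetilde V'\oplus\widehat V'$, checks that the lifted orbit degenerates into the zero section (or that the stabilizer of $(X,z)$ becomes nontrivial), and then declares the converse ``analogous''. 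Your argument instead invokes the full Hilbert--Mumford numerical criterion, and therefore must control \emph{arbitrary} one-parameter subgroups; this is precisely the content of the direction the paper leaves implicit, since a general $\lambda$ has a multi-step weight filtration rather than a two-block splitting. Your key observations are sound: limit-existence is equivalent to each $(V_{\geq n},V'_{\geq n},W^{(n)})$ being a subrepresentation, these are exactly of the two types $(0,\cdot,\cdot)$ ($n\geq1$) and $(r,\cdot,\cdot)$ ($n\leq0$) allowed in the paper's restricted definition of $\Theta$-stability, and the Abel summation together with the normalization \eqref{eq:stability-parameter} expresses $\langle\chi_{\Theta},\lambda\rangle$ as minus the sum of the corresponding $\Theta$-slopes, so that \eqref{eq:thetastab03} and \eqref{eq:thetastab04} give non-negativity term by term (I checked the boundary bookkeeping: for $n\leq N_-$ the extra terms $A_n+\theta_\infty r$ vanish identically, so extending the sum over all $n\leq0$ is harmless). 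Your converse direction coincides with the paper's explicit subgroups. What your route buys is an actually complete proof of $\Theta\Rightarrow\chi_{\Theta}$; what the paper's route buys is that it never needs the numerical criterion beyond the elementary orbit-closure lemma it has already stated. The only points I would make explicit in a final write-up are that $\Delta$ is trivial (which does follow from Proposition \ref{prop:acaolivre}, since $\Delta$ is contained in the stabilizer of any stable point), and the short verification that a nontrivial $\lambda$ produces at least one nonzero filtration step of one of the two allowed types, so that the strict inequalities in the stable case force a strictly positive summand.
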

	\begin{proof}
Suppose that $X$ is $\chi_{\Theta}$-semistable. Let $\theta_{\infty}\in \mathbb{Z}$ such that it satisfies \eqref{eq:stability-parameter}. Suppose that there exists a nontrivial proper subrepresentation
$$ \widetilde{X}=(\widetilde{W},\widetilde{V},\widetilde{V'},\widetilde{A},\widetilde{B},\widetilde{I},\widetilde{J},\widetilde{A'},\widetilde{B'},\widetilde{F},\widetilde{G}) $$
of numerical type $(r,c,c')$ of the representation $X$ such that $\widetilde{r}=\dim(\widetilde{W})\in\{0,\mbox{ }r\}$ satisfies
		\begin{align*}
		\widetilde{c}\theta+\widetilde{c'}\theta'+\widetilde{r}\theta_{\infty} > 0.
		\end{align*}
		First take $\widetilde{r}=0$. Then $\widetilde{W}=\{0\}$. Since $\widetilde{X}$ is a subrepresentation of $X$, $\widetilde{V}$ and $\widetilde{V'}$ are subspaces of $V$ and $V'$, respectively, and it follows that
		\begin{align*}
		F(\widetilde{V'})\subseteq\widetilde{V}, \quad G(\widetilde{V})\subseteq\widetilde{V'},\quad A(\widetilde{V}),\mbox{ }B(\widetilde{V})\subseteq\widetilde{V},\\ A'(\widetilde{V'}),\mbox{ }B'(\widetilde{V'})\subseteq\widetilde{V'},\quad J(\widetilde{V})=0. 
		\end{align*}
		Thus, there exist direct sum decompositions 
		\begin{equation}
		\label{eq:dir-sum-decomp}
		\left\{\begin{array}{c}
		V\cong\widetilde{V}\oplus\widehat{V}\\
		V'\cong\widetilde{V'}\oplus\widehat{V'}
		\end{array}\right.\end{equation}
		such that the linear maps $A,$ $B,$ $A'$, $B'$, $F$, $G$ have block decomposition of the form 
		\begin{equation}
		\label{eq:abfab-block-form-dec}
		\left[\begin{array}{cc}
		\ast & \ast\\
		0    & \ast
		\end{array}\right]
		\end{equation}
		while the linear maps $I$ and $J$ have block decomposition of the form
		\begin{equation*}
		I =\left[\begin{array}{c}
		\ast\\
		\ast
		\end{array}\right],\quad J= \left[\begin{array}{cc}
		0    & \ast
		\end{array}\right].
		\end{equation*}
		Consider a one-parameter subgroup of $\mathcal{G}$ of the form
		\begin{equation}
		\label{eq:1-par-subg-r0}
		h(t) = \left[\begin{array}{cc}
		t1_{\widetilde{V}} & 0\\
		0    & 1_{\widehat{V}}
		\end{array}\right],\quad h'(t) = \left[\begin{array}{cc}
		t1_{\widetilde{V}} & 0\\
		0    & 1_{\widehat{V'}}
		\end{array}\right].
		\end{equation}
		It follows that the linear maps
		\begin{align*}
		(A(t),B(t),I(t),J(t),A'(t),B'(t),F(t),G(t)) = (h(t),h'(t))\cdot X
		\end{align*}
		have block decomposition of the form
		\begin{equation}
		\label{eq:abfabt-block-dec}
		\left[\begin{array}{cc}
		\ast & t\ast\\
		0    & \ast
		\end{array}\right]
		\end{equation}
		and
		\begin{equation}
		\label{eq:ij-block-decomp-r0}
		I(t) = \left[\begin{array}{c}
		t\ast\\
		\ast
		\end{array}\right], \quad J(t) = \left[\begin{array}{cc}
		0    & \ast
		\end{array}\right].
		\end{equation}
		However,
		\begin{equation*}
		\begin{array}{lcl}
		\chi_{\Theta}(h(t),h'(t))\cdot z & = & \left(\det\left[\begin{array}{cc}
		t1_{\widetilde{V}} & 0\\
		0    & 1_{\widehat{V}}
		\end{array}\right]^{-\theta}\det\left[\begin{array}{cc}
		t1_{\widetilde{V}} & 0\\
		0    & 1_{\widehat{V'}}
		\end{array}\right]^{-\theta'}\right)^{-1}\cdot z \\
		& = & (t^{-\theta\widetilde{c}-\theta'\widetilde{c'}})^{-1}\cdot z\\
		& = & t^{\theta\widetilde{c}+\theta'\widetilde{c'}}\cdot z
		\end{array}
		\end{equation*}
		with $\theta\widetilde{c}+\theta'\widetilde{c'}>0$. Therefore, 
		\begin{align*}
		\lim_{t\rightarrow 0} (h(t),h'(t)\cdot(X,z))\in \mathbb{X}\times\{0\},
		\end{align*} 
		which contradicts the $\chi_{\Theta}$-semistability condition.\\
		\indent Now suppose $\widetilde{r}=r$. Thus, analogously to the case $r=0$, one can obtain 
		\begin{align*}
		F(\widetilde{V'})\subseteq\widetilde{V}, G(\widetilde{V})\subseteq\widetilde{V'},\quad A(\widetilde{V}),\mbox{ }B(\widetilde{V})\subseteq\widetilde{V}, \quad A'(\widetilde{V'}),\mbox{ }B'(\widetilde{V'}),\quad I(\widetilde{W})\subseteq\widetilde{V'}. 
		\end{align*}
		Therefore, there exist direct sum decompositions like in \eqref{eq:dir-sum-decomp} such that the maps $A,$ $B,$ $A',$ $B'$, $F$ and $G$ have block form decomposition of the form \eqref{eq:abfab-block-form-dec}, while $I$, $J$ have block form decompositions of the form
		\begin{equation}
		\label{eq:ij-block-decomp-rr}
		I =\left[\begin{array}{c}
		\ast\\
		0
		\end{array}\right],\quad J= \left[\begin{array}{cc}
		\ast    & \ast
		\end{array}\right].
		\end{equation}
		Now consider a one-parameter subgroup of $\mathcal{G}$ of the form
		\begin{equation}
		\label{eq:one-par-subg-rr}
		h(t) = \left[\begin{array}{cc}
		1_{\widetilde{V}} & 0\\
		0    & t^{-1}1_{\widehat{V}}
		\end{array}\right],\quad h'(t) = \left[\begin{array}{cc}
		t1_{\widetilde{V}} & 0\\
		0    & t^{-1}1_{\widehat{V'}}
		\end{array}\right].
		\end{equation}
		It follows that the linear maps
		\begin{align*}
		(A(t),B(t),I(t),J(t),A'(t),B'(t),F(t),G(t)) = (h(t),h'(t))\cdot X
		\end{align*}
		have block decomposition of the form \eqref{eq:abfabt-block-dec} and
		\begin{equation}
		\label{eq:ijt-block-dec-rr}
		I(t) = \left[\begin{array}{c}
		\ast\\
		0
		\end{array}\right], \quad J(t) = \left[\begin{array}{cc}
		\ast    & t\ast
		\end{array}\right].
		\end{equation}
		However,
		\begin{equation*}
		\begin{array}{lcl}
		\chi_{\Theta}(h(t),h'(t))\cdot z & = & \left(\det\left[\begin{array}{cc}
		1_{\widetilde{V}} & 0\\
		0    & t^{-1}1_{\widehat{V}}
		\end{array}\right]^{-\theta}\det\left[\begin{array}{cc}
		t1_{\widetilde{V}} & 0\\
		0    & t^{-1}1_{\widehat{V'}}
		\end{array}\right]^{-\theta'}\right)^{-1}\cdot z \\
		& = & (t^{(\widetilde{c}-c)\theta+(\widetilde{c'}-c')\theta'})\cdot z
		\end{array}
		\end{equation*}
		in which $((\widetilde{c}-c)\theta+(\widetilde{c'}-c')\theta')>0$. Indeed
		\begin{equation*}
		\begin{array}{lcl}
		(\widetilde{c}-c)\theta+(\widetilde{c'}-c')\theta' & = & (\widetilde{c}\theta+\widetilde{c'}\theta')\theta_{\infty}- (c\theta+c'\theta')\\
		& > & -\widetilde{r}\theta_{\infty} + r\theta_{\infty}\\
		& = & 0
		\end{array}
		\end{equation*}
		
		Therefore, 
		\begin{align*}
		\lim_{t\rightarrow 0}(h(t),h'(t)\cdot(X,z))&= \lim_{t\rightarrow 0}t^{((\widetilde{c}-c)\theta+(\widetilde{c'}-c')\theta')}\cdot Z\nonumber\\
		&= 0\in \mathbb{X}\times\{0\},
		\end{align*} 
		which contradicts the $\chi_{\Theta}$-semistability condition since the closure of the orbit intersects $\mathbb{X}(r,c,c')\times\{0\}$ for some $z\neq 0$. Then, in both cases, $\chi_{\Theta}$-semistability implies $\Theta$-semistability.\\
		\indent Now suppose that $X$ is $\chi_{\Theta}$-stable but it is not $\Theta$-stable. In particular if $X$ is $\chi_{\Theta}$-stable, $X$ is $\chi_{\Theta}$-semistable and thus $\Theta$-semistable in consequence. Therefore, there exists a proper subrepresentation $\widetilde{X}$ of $X$ with numerical type $(\widetilde{r},\widetilde{c},\widetilde{c'})$ such that $\widetilde{r}\in\{0,r\}$ and
		$$
		\widetilde{c}\theta+\widetilde{c'}\theta'+\widetilde{r}\theta_{\infty}=0.
		$$
		
		There are two cases to consider, $\widetilde{r}=0$ and $\widetilde{r}=r$. In both cases, it will be proved that $X$ has a nontrivial stabilizer, which contradicts the $\chi_{\Theta}$-stability condition.\\
		\indent First, consider $\widetilde{r}=0$. As above, $A$, $B$, $F$, $G$, $A'$ and $B'$ have block decomposition of the form \eqref{eq:abfab-block-form-dec} the direct sum decomposition of $V$ and $V'$ like \eqref{eq:dir-sum-decomp}. Consider a one-parameter subgroup, $(h(t),h'(t))$, of $\mathcal{G}$ of the form \eqref{eq:1-par-subg-r0}. The linear maps $(A(t),B(t),I(t),J(t),A'(t),B'(t),F(t),G(t)) = (h(t),h'(t))\cdot (X,z)$ have block form decomposition of the form \eqref{eq:abfabt-block-dec} and \eqref{eq:ij-block-decomp-r0}. Therefore, the limit of $(h(t),h'(t))\cdot (X,z)$ as $t\rightarrow 0$ has block decomposition of the form
		\begin{align*}
		\left[\begin{array}{cc}
		\ast & 0\\
		0    & \ast
		\end{array}\right], \mbox{for $A(t)$, $B(t)$, $A'(t)$, $B'(t)$, $F(t)$ and $G(t)$},\\ I(t)=\left[\begin{array}{c}
		0\\
		\ast
		\end{array}\right],\quad J(t)=\left[\begin{array}{cc}
		0 & \ast
		\end{array}\right]
		\end{align*}
		On the other hand, since $\mathcal{G}\cdot(X,z)$ is closed for $z\neq 0$, the linear maps $A$, $B$, $A'$, $B'$, $F$, $G$ must have block decomposition of the form
		\begin{equation*}
		\left[\begin{array}{cc}
		\ast & 0\\
		0    & \ast
		\end{array}\right]
		\end{equation*}
		while $I$, $J$ have block decomposition of the form 
		\begin{equation*}
		I=\left[\begin{array}{c}
		0\\
		\ast
		\end{array}\right],\quad J=\left[\begin{array}{cc}
		0 & \ast
		\end{array}\right].
		\end{equation*}
		
		Thus, the subgroup $(h(t),h'(t))$ stabilizes $(X,z)$ which contradicts the $\chi_{\Theta}$-stability condition.\\
		\indent Now consider $\widetilde{r}=r$. One can repeat the step above obtaining the block decomposition in \eqref{eq:abfab-block-form-dec} for the linear maps $A$, $B$, $A'$, $B'$, $F$, and $G$, while $I$ and $J$ have the block decomposition in \eqref{eq:ij-block-decomp-rr}. Thus, let $(h(t),h'(t))$ be a one-parameter subgroup of $\mathcal{G}$ of the form \eqref{eq:one-par-subg-rr}. Then, the linear maps
		$$
		(A(t),B(t),I(t),J(t),A'(t),B'(t),F(t),G(t)) = (h(t),h'(t))\cdot X
		$$
		
		are such that $A(t)$, $B(t)$, $A'(t)$, $B'(t)$, $F(t)$ and $G(t)$ have block decomposition of the form \eqref{eq:abfabt-block-dec} while
		$I(t)$ and $J(t)$ have block decomposition of the form \eqref{eq:ijt-block-dec-rr}. Therefore, the limit of $(h(t),h'(t))\cdot(X,z)$ as $t\rightarrow 0$ have block decomposition of the form
		\begin{align*}
		\left[\begin{array}{cc}
		\ast & 0\\
		0    & \ast
		\end{array}\right], \mbox{for $A(t)$, $B(t)$, $A'(t)$, $B'(t)$, $F(t)$ and $G(t)$},\\ I(t)=\left[\begin{array}{c}
		\ast\\
		0
		\end{array}\right],\quad J(t)=\left[\begin{array}{cc}
		\ast & 0
		\end{array}\right].
		\end{align*}
		Again, this implies that $(X,z)$ have a nontrivial stabilizer leading to a contradiction.\\
		\indent The other side of this proof is analogous.
\end{proof}
	
Therefore, it follows from Lemma \ref{lem:thetastab-equiv-stab} and Proposition \ref{prop:chi-theta-stab-iff-theta-stab} that there exists a chamber in $\mathbb{Q}^{2}$ given by $\theta'> 0$ and $\theta + c'\theta'<0$ such that all the stability conditions defined until now are the same. Thus, given a representation of the enhanced ADHM quiver $X$ with numerical type $(r,c,c')$ and $\Theta=(\theta,\theta',\theta_{\infty})$ satisfying $\theta'> 0$ and $\theta + c'\theta'<0$ from now on $X$ will be called \textit{stable}\index{Representation of the enhanced ADHM quiver!stable} if it satisfies one of the conditions below:
	\begin{itemize}
		\item[$(i)$] X satisfies the conditions $(S.1)$ and $(S.2)$ of the Lemma \ref{lem:thetastab-equiv-stab};
		\item[$(ii)$] X is $\Theta$-stable;
		\item[$(iii)$] X is $\Theta$-semistable;
		\item[$(iv)$] X is $\chi_{\Theta}$-stable;
		\item[$(v)$] X is $\chi_{\Theta}$-semistable.
	\end{itemize}
	Then, in a suitable chamber, the moduli space of framed stable representations of numerical type $(r,c,c')$ of the enhanced ADHM quiver denoted, $\mathcal{N}(r,c,c')= \mathcal{N}_{\chi}^{ss}(r,c,c')$ is given by the equation \eqref{eq:nthetasemistable}.\\
	\indent \label{x''01}For further reference, let $X=(W,V,V',A,B,I,J,A',B',F)$ be a framed stable representation of numerical type $(r,c,c')$ of the enhanced ADHM quiver. One can consider the stable representation of the ADHM quiver
$$ X''=(W,V'',A'',B'',I'',J'') $$
of numerical type $(r,c-c')$, where $V'':=V/Im(F)$ and the maps $A''$, $B''\in End(V'')$, $I\in Hom(W,V'')$ and $J\in Hom(V'',W)$ are inherited by the quotient $V/Im(F)$. Moreover, $X''$ is indeed stable and satisfies the ADHM equation $[A'',B'']+I''J''=0$. See the proof below.\\
	\indent Consider a basis in $V$ such that 
	\begin{equation*}
	F = \left[\begin{array}{c}
	1_{V'}\\
	0
	\end{array}\right].
	\end{equation*}
	Let 
	\begin{equation*}
	\begin{array}{cccc}
	\pi':& V'\oplus V'' & \longrightarrow & V'\\
	&  (v',v'')    & \longmapsto     & v' \end{array},\quad \begin{array}{cccc}
	\pi'':& V'\oplus V'' & \longrightarrow & V''\\
	&  (v',v'')    & \longmapsto     & v''
	\end{array}.\end{equation*} 
	Then $V$ can be decomposed as $V=V'\oplus V/Im(F)=V'\oplus V''$ and $A''$, $B''$, $I''$, $J''$ are given by
	\begin{equation*}
	A''=A|_{V''},\quad B''=B|_{V''},\quad I''=\pi''\circ I,\quad J''=J|_{V''}.
	\end{equation*}
	Therefore, 
	\begin{align}
	[A'',B''] + I''J'' &= [A|_{V''},B|_{V''}] + (\pi''\circ I)\circ(J|_{V''}) \nonumber \\
	&= [A,B]|_{V''}+ (IJ)|_{V''} \nonumber \\
	&= ([A,B]+IJ)|_{V''} \nonumber \\
	&= 0
	\end{align}
	and $X''=(W,V'',A'',B'',I'',J'')$ is stable. Indeed, suppose that there exists $0\subset S'' \subsetneq V''$ a subspace of $V''$ such that 
	\begin{equation*}
	A''(S''),\quad B''(S''),\quad I''(W) \subset S''.
	\end{equation*}
	Then $0\subset V'\oplus S'' \subsetneq V$ is a subspace such that
	\begin{equation*}
	A(V'\oplus S''),\quad B(V'\oplus S''),\quad I(W)\subset V'\oplus S''.
	\end{equation*}
	In fact, fix $(v',s'')\in V'\oplus S''$. Thus
	\begin{align}
	A(v',s'') &= (A|_{V'}(v'),A|_{V''}(s'')) \nonumber \\
	&\in V'\oplus S'' \nonumber,
	\end{align}
	which means that $A(v',s'')\in V'\oplus S''$ for all $(v',s'')\in V'\oplus S''$, i.e., $A(V'\oplus S'')\subset V'\oplus S''$.
	Analogously one can obtain that $B(V'\oplus S'')\subset V'\oplus S''$. Moreover, 
	\begin{align}
	I(W) &= I(W)\cap V' \oplus I(W)\cap V'' \nonumber \\
	&= \pi'\circ I(W)\oplus \pi''\circ I(W) \nonumber \\
	&\subset V'\oplus S''
	\end{align}
	which contradicts the condition $(S.2)$ of Lemma \ref{lem:thetastab-equiv-stab}.\\
	\indent Therefore\label{x''02}, if $X=(W,V,V',A,B,I,J,A',B',F)$ is a framed stable representation of the enhanced ADHM quiver with numerical type $(r,c,c')$, then \linebreak $X''=(W,X'',A'',B'',I'',J'')$ is a stable representation of the ADHM quiver of numerical type $(r,c-c')$.\\

The following Lemma is analogous to \cite[Lemma 3.2]{A01-01}; we include its proof here for the sake of completeness.

\begin{lem} \label{lem:q-sobrejetiva}
Let $\mathcal{M}(r,c-c')$ be the moduli space of the stable representations of the ADHM quiver of numerical type $(r,c-c')$. There exists a surjective morphism
\begin{equation*}	\begin{array}{cccc}
\mathfrak{q}: & \mathcal{N}(r,c,c')    & \longrightarrow & \mathcal{M}(r,c-c')\\
& [(W,V,V',A,B,I,J,A',B',F)]  & \longmapsto     & [(W,V'',A'',B'',I'',J'')]
\end{array} \end{equation*}
where $[(W,V,V',A,B,I,J,A',B',F)]$ and $[(W,V'',A'',B'',I'',J'')]$ denote the isomorphism class of the framed stable representation $(W,V,V',A,B,I,J,A',B',F)$ of the enhanced ADHM quiver and the isomorphism class of the stable representation $(W,V'',A'',B'',I'',J'')$ of the ADHM quiver constructed above, respectively.
\end{lem}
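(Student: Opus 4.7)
I would prove the lemma in three steps: descending the rule $X\mapsto X''$ to isomorphism classes, upgrading it to a morphism of schemes, and constructing an explicit preimage for any point.

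\emph{Descent and scheme structure.} Any isomorphism $(\xi_1,\xi_2,1_W)\colon X\to\widetilde X$ of framed stable enhanced ADHM representations satisfies $\xi_1\circ F=\widetilde F\circ\xi_2$, so $\xi_1$ carries $\im(F)$ onto $\im(\widetilde F)$ and descends to an isomorphism $\bar\xi_1\colon V''\to\widetilde V''$ intertwining the induced ADHM data. Hence the rule $X\mapsto X''$ is well-defined on isomorphism classes. Over the stable locus of $\mathbb{X}_0(r,c,c')$, condition $(S.1)$ makes $F$ injective, so $\im(F)$ is a sub-bundle of constant rank $c'$ of the trivial bundle with fibre $V$; the quotient is a rank-$(c-c')$ vector bundle equipped with induced maps $(\bar A,\bar B,\bar I,\bar J)$ varying morphically in $X$. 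The resulting family of stable ADHM data of type $(r,c-c')$ over the stable locus yields a morphism to $\calm(r,c-c')$, which is $\mathcal{G}$-invariant and thus descends to the desired morphism $\mathfrak{q}\colon\caln(r,c,c')\to\calm(r,c-c')$.

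\emph{Surjectivity.} Let $[X_0'']=[(A_0'',B_0'',I_0'',J_0'')]\in\calm(r,c-c')$ be represented on $V_0''=\C^{c-c'}$. Choose $c'$ pairwise distinct pairs $(a_i,b_i)\in\C^2$ with $b_i\notin\mathrm{spec}(B_0'')$. Set $V':=\C^{c'}$, $V:=V_0''\oplus V'$, let $F\colon V'\hookrightarrow V$ be the inclusion of the second summand, and put $A':=\mathrm{diag}(a_i)$, $B':=\mathrm{diag}(b_i)$ (so $[A',B']=0$). Pick $I_2\colon W\to V'$ with each of its $c'$ coordinate projections nonzero as elements of $W^*$ (possible since $r\ge 1$), and define
\begin{equation*}
I:=\begin{pmatrix}I_0''\\ I_2\end{pmatrix},\ J:=(J_0'',0),\ B:=\begin{pmatrix}B_0''&0\\ 0&B'\end{pmatrix},\ A:=\begin{pmatrix}A_0''&0\\ A_{21}&A'\end{pmatrix},
\end{equation*}
with the $i$-th row of $A_{21}$ given by $A_{21}^{(i)}:=-(I_2 J_0'')^{(i)}(B_0''-b_i\cdot 1_{V_0''})^{-1}$, well-defined by the spectral genericity of $b_i$. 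A direct check shows that all enhanced ADHM equations hold and that $F$ is injective, so $(S.1)$ holds. For $(S.2)$, suppose $S\subsetneq V$ is $A,B$-invariant with $I(W)\subset S$: the image of $S$ in $V/\im(F)\cong V''$ is $A_0'',B_0''$-invariant and contains $I_0''(W)$, so equals $V''$ by stability of $X_0''$. Cayley--Hamilton for $B_0''$ gives $p(B)I(w)=(0,p(B')I_2 w)\in S\cap V'$ for all $w\in W$, where $p$ is the characteristic polynomial of $B_0''$; since $p(B')=\mathrm{diag}(p(b_i))$ is invertible, $\mathbb{C}[B']$ equals the algebra of diagonal matrices on $V'$ and preserves $S\cap V'$, and the coordinates of $I_2$ are nonvanishing, one deduces $S\cap V'=V'$. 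Hence $S=V$, a contradiction, and $(A,B,I,J)$ is stable; by construction $\mathfrak{q}([X])=[X_0'']$.

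The delicate point is this last stability verification: a naive block-diagonal lift would retain $V_0''\oplus 0$ as a proper invariant subspace containing $I(W)$, so the nonzero $I_2$ together with the ADHM-forced $A_{21}$ are essential to reach $V'$ from $I(W)$; the spectral genericity of the $b_i$ is precisely what makes the Sylvester-type equation for $A_{21}$ uniquely solvable.
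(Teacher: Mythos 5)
Your proposal is correct, and its overall skeleton (realize $V$ as $V'\oplus V''$, take $F$ to be the inclusion of the $V'$-summand, and lift $(A_0'',B_0'',I_0'',J_0'')$ by a triangular block matrix) is the same as the paper's. The details differ in two genuine ways. For existence of the lift, the paper keeps both off-diagonal blocks $\widetilde A,\widetilde B\in\Hom(V'',V')$ as unknowns and argues that the resulting inhomogeneous linear system ($c'(c-c')$ equations in $2c'(c-c')$ unknowns) has solutions, whereas you set $\widetilde B=0$ and solve the Sylvester equation $A_{21}B_0''-B'A_{21}=-I_2J_0''$ explicitly row by row via the resolvent $(B_0''-b_i)^{-1}$, using $b_i\notin\mathrm{spec}(B_0'')$; this is cleaner and makes solvability manifest rather than asserted. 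For stability, the paper first proves an equivalence of stability of $(A,B,I,J)$ with two conditions on $(\widetilde A,\widetilde B,\widetilde I,A',B')$ and then verifies those with a rank-one $\widetilde I$ whose image is a cyclic vector for $B'$ (Vandermonde argument); you instead argue directly on an invariant subspace $S\supset I(W)$, pushing it to all of $V''$ by stability of $X_0''$ and then recovering $V'\subset S$ via $p(B)I(w)=(0,p(B')I_2w)$ with $p$ the characteristic polynomial of $B_0''$ and Lagrange interpolation in $\mathbb{C}[B']$. Both buy the same conclusion; yours avoids the intermediate equivalence lemma, and your treatment of descent to isomorphism classes and of the scheme structure is actually more explicit than the paper's one-line appeal to ``the construction above.'' Two small wording points: you need the $b_i$ themselves pairwise distinct (not merely the pairs $(a_i,b_i)$) for $\mathbb{C}[B']$ to be the full diagonal algebra, and the $a_i$ play no role at all in your argument; also, what makes $V'$ reachable from $I(W)$ is really the nonvanishing rows of $I_2$ together with the distinctness of the $b_i$ --- the forced $A_{21}$ is needed only for the ADHM equation, not for stability.
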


\begin{proof}
The construction above shows the existence of the morphism $\mathfrak{q}$. It is enough to prove that this morphism is surjective. So, fix an ADHM data $(A'',B'',I'',J'')$ of numerical type $(r,c-c')$ and the morphisms $A'$, $B'\in End(V')$. Set $V=V'\oplus V''$ and 
		\begin{equation*}
		F = \left[\begin{array}{c}
		1_{V'}\\
		0
		\end{array}\right].
		\end{equation*}
		Now let $A,$ $B\in End(V)$, $I\in Hom(W,V)$ and $J\in Hom(V,W)$ be of the following form
		\begin{equation*}
		A=\left[\begin{array}{cc}
		A' & \widetilde{A} \\
		0  & A''
		\end{array}\right],\quad B=\left[\begin{array}{cc}
		B' & \widetilde{B}\\
		0  & B''
		\end{array}\right],\quad I =\left[\begin{array}{c}
		\widetilde{I}\\
		I''
		\end{array}\right],\quad J =\left[\begin{array}{cc}
		0 & J''
		\end{array}\right],\quad 
		\end{equation*}
		according to the decomposition $V=V'\oplus V''$. This means that
		\begin{equation*}
		\widetilde{A},\mbox{ } \widetilde{B}\in Hom(V'',V')\quad \mbox{and}\quad \widetilde{I}\in Hom(W,V').
		\end{equation*}
		It is easy to check that
		\begin{equation}
		\label{eq:lem-enhADHMeqobt01}
		AF - FA' = BF-FB' = JF = 0
		\end{equation}
		and
		\begin{equation}
		\label{eq:lem-enhADHMeqobt02}
		[A,B] + IJ = 0 \Leftrightarrow \left\{\begin{array}{l}
		[A',B'] = 0\\
		A'\widetilde{B}+ \widetilde{A}B''-B'\widetilde{A}-\widetilde{B}A''+\widetilde{I}J'' = 0\end{array}\right. .
		\end{equation}
		
The map $F$ above is clearly injective; in addition, note that the ADHM datum $(A,B,I,J)$ is stable if and only if it satisfies:
		\begin{itemize}
			\item[$(i)$] at least one of the maps $\widetilde{A}$, $\widetilde{B}$ and $\widetilde{I}$ is nontrivial;
			\item[$(ii)$] there is no proper subspace $S'\subsetneq V'$ such that
			\begin{align}
			\label{prop:minilemmainside}
			\widetilde{A}(V''),\quad \widetilde{B}(V''),\quad \widetilde{I}(W)\subset S'\quad \mbox{and}\quad A'(S), B'(S)\subset S'.
			\end{align}
		\end{itemize}
		In fact, first suppose that $(A,B,I,J)$ is stable and $\widetilde{A}=\widetilde{B}=\widetilde{I}=0$. Then 
		\begin{equation*}
		A =\left[\begin{array}{cc}
		A' & 0\\
		0  & A''
		\end{array}\right],\quad B =\left[\begin{array}{cc}
		B' & 0\\
		0  & B''
		\end{array}\right],\quad I =\left[\begin{array}{c}
		0\\
		I''
		\end{array}\right], J =\left[\begin{array}{cc}
		0 & J''
		\end{array}\right].
		\end{equation*}
		Fix $(0,v'')\in 0\oplus V''$. Thus,  
		\begin{equation*}
		A(0,v'') = \left[\begin{array}{cc}
		A' & 0\\
		0  & A''
		\end{array}\right]\left[\begin{array}{c}
		0 \\ v''\end{array}\right] = \left[\begin{array}{cc}
		0 & A''(v'')\end{array}\right] \in 0\oplus V''
		\end{equation*}
		for all $(0,v'')\in 0\oplus V''$, which means $A(0\oplus V'')\subset 0\oplus V''$. Analogously $B(0\oplus V'')\subset 0\oplus V''$. Moreover, fixing $w\in W$
		\begin{equation*}
		I(W) = \left[\begin{array}{c}
		0\\
		I''
		\end{array}\right][w] = \left[\begin{array}{c}
		0\\
		I''(w)\end{array}\right] \in 0\oplus V''
		\end{equation*}
		for all $w\in W$. Therefore 
		$$
		A(0\oplus V''),\quad B(0\oplus V''),\quad I(W)\subset 0\oplus V'',
		$$
		which is a contradiction.\\
		\indent Now suppose that there exists a proper subspace $S'\subsetneq V'$ such that the conditions \eqref{prop:minilemmainside} are satisfied. Thus, $S=S'\oplus V''\subsetneq V$ is a subspace such that $A(S)$, $B(S)$, $I(W)\subset S$. Indeed, let $(s',v'')\in S'\oplus V''$. Then
		\begin{equation*}
		A(s',v'') = \left[\begin{array}{cc}
		A' & \widetilde{A}\\
		0  & A''
		\end{array}\right]\left[\begin{array}{c}
		s' \\ v''\end{array}\right] = \left[\begin{array}{cc}
		A'(s') + \widetilde{A}(v'') & A''(v'')\end{array}\right] \in S'\oplus V''
		\end{equation*}
		for all $(s',v'')\in S'\oplus V''$, i.e., $A(S'\oplus V'')\subset S'\oplus V''$. Analogously, $B(S'\oplus V'')\subset S'\oplus V''$. Moreover, fixing $w\in W$
		\begin{equation*}
		I(W) = \left[\begin{array}{c}
		\widetilde{I}\\
		I''
		\end{array}\right][w] = \left[\begin{array}{c}
		\widetilde{I}(w)\\
		I''(w)\end{array}\right] \in S'\oplus V''
		\end{equation*}
		for all $w\in W$. Therefore 
		$$
		A(S'\oplus V''), B(S'\oplus V''), I(W)\subset S'\oplus V'',
		$$
		which contradicts the stability condition. Therefore, if $(A,B,I,J)$ is stable, it satisfies the conditions $(i)$ and $(ii)$ above.\\
		\indent Now suppose that $(A,B,I,J)$ satisfies the conditions $(i)$ and $(ii)$. One can check that $(A,B,I,J)$ is a stable data. Indeed, let $S=S'\oplus S''\subset V$ such that $A(S)$, $B(S)$, $I(W)\subset S$ and $(s',s'')\in S$. Thus,
		\begin{equation*}
		A(s',s'') = \left[\begin{array}{cc}
		A' & \widetilde{A}\\
		0  & A''
		\end{array}\right]\left[\begin{array}{c}
		s' \\ s''\end{array}\right] = \left[\begin{array}{cc}
		A'(s') + \widetilde{A}(s'') & A''(s'')\end{array}\right] \in S'\oplus S''
		\end{equation*}
		for all $(s',s'')\in S'\oplus S''$, which means that $A'(S')+\widetilde{A}(S'')\subset S'$ and $A''(S'')\subset S''$. Analogously, $B'(S')+B''(S'')\subset S'$ and $B''(S'')\subset S''$. Moreover, given $w\in W$ 
		\begin{equation*}
		I(w) = \left[\begin{array}{c}
		\widetilde{I}\\
		I''
		\end{array}\right][w] = \left[\begin{array}{c}
		\widetilde{I}(w)\\
		I''(w)
		\end{array}\right]\in S'\oplus S''
		\end{equation*}
		However, since the ADHM data $(A'',B'',I'',J'')$ is stable, $S''=V''$. Thus, $S'$ is a subspace which satisfies the conditions in \eqref{prop:minilemmainside}. It follows from $(ii)$ that $S'=0$ or $S'=V'$. If $S'=0$, $\widetilde{A}(V'')$, $\widetilde{B}(V'')$, $\widetilde{I}(W)\subset\{0\}$ and $\widetilde{A}=\widetilde{B}=\widetilde{I}=0$, which contradicts the condition $(i)$. Therefore, $S'=V'$ and $S=V$, i.e., the ADHM data $(A,B,I,J)$ is in fact stable if and only if $(A,B,I,J)$ satisfies the conditions $(i)$ and $(ii)$ above.

In order to complete the proof, it is enough to show that there exists a nontrivial solution for the equation \eqref{eq:lem-enhADHMeqobt02} which satisfies the conditions $(i)$ and $(ii)$. First choose a basis $\{v_1,\ldots,v_{c'}\}$ for $V'$ and let $A'$ and $B'$ be two diagonal matrix,
		\begin{align}
		A'=\mbox{diag}(\alpha_1,\ldots,\alpha_{c'}),\quad B'=\mbox{diag}(\beta_1,\ldots,\beta_{c'}) \nonumber,
		\end{align} 
		such that 
		\begin{equation*}
		\left\{\begin{array}{c}
		\alpha_i\neq\alpha_j\mbox{, for}i\neq j\\
		\beta_i\neq\beta_j\mbox{, for }i\neq j
		\end{array}\right. .
		\end{equation*}
		Let $\widetilde{I}:W\longrightarrow V'$ be a linear map of rank is $1$ and $Im(\widetilde{I})$ is generated by the vector 
		$$
		v = \sum_{i=1}^{c'}v_i.
		$$
		Therefore, $\{v, B'v, \ldots, B^{\prime c'-1}v\}$ is a basis for $V'$, otherwise, there would exist a nontrivial linear relation of the form 
		$$
		\sum_{i=0}^{c'-1}x_iB^{\prime i}v=0.
		$$
		Thus, for $B'=\mbox{diag}({\beta_1,\ldots,\beta_{c'}})$, $x_i's$ are a solution for the linear system
		$$
		\sum_{i=1}^{c'}\beta^i_jx_i=0\mbox{, for }j\in\{1,\ldots,c'\},
		$$
		where $B'^0=1_{V'}$. However, the discriminant of the linear system is the Vandermond determinant
		$$
		\Delta(\beta_1,\ldots,\beta_{c'}) = \prod^{c'}_{i<j}(\beta_j-\beta_i) \neq 0,
		$$
		since $\beta_i\neq\beta_j$ for all $i\neq j$. Thus, $x_i=0$, for all $i\in\{1,\ldots,c'\}$, leading to a contradiction. In conclusion, $\{v,B'v,\ldots,B^{\prime c-1}v\}$ is a basis for $V'$. In particular, there is no subspace $0\subset S' \subset V'$ preserved by $B'$ and contained in the image of $\widetilde{I}$. Analogously, there is no subspace $0\subset S' \subset V'$ preserved by $A'$ and contained in the image of $\widetilde{I}$ as well.\\
		\indent Fixing $A'$, $B'$ and $\widetilde{I}$ as above, the equation \eqref{eq:lem-enhADHMeqobt02} is a linear system with $c'(c-c')$ equations in the $2c'(c-c')$ variables $\widetilde{A}$, $\widetilde{B}$. Such system a has a $c'(c-c')$-dimensional space of solutions. Any nontrivial solution determines a stable ADHM datum $(A,B,I,J)$. 
	\end{proof}

\begin{lem} \label{connect}
$\mathcal{N}(r,c,1)$ is connected for any $r\ge1$ and $c\ge2$.
\end{lem}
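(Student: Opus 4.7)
The plan is to deduce connectedness of $\mathcal{N}(r,c,1)$ from Nakajima's theorem that the ordinary framed moduli space $\mathcal{M}(r,c-1)$ is irreducible (valid since $c-1\geq 1$), exploiting the surjective forgetful morphism $\mathfrak{q}\colon\mathcal{N}(r,c,1)\to\mathcal{M}(r,c-1)$ established in Lemma \ref{lem:q-sobrejetiva}. The key reduction is that for $c'=1$ the linear maps $A',B'\in\End(V')$ collapse to a pair of scalars $(\alpha',\beta')\in\C^2$, so $\mathfrak{q}$ refines to a morphism
\[
\Psi\colon\mathcal{N}(r,c,1)\longrightarrow\mathcal{M}(r,c-1)\times\C^2,\qquad X\longmapsto\bigl([X''],(\alpha',\beta')\bigr),
\]
whose target is irreducible, hence connected. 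Surjectivity of $\Psi$ is a direct consequence of the construction in the proof of Lemma \ref{lem:q-sobrejetiva}: given any $[X'']\in\mathcal{M}(r,c-1)$ and any prescribed $(\alpha',\beta')\in\C^2$, that construction produces a stable enhanced ADHM representation lifting the chosen data.

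To show the fibers of $\Psi$ are connected, I would employ the block-form parametrization from the same proof. After normalizing the injection $F$ to the standard form $(1_{V'},0)^T$, the fiber over $\bigl([X''],(\alpha',\beta')\bigr)$ is identified with the quotient of the punctured linear variety
\[
K^{\circ}:=\bigl\{(\widetilde{A},\widetilde{B},\widetilde{I})\in\Hom(V'',V')^{\oplus 2}\oplus\Hom(W,V')\ :\ \widetilde{A}(B''-\beta')-\widetilde{B}(A''-\alpha')+\widetilde{I}J''=0\bigr\}\setminus\{0\}
\]
by the residual gauge group $\C^{*}\ltimes\Hom(V'',V')$ inherited from the stabilizer of the chosen normalization; this group is connected. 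The subspace $K$ is cut out by a single vector linear equation of rank at most $c-1$, so $\dim K\geq(2(c-1)+r)-(c-1)=c+r-1\geq 2$ by the hypotheses $c\geq 2$, $r\geq 1$. Consequently $K^{\circ}$ is irreducible, hence connected, and a quotient of a connected variety by a connected algebraic group is again connected.

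Finally I would conclude by exhibiting a connected dense open subset of $\mathcal{N}(r,c,1)$. Over a dense open subset of the base $\mathcal{M}(r,c-1)\times\C^2$ the fibers of $\Psi$ attain the minimal possible dimension $r-1$ and carry a natural $\mathbb{P}^{r-1}$-bundle structure, so the preimage there is irreducible; since a variety that contains an irreducible dense open subset is connected, the conclusion follows. The main obstacle is precisely this last step: because $\Psi$ is not a priori proper, fibrewise connectedness together with connectedness of the base does not automatically yield connectedness of the total space, and one must verify that the locus on which $\Psi$ has the generic fiber dimension is dense in $\mathcal{M}(r,c-1)\times\C^2$. This density will be established from the explicit form of the linear equation defining $K$, exploiting the stability of the quotient datum $X''$ to control the rank of that equation on a dense open subset of the base.
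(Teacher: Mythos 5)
Your proposal follows the same route as the paper's own proof: both rest on the surjection $\mathfrak{q}\colon\mathcal{N}(r,c,1)\to\mathcal{M}(r,c-1)$ of Lemma \ref{lem:q-sobrejetiva}, the irreducibility (hence connectedness) of $\mathcal{M}(r,c-1)$, and the identification of the fibre with (a quotient of) the punctured kernel of the linear map $L(\widetilde{A},\widetilde{B},\widetilde{I})=A'\widetilde{B}+\widetilde{A}B''-B'\widetilde{A}-\widetilde{B}A''+\widetilde{I}J''$, together with the dimension count $\dim\ker L\ge 2(c-1)+r-(c-1)=c+r-1\ge 2$, which is exactly the paper's. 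You are in fact more careful than the paper on two points: you remember that the fibre is the quotient of this punctured linear space by the residual gauge group $\C^{*}\ltimes\Hom(V'',V')$ (the paper suppresses this quotient, harmlessly, since the group is connected), and you observe that for a non-proper morphism, connectedness of the base together with connectedness of all fibres does not by itself yield connectedness of the total space. The paper simply asserts that it ``is enough to argue that the fibres of $\mathfrak{q}$ are always connected,'' so this scruple applies to the published argument as well.

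The one substantive gap is that your proposed repair of this last step is left unexecuted, and as formulated it would not quite suffice: density in $\mathcal{M}(r,c-1)\times\C^{2}$ of the locus where the fibre has minimal dimension does not imply that its preimage is dense in $\mathcal{N}(r,c,1)$ --- a priori a whole connected component could lie over the complementary locus where the rank of $L$ drops. To close the argument one needs either (a) an openness (or flatness, or local-section) statement for $\mathfrak{q}$, since an open surjection with connected fibres onto a connected base has connected source, which would make the density discussion unnecessary; or (b) a codimension bound on the rank-drop locus in the base, combined with the purity of dimension of $\mathcal{N}(r,c,1)$ from Theorem \ref{teo:N-eh-suave}, to exclude components sitting over it. Until one of these is supplied, your argument is essentially as complete as the paper's, with the merit of having located precisely where the remaining work lies.
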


\begin{proof}
According to Lemma \ref{lem:q-sobrejetiva} above, the morphism $\mathfrak{q}:\mathcal{N}(r,c,1) \to \mathcal{M}(r,c-1)$ is surjective. Since $\mathcal{M}^{st}(r,c-1)$ is an irreducible, nonsingular variety cf. \cite[Theorem 3.3]{A01-07}, it is also connected. Therefore, it is enough to argue that the fibres of $\mathfrak{q}$ are always connected.

Indeed, note in the proof of Lemma \ref{lem:q-sobrejetiva} that $\mathfrak{q}^{-1}(A'',B'',I'',J'')$ are given by morphisms 
\begin{equation*}
\widetilde{A},\mbox{ } \widetilde{B}\in Hom(V'',V'),\mbox{ } \widetilde{I}\in Hom(W,V'),\mbox{ }
\quad \mbox{and}\quad A',B'\in End(V').
\end{equation*}
satisfying the equations on the right hand side of (\ref{eq:lem-enhADHMeqobt02}), and the open conditions (i) and (ii) above equation (\ref{prop:minilemmainside}). When $\dim V'=1$, the only proper subspace of $V'$ is the trivial one, hence conditions (i) and (ii) are redundant; it follows that  
$$ \mathfrak{q}^{-1}(A'',B'',I'',J'') = (\ker L \setminus\{0\} ) \times \mathbb{C}^2 , $$
where $L$ is the linear operator $L : Hom(V'',V')^{\oplus 2} \oplus Hom(W,V') \to Hom(V'',V')$ given by
$$ L(\widetilde{A},\widetilde{B},\widetilde{I}) := 
A'\widetilde{B} + \widetilde{A}B''- B'\widetilde{A} - \widetilde{B}A'' + \widetilde{I}J'' . $$
Note that 
$$ \dim\ker L \ge 2(c-1) + r - (c-1) = c + r - 1 \ge 2 $$
so that $\ker L \setminus\{0\}$ is connected. 
\end{proof}


\section{Smoothness} \label{smoothness}

In this section we will prove that the moduli space $\mathcal{N}(r,c,1)$ is smooth and has complex dimension $(2rc-r+1)$. In other to prove this, consider the following complex
	\begin{equation}
	\label{eq:complexo-cr-c'}
	\xymatrix{\mathcal{C}(X): 
		& \txt{$End(V)$\\ $\oplus$ \\ $End(V')$} \ar[r]^-{d_0} 
		& \txt{$End(V)^{\oplus^2}$\\ $\oplus$ \\ $Hom(W,V)$\\ $\oplus$ \\ $Hom(V,W)$\\ $\oplus$ \\ $End(V')^{\oplus^2}$\\ $\oplus$ \\ $Hom (V',V)$}  \ar[r]^-{d_1}
		& \txt{$End(V)$\\ $\oplus$ \\ $Hom(V',V)^{\oplus^2}$\\ $\oplus$ \\ $Hom(V',W)$\\ $\oplus$ \\ $End(V')$} \ar[r]^-{d_2}
		& \txt{$Hom(V',V)$}
	}
	\end{equation}
	with 
	\begin{equation*}
	\begin{array}{rcl}
	d_0(h,h')                & = & ([h,A], [h,B], hI, -Jh, [h',A'], [h',B'], hF-Fh')\\
	d_1(a,b,i,j,a',b',f)     & = & ([a,B] + [A,b] + Ij + iJ, Af+aF-Fa'-fA', \\
	&   & Bf+bF-Fb'-fB', jF +Jf, [a',B']+[A',b'])\\
	d_2(c_1,c_2,c_3,c_4,c_5) & = & c_1F + Bc_2 - c_2B' +c_3A'-Ac_3-Ic_4-Fc_5.
	\end{array}
	\end{equation*}
Note that $d_0$ is the linearization of the free action \eqref{eq:acaolivre}, while $d_1$ is the linearization of the equations in \eqref{eq:quiverADHMenh}. 

\begin{thm}\label{teo:rep-ref-est-tipo-num-rcc'}
Let $\dim(W)=r$, $\dim(V)=c$ and $\dim(V')=c'$, and let $X = (V, V', A, B, I, J, A', B', F)$ be a stable enhanced ADHM datum. Then 
\begin{equation*}
H^0(\mathcal{C}(X)) = H^3(\mathcal{C}(X)) =  0,
\end{equation*}
where $\mathcal{C}(X)$ is the complex (\ref{eq:complexo-cr-c'}).
\end{thm}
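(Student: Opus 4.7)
The plan is to handle $H^0$ and $H^3$ separately, and in both cases the vanishing should follow quickly from the two halves of the stability condition as reformulated in Lemma~\ref{lem:thetastab-equiv-stab}: the injectivity of $F$ from $(S.1)$ and the stability of the underlying ADHM datum $(A,B,I,J)$ from $(S.2)$.

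For $H^0=\ker d_0$, I would simply unfold the definition. Given $(h,h')\in\ker d_0$, the relations $[h,A]=[h,B]=0$ and $hI=0$ show that $\ker h\subset V$ is an $A$- and $B$-invariant subspace containing $\im I$. Stability condition $(S.2)$ then forces $\ker h=V$, i.e.\ $h=0$, and the remaining equation $hF=Fh'$ combined with the injectivity of $F$ from $(S.1)$ yields $h'=0$. This is essentially the infinitesimal version of the freeness of the $\mathcal{G}$-action already established in Proposition~\ref{prop:acaolivre}.

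For $H^3=\operatorname{coker} d_2$, I would dualise and show that the adjoint
\[
d_2^{*}\colon \Hom(V,V')\longrightarrow \End(V)\oplus\Hom(V,V')^{\oplus 2}\oplus\Hom(W,V')\oplus\End(V')
\]
(with respect to the natural trace pairings) is injective. A short sign-tracking computation shows that the $\End(V)$-component of $d_2^{*}(\phi)$ is the composition $F\phi$, so $d_2^{*}(\phi)=0$ already implies $F\phi=0$, whence $\phi=0$ by the injectivity of $F$.

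The argument is nearly formal, so I do not anticipate a substantive obstacle. The only point that needs care is the bookkeeping of signs and transposes when writing out $d_2^{*}$, since one must be certain that the $\End(V)$-entry really reads $F\phi$ (rather than $\phi F$ landing in $\End(V')$) in order to close the argument via $(S.1)$.
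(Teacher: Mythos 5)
Your proposal is correct, but it follows a genuinely different route from the paper. You compute $H^0=\ker d_0$ and $H^3=\operatorname{coker} d_2$ directly: for $H^0$ you observe that $\ker h$ is an $A$- and $B$-invariant subspace containing $\operatorname{im} I$, so $(S.2)$ gives $h=0$ and then $Fh'=hF=0$ with $(S.1)$ gives $h'=0$; for $H^3$ you dualise $d_2$ under the trace pairing and note that the component of $d_2^{*}(\phi)$ dual to the $c_1\in\End(V)$ slot is $F\phi$ (the term $\phi F\in\End(V')$ dual to $c_5$ would not suffice, so your caution about which composition appears is warranted), whence $d_2^{*}(\phi)=0$ forces $\phi=0$ by injectivity of $F$. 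The paper instead realises $\mathcal{C}(X)[1]$ as the cone of a morphism $\rho:\mathcal{C}(\mathcal{A})\oplus\mathcal{C}(\mathcal{B})\to\mathcal{C}(\mathcal{A},\mathcal{B})$ of auxiliary complexes, and extracts the vanishing of $H^0$ and $H^3$ from the resulting long exact sequence together with the classical facts $H^0(\mathcal{C}(\mathcal{A}))=H^2(\mathcal{C}(\mathcal{A}))=0$, the injectivity of $H^0(\rho)$, and the vanishing of $H^2(\mathcal{C}(\mathcal{A},\mathcal{B}))$ (which itself requires a stability argument with $\ker\varphi$ rather than a one-line use of $(S.1)$). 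Your argument is shorter and self-contained for the statement at hand; what the paper's heavier machinery buys is reuse: the same exact triangle is the main tool in Remark~\ref{rmk about h2} and Theorem~\ref{teo:N-eh-suave}, where the segment of the long exact sequence between $H^1$ and $H^2$ is needed to prove that the obstruction space $H^2(\mathcal{C}(X))$ vanishes when $c'=1$, something a direct kernel/cokernel computation does not give.
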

\begin{proof} 
Consider the shifted complex $\mathcal{C}(X)[1]^i := \mathcal{C}(X)^{i+1}$ with $(d_{i})_{\mathcal{C}(X)[1]}:=(-1)d_{i+1}$; it is given by:
\begin{equation*} \xymatrix{
\mathcal{C}(X)[1]:
			& \txt{$End(V)^{\oplus^2}$ \\ $\oplus$\\ $Hom(W,V)$\\ $\oplus$\\ $Hom(V,W)$\\ $\oplus$\\ $End(V')^{\oplus^2}$\\ $\oplus$\\ $Hom(V',V)$} 
			\ar[r]^-{d_0}
			& \txt{$End(V)$\\ $\oplus$\\ $Hom(V',V)^{\oplus^2}$\\ $\oplus $\\ $Hom(V,W)$\\ $\oplus$\\ $End(V')$\\} \ar[r]^-{d_1}
			& \txt{$Hom(V',V)$}
} \end{equation*}
		with 
		\begin{equation*}
		\begin{array}{rcl}
		d_0(a,b,i,j,a',b',f) & = & -([a,B] + [A,b] + Ij + iJ, Af+aF-Fa'-fA', \\
		&   &Bf+bF-Fb'-fB', jF +Jf, [a',B']+[A',b'])\\
		d_1(c_1,c_2,c_3,c_4,c_5) & = & -c_1F - Bc_2 + c_2B' -c_3A'+Ac_3+Ic_4+Fc_5.
		\end{array}
		\end{equation*}
		
Consider also the following complexes:
		\begin{equation*}
		\label{eq:ca-complex-deformation}
		\xymatrix{\mathcal{C}(\mathcal{A}):
			& End(V) \ar[r]^-{d_0}
			& \txt{$End(V)^{\oplus^2}$\\ $\oplus$\\ $Hom(W,V)$\\ $\oplus$\\ $Hom(V,W)$\\}\ar[r]^-{d_1}
			& End(V)
		}
		\end{equation*}
		where 
		\begin{equation*}
		\begin{array}{rcl}
		d_0(a)       & = & ([h,A], [h,B], hI, -Jh)\\
		d_1(a,b,i,j) & = & [a,B]+[A,b]+Ij+iJ;
		\end{array}
		\end{equation*}
		
		\begin{equation}
		\label{eq:cb-complex}
		\xymatrix{\mathcal{C}(\mathcal{B}):
			& End(V') \ar[r]^-{d_0}
			& End(V')
		}
		\end{equation}
		where 
		\begin{equation*}
		\begin{array}{rcl}
		d_0(h') & = & [h',B];
		\end{array}
		\end{equation*}
		and
		\begin{equation*}
		\xymatrix{\mathcal{C}(\mathcal{A},\mathcal{B}):
			& \txt{$ Hom(V',V)$\\ $\oplus$\\ $End(V')$} \ar[r]^-{d_0}
			& \txt{$Hom(V',V)^{\oplus^2}$\\ $\oplus$\\ $Hom(V',W)$\\ $\oplus$\\ $End(V')$} \ar[r]^-{d_1}
			& Hom(V',V)}
		\end{equation*}
		where 
		\begin{equation*}
		\begin{array}{lcl}
		d_0(f,a') & = & (-Af+fA'-Fa',-Bf+fB',-Jf,[a',B'])\\
		d_1(c_2,c_3,c_4,c_5) & = & - Bc_2 + c_2B' -c_3A'+Ac_3+Ic_4+Fc_5.
		\end{array}
		\end{equation*}
Define the morphism of complexes
$$ \rho:\mathcal{C}(\mathcal{A})\oplus\mathcal{C}(\mathcal{B})\longrightarrow \mathcal{C}(\mathcal{A},\mathcal{B}) $$
by 
\begin{equation*}
\begin{array}{lcl}
\rho_0(h,h') & = & (hF-Fh', [h',A'])\\
\rho_1(a,b,i,j,b') & = & (aF, bF-Fb', jF, [A',b'])\\
\rho_2(c_1) & = & c_1F
\end{array}
\end{equation*}
We assert that the cone of the map $\rho$ is equivalent to $\mathcal{C}(X)[1]$. In fact, denote this cone by $(C,d_C)$; it follows that
		\begin{equation*}
		\left\{\begin{array}{c}
		C^i = \mathcal{C}(\mathcal{A},\mathcal{B})^i\oplus(\mathcal{C}(\mathcal{A})^{i+1}\oplus\mathcal{C}(\mathcal{B})^{i+1})\\
		(d_i)_C = ((d_i)_{\mathcal{C}(\mathcal{A},\mathcal{B})^i} -\rho_{i+1}, -(d_{i+1})_{(\mathcal{C}(\mathcal{A})^{i+1}\oplus\mathcal{C}(\mathcal{B})^{i+1})} )
		\end{array}\right. .
		\end{equation*}
		Therefore,
		\begin{equation*}
		\begin{array}{ccl}
		C^0     & = & Hom(V',V)\oplus End(V')\oplus End(V)^{\oplus^2}\oplus Hom(W,V)\oplus \\
		        &   & \oplus Hom(V,W)\oplus End(V')\\
		C^1     & = & Hom(V',V)^{\oplus^2}\oplus Hom(V,W)\oplus End(V')\oplus End(V)\\
		C^2     & = & Hom(V',V)\\
		&   & \\
		\end{array}
		\end{equation*}
		\begin{equation*}
		\begin{array}{ccl}
		(d_0)_C(f,a',a,b,i,j,b')	&	= & (d_0(f,a')-\rho_1(a,b,i,j,b'), -d_1(a,b,i,j,b'))\\
		& = & ((-Af+fA'+Fa',-Bf+fB',-Jf,-[a',B'])-\\
		& = & (aF,bF-Fb',jF,[A',b']), -[a,B]-[A,b]-Ij-iJ)\\
		& = & (-Af+fA'+Fa'-aF,-Bf+fB'-bF+Fb',\\
		&   & -Jf-jF,-[A',b']-[a,B'],-[a,B]-[A,b]-Ij-iJ)\\
		&   & \\
		(d_1)_C(c_2,c_3,c_4,c_5)	&	= & (d_1(c_2,c_3,c_4,c_5)-\rho_1(c_1), -d_2(c_1))\\
		& = & (-c_1F-Bc_2+c_2B'-c_3A'+Ac_3+Ic_4+Fc_5,0).
		\end{array}
		\end{equation*}
		Hence the cone of the map $\rho$ is equivalent to $\mathcal{C}(X[1])$. So, one can obtain the following exact triangle 
		\begin{equation}\label{eq:seqexata01}
		\xymatrix{\mathcal{C}(X) \ar[r] & \mathcal{C}(\mathcal{A})\oplus\mathcal{C}(\mathcal{B}) \ar[r]^-{\rho} & \mathcal{C}(\mathcal{A},\mathcal{B})
		}
		\end{equation}

Since $\mathcal{C}(\mathcal{A})$ is just the deformation complex for the usual ADHM equation, it is well known that 
\begin{equation}\label{eq:H0-H2-de-cA-nulo}
H^0(\mathcal{C}(\mathcal{A}))=H^2(\mathcal{C}(\mathcal{A}))=0.
\end{equation}
		Let us prove that $H^2(\mathcal{C}(\mathcal{A},\mathcal{B})) = 0$. In fact, the dual of the differential
		$$
		d_1: \mathcal{C}(\mathcal{A},\mathcal{B})^1 \longrightarrow \mathcal{C}(\mathcal{A},\mathcal{B})^2 
		$$
		is given by
		\begin{equation*}
		\begin{array}{cccl}
		d_1^{\vee}: & Hom(V,V') & \longrightarrow & Hom(V,V')^{\oplus^2}\oplus Hom(W,V')\oplus End(V')\\
		& \varphi   & \longmapsto     & (B'\varphi - \varphi B, \varphi A- A'\varphi, \varphi I, \varphi F)
		\end{array}.
		\end{equation*}
		Suppose that $d_1^{\vee}(\varphi) = 0$. Thus, 
		$$
		B'\varphi-\varphi B = A'\varphi-\varphi A = \varphi I = 0.
		$$
		Therefore, 
		$$
		Im(I), A(\ker(\varphi)), B(\ker(\varphi)) \subseteq \ker(\varphi).
		$$
		The fact that $Im(I)\subseteq\ker(\varphi)$ is trivial. If $v\in B(\ker(\varphi))$, then there exists $w\in\ker(\varphi)$ such that $v = Bw$. Then
		$$
		\varphi(v) = \varphi(B(w)) = B'(\varphi(w)) = B'(0) = 0.
		$$
		Therefore $v\in\ker(\varphi)$. Hence, $B(\ker(\varphi))\subseteq\ker(\varphi)$. One can prove that $A(\ker(\varphi))\subseteq\ker(\varphi)$ analogously. It follows from the stability of $x$ that $\ker(\varphi) = 0$ or $\ker(\varphi) = V'$. If $\ker(\varphi) = 0$, then $I\equiv 0$. This lead us to a contradiction. Therefore, $\varphi = 0$, i.e., $d_1^{\vee}$ it is injective and hence $d_1$ it is surjective. It follows that 
\begin{equation*}
H^2(\mathcal{C}(\mathcal{A},\mathcal{B})) = Hom(V',V)/ Im(d_1) = 0
\end{equation*}

Next, let us prove that $H^0(\rho)$ is injective. Since $H^0(\mathcal{C}(\mathcal{A}))=0$,  we have that 
\begin{equation*}
H^0(\mathcal{C}(\mathcal{A})\oplus\mathcal{C}(\mathcal{B})) = H^0(\mathcal{C}(\mathcal{B})).
\end{equation*}
Therefore,
		\begin{equation*}
		\begin{array}{cccl}
		H^0(\rho): & H^0(\mathcal{C}(\mathcal{B})) &\longrightarrow & H^0(\mathcal{C}(\mathcal{A},\mathcal{B})) \\\
		& \overline{h'} & \longmapsto & (\overline{-Fh'},\overline{[h',A]})
		\end{array}
		\end{equation*}
		where $\overline{x}\in H^0(\mathcal{C})$ denotes the equivalence class of $x\in \mathcal{C}$, with $\mathcal{C}\in\{\mathcal{C}(\mathcal{A}), \mathcal{C}(\mathcal{B}), \mathcal{C}(\mathcal{A},\mathcal{B})\}$.
		Suppose that $H^0(\rho)(\overline{h'}) = 0$. Then, $\overline{-Fh'}=0$. Since $F$ it is injective, it is true that $\overline{h'}=\overline{0}$. Hence $H^0\rho$ it is injective.\\
		
Finally, it follows from equation (\ref{eq:H0-H2-de-cA-nulo}) that the exact sequence of cohomologies of (\ref{eq:seqexata01}) is given by 
		\begin{equation}\label{eq:seqexatalonga01}
		\xymatrix{
			0\ar[r] & H^0(\mathcal{C}(R)) \ar[r]^-{\delta} & H^0(\mathcal{C}(B)) \ar[r]^-{H^0(\rho)} & H^0(\mathcal{C}(\mathcal{A},\mathcal{B})) \ar[r] & \ldots\\
			\ldots \ar[r] &\txt{$\underbrace{H^2(\mathcal{C}(B))}_{=0}$} \ar[r] & \txt{$\underbrace{H^2(\mathcal{C}(\mathcal{A},\mathcal{B}))}_{=0}$} \ar[r]^-{\gamma} & H^3(\mathcal{C}(R)) \ar[r] & 0.
		}
		\end{equation}
The claim in the Theorem follows immediately.
	\end{proof}

\begin{obs} \label{rmk about h2}
Note that the exact sequence of cohomologies (\ref{eq:seqexata01}) reduces to
$$ 0 \to H^0(\mathcal{C}(\mathcal{B})) \stackrel{H^0(\rho)}{\longrightarrow} H^0(\mathcal{C}(\mathcal{A},\mathcal{B})) \to
H^1(\mathcal{C}(X)) \to $$
$$ \to H^1(\mathcal{C}(\mathcal{A})\oplus \mathcal{C}(\mathcal{B})) \stackrel{H^1(\rho)}{\longrightarrow}
H^1(\mathcal{C}(\mathcal{A},\mathcal{B})) \to H^2(\mathcal{C}(X)) \to 0 $$
The cohomology group $H^2(\mathcal{C}(X))$ measures the obstruction the smoothness of the variety $\mathcal{N}(r,c,c')$, whose Zariski tangent space is precisely $H^1(\mathcal{C}(X))$.
\end{obs}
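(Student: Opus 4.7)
The Remark contains two separate claims, both of which are straightforward consequences of what was established in the proof of Theorem \ref{teo:rep-ref-est-tipo-num-rcc'}: first, a purely homological identity (the reduced six-term exact sequence) and, second, a deformation-theoretic interpretation of $H^{1}$ and $H^{2}$ of $\mathcal{C}(X)$.

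For the six-term sequence, the plan is just bookkeeping on the long exact sequence of cohomology associated to the distinguished triangle (\ref{eq:seqexata01}). I would first write the full long exact sequence
\[
0 \to H^0(\mathcal{C}(X)) \to H^0(\mathcal{C}(\mathcal{A}))\oplus H^0(\mathcal{C}(\mathcal{B})) \to H^0(\mathcal{C}(\mathcal{A},\mathcal{B})) \to H^1(\mathcal{C}(X)) \to \cdots ,
\]
and then kill the superfluous terms using the vanishings already available: $H^0(\mathcal{C}(X))=0$ by the theorem itself; $H^0(\mathcal{C}(\mathcal{A}))=H^2(\mathcal{C}(\mathcal{A}))=0$, which are the classical deformation statements for stable ADHM data invoked in (\ref{eq:H0-H2-de-cA-nulo}); $H^2(\mathcal{C}(\mathcal{A},\mathcal{B}))=0$, which was proved inside the theorem by dualizing $d_1$ and using stability to force $\ker\varphi=V$; and $H^2(\mathcal{C}(\mathcal{B}))=0$, which is trivial because $\mathcal{C}(\mathcal{B})$ is concentrated in degrees $0$ and $1$. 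Inserting these collapses the long exact sequence to exactly the six-term one stated in the Remark.

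For the interpretation as tangent and obstruction, this is the standard quiver/GIT deformation theory specialized to $\mathcal{C}(X)$. The author has already noted that $d_0$ is the linearization of the $\mathcal{G}$-action (\ref{eq:acaolivre}) and $d_1$ is the linearization of the enhanced ADHM relations (\ref{eq:quiverADHMenh}) cutting out $\mathbb{X}_0\subset\mathbb{X}$. Since $\mathcal{N}(r,c,c')$ is realized as the GIT quotient of the stable locus of $\mathbb{X}_0$ by $\mathcal{G}$, and since $\mathcal{G}$ acts freely on stable data by Proposition \ref{prop:acaolivre}, a standard argument identifies the Zariski tangent space at $[X]$ with $\ker d_1/\operatorname{im} d_0 = H^1(\mathcal{C}(X))$. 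The identification of $H^2(\mathcal{C}(X))$ with the obstruction space proceeds in the usual Atiyah--Hitchin--Singer/Kuranishi fashion: given a first-order deformation lying in $\ker d_1$, attempting to extend it to second order produces a quadratic expression in the deformation variables that is automatically a cocycle in $\mathcal{C}(X)^2$ modulo $\operatorname{im} d_1$, and the resulting class in $H^2(\mathcal{C}(X))$ is precisely the obstruction.

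There is no real obstacle here; the only step that requires mild care, in a scrupulous write-up, is the obstruction interpretation, i.e., verifying that the quadratic term of the deformed enhanced ADHM equations produces a well-defined class in $H^2(\mathcal{C}(X))$. This is formally identical to the classical ADHM case, and is ultimately what motivates proving vanishing of $H^2(\mathcal{C}(X))$ in Section \ref{smoothness} in order to establish smoothness of $\mathcal{N}(r,c,1)$.
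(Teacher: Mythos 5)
Your proposal is correct and follows essentially the same route as the paper: the six-term sequence is obtained by feeding the vanishings $H^0(\mathcal{C}(X))=0$, $H^0(\mathcal{C}(\mathcal{A}))=H^2(\mathcal{C}(\mathcal{A}))=0$, and $H^2(\mathcal{C}(\mathcal{B}))=0$ (the latter trivially, since that complex sits in degrees $0$ and $1$) into the long exact sequence of the triangle (\ref{eq:seqexata01}), exactly as established in the proof of Theorem \ref{teo:rep-ref-est-tipo-num-rcc'}. The tangent/obstruction interpretation is, as you say, the standard consequence of $d_0$ and $d_1$ being the linearizations of the free $\mathcal{G}$-action and of the enhanced ADHM relations, which is all the paper itself asserts.
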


We now focus on the particular case $c'=\dim(V')=1$. The equation $[A',B'] = 0$ becomes vacuous, hence it can omit this equation from the set of enhanced ADHM equation, obtaining the equations
\begin{equation*} \begin{array}{ccccc}
\label{eq:ADHMenh02}
[A,B]+IJ = 0, & AF-FA'=0, & BF-FB'=0, &JF=0.
\end{array} \end{equation*}
The deformation complex (\ref{eq:complexo-cr-c'}) simplifies to the following
\begin{equation}\label{def-cpx2}
\xymatrix{ \mathcal{C}(X): 
& \txt{$End(V)$\\ $\oplus$ \\ $End(V')$} \ar[r]^-{d_0} 
& \txt{$End(V)^{\oplus^2}$\\ $\oplus$ \\ $Hom(W,V)$\\ $\oplus$ \\ $Hom(V,W)$\\ $\oplus$ \\ $End(V')^{\oplus^2}$\\ $\oplus$ \\ $Hom (V',V)$}  \ar[r]^-{d_1}
& \txt{$End(V)$\\ $\oplus$ \\ $Hom(V',V)^{\oplus^2}$\\ $\oplus$ \\ $Hom(V',W)$} \ar[r]^-{d_2}
& \txt{$Hom(V',V)$}
} \end{equation}
with maps given by
\begin{equation*} \begin{array}{rcl}
d_0(h,h')                & = & ([h,A], [h,B], hI, -Jh, [h',A'], [h',B'], aF-Fa')\\
d_1(a,b,i,j,a',b',f)     & = & ([a,B] + [A,b] + Ij + iJ, Af+aF-Fa'-fA', \\
&   & Bf+bF-Fb'-fB', jF +Jf)\\
d_2(c_1,c_2,c_3,c_4) & = & c_1F + Bc_2 - c_2B' +c_3A'-Ac_3-Ic_4.
\end{array} \end{equation*}

We are finally in position to prove the main Theorem of this section.

\begin{thm} \label{teo:N-eh-suave}
 Let $\dim(W) = r$, $\dim(V)=c$ and $\dim(V')=1$, and let $X = (A, B, I, J, A', B', F)$ be a stable enhanced ADHM datum which satisfies the enhanced ADHM equations. The moduli space $\mathcal{N}(r,c,1)$ is a non-singular, quasi-projective variety of dimension is $2rc-r+1$. 
\end{thm}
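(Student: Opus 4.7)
The plan is to apply the deformation-theoretic framework developed in Theorem \ref{teo:rep-ref-est-tipo-num-rcc'} and Remark \ref{rmk about h2}. At a stable point $[X] \in \mathcal{N}(r,c,1)$, the Zariski tangent space equals $H^1(\mathcal{C}(X))$ for the complex \eqref{def-cpx2}, and smoothness at $[X]$ is equivalent to the vanishing of the obstruction space $H^2(\mathcal{C}(X))$. Since Theorem \ref{teo:rep-ref-est-tipo-num-rcc'} already provides $H^0(\mathcal{C}(X)) = H^3(\mathcal{C}(X)) = 0$, the task reduces to proving $H^2(\mathcal{C}(X)) = 0$; granted this, the local dimension is
$$\dim_{[X]}\mathcal{N}(r,c,1) = \dim H^1(\mathcal{C}(X)) = -\chi(\mathcal{C}(X)).$$

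The Euler-characteristic computation is a direct term-by-term count from \eqref{def-cpx2}: with $\dim V = c$, $\dim V' = 1$, $\dim W = r$, the alternating sum gives
$$\chi(\mathcal{C}(X)) = (c^2+1) - (2c^2+2rc+c+2) + (c^2+2c+r) - c = r - 1 - 2rc,$$
so $-\chi(\mathcal{C}(X)) = 2rc - r + 1$, matching the claimed dimension. Quasi-projectivity of $\mathcal{N}(r,c,1)$ is automatic from the GIT construction of Section \ref{section:moduli-space}.

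The main remaining step is the vanishing $H^2(\mathcal{C}(X)) = 0$, which I plan to extract from the long exact sequence of Remark \ref{rmk about h2},
$$H^1(\mathcal{C}(\mathcal{A}) \oplus \mathcal{C}(\mathcal{B})) \xrightarrow{H^1(\rho)} H^1(\mathcal{C}(\mathcal{A},\mathcal{B})) \longrightarrow H^2(\mathcal{C}(X)) \longrightarrow 0,$$
by proving surjectivity of $H^1(\rho)$. The hypothesis $c'=1$ is crucial here: since $\dim V' = 1$, the complex $\mathcal{C}(\mathcal{B})$ has zero differential, the equation $[A',B']=0$ drops out of $\mathcal{C}(X)$, and the morphism $\rho$ reduces essentially to right-multiplication by $F$ on its $\mathcal{C}(\mathcal{A})$-component. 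I expect the main obstacle to lie precisely here: the surjectivity of $H^1(\rho)$ should be verified by a direct cochain-level computation producing, for any $1$-cocycle of $\mathcal{C}(\mathcal{A},\mathcal{B})$, an explicit lift through $\rho$. The injectivity of $F$ (condition $(S.1)$) together with the stability of the underlying ADHM datum $(A,B,I,J)$ (condition $(S.2)$) should play the decisive roles, in close analogy with the argument used in Theorem \ref{teo:rep-ref-est-tipo-num-rcc'} to establish $H^2(\mathcal{C}(\mathcal{A},\mathcal{B})) = 0$, where the dual of the relevant differential was shown to be injective by forcing $\ker\varphi = V$ via these two conditions.
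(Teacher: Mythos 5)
Your framework is exactly the paper's: identify the tangent space with $H^1(\mathcal{C}(X))$ and the obstruction with $H^2(\mathcal{C}(X))$, use the exact triangle $\mathcal{C}(X)\to\mathcal{C}(\mathcal{A})\oplus\mathcal{C}(\mathcal{B})\to\mathcal{C}(\mathcal{A},\mathcal{B})$ together with $H^0(\mathcal{C}(X))=H^3(\mathcal{C}(X))=0$, reduce everything to the surjectivity of $H^1(\rho)$, and read off the dimension from the Euler characteristic (your count $-\chi=2rc-r+1$ agrees with the paper's). Quasi-projectivity from the GIT construction is also how the paper handles that point.

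However, the decisive step --- surjectivity of $H^1(\rho)$ --- is only announced, not proved, and the sketch you give for it would not close the argument as stated. Using a left inverse $E$ of $F$ (which exists by $(S.1)$) one does get surjectivity of $\rho_1$ and $\rho_2$ at the \emph{cochain} level, e.g.\ $\rho_1(c_2E,c_3E,i,c_4E,0)=(c_2,c_3,c_4,0)$; but such a lift of a $1$-cocycle of $\mathcal{C}(\mathcal{A},\mathcal{B})$ has no reason to be a $1$-cocycle of $\mathcal{C}(\mathcal{A})\oplus\mathcal{C}(\mathcal{B})$, so "an explicit lift through $\rho$" does not immediately yield surjectivity of $Z^1(\rho)$, hence of $H^1(\rho)$. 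The paper closes precisely this gap with a separate argument: for a cocycle $p$, the fibre $\rho_1^{-1}(p)$ is a translate of $\ker\rho_1$, and a dimension count
$$\dim\ker\rho_1+\dim\bigl(Z^1(\mathcal{C}(\mathcal{A}))\oplus Z^1(\mathcal{C}(\mathcal{B}))\bigr)-\dim\bigl(\mathcal{C}(\mathcal{A})^1\oplus\mathcal{C}(\mathcal{B})^1\bigr)=c^2+2c(r-1)-r\ \ge\ 0$$
(together with the commutativity $d_1\circ\rho_1=\rho_2\circ d_1$) forces $\rho_1^{-1}(p)$ to meet the cocycle subspace. You would need to supply this (or an equivalent) argument. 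Note also that the stability condition $(S.2)$ does not enter this surjectivity step at all --- it is consumed earlier, in proving $H^0(\mathcal{C}(\mathcal{A}))=H^2(\mathcal{C}(\mathcal{A}))=0$ and $H^2(\mathcal{C}(\mathcal{A},\mathcal{B}))=0$, which give $H^0(\mathcal{C}(X))=H^3(\mathcal{C}(X))=0$ --- so your expectation that $(S.2)$ plays the decisive role in lifting cocycles is misplaced.
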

	
\begin{proof}
Let $\mathcal{C}(\mathcal{A})$ and $\mathcal{C}(\mathcal{B})$ be the complexes (\ref{eq:ca-complex-deformation}) and (\ref{eq:cb-complex}), respectively. Let $\mathcal{C}(\mathcal{A},\mathcal{B})$ be the complex given by
\begin{equation*}
\xymatrix{\mathcal{C}(\mathcal{A},\mathcal{B}):
& \txt{$ Hom(V',V)$\\ $\oplus$\\ $End(V')$} \ar[r]^-{d_0}
& \txt{$Hom(V',V)^{\oplus^2}$\\ $\oplus$\\ $Hom(V',W)$} \ar[r]^-{d_1}
& Hom(V',V)}
\end{equation*}
with maps given by
\begin{equation*} \begin{array}{lcl}
d_0(f,a') & = & (-Af+fA'-Fa',-Bf+fB',-Jf)\\
d_1(c_2,c_3,c_4) & = & - Bc_2 + c_2B' -c_3A'+Ac_3+Ic_4.
\end{array} \end{equation*}
Consider the morphism of complexes:
$$ \rho:\mathcal{C}(\mathcal{A})\oplus\mathcal{C}(\mathcal{B})\longrightarrow \mathcal{C}(\mathcal{A},\mathcal{B}) $$
given by
\begin{equation*}	\begin{array}{lcl}
\rho_0(a,a') & = & (aF-Fa', [a',A'])\\
\rho_1(a,b,i,j,b') & = & (aF, bF-Fb', jF, 0)\\
\rho_2(c_1) & = & c_1F
\end{array}. \end{equation*}

One can check that the sequence of complexes
\begin{equation}\label{eq:seqexata02}
\xymatrix{ 0\ar[r] & \mathcal{C}(R)\ar[r]& \mathcal{C}(\mathcal{A})\oplus\mathcal{C}(\mathcal{B}) \ar[r]^-{\rho} 
&\mathcal{C}(\mathcal{A},\mathcal{B}) \ar[r] & 0	}
\end{equation}
is exact. Similarly to Theorem \ref{teo:rep-ref-est-tipo-num-rcc'}, one can prove that
$$ H^0(\mathcal{C}(X)) = H^3(\mathcal{C}(X)) = 0, $$ 
therefore the exact sequence of cohomologies associated to (\ref{eq:seqexata02}) reduces to
$$ 0 \to H^0(\mathcal{C}(\mathcal{B})) \stackrel{H^0(\rho)}{\longrightarrow} H^0(\mathcal{C}(\mathcal{A},\mathcal{B})) \to
H^1(\mathcal{C}(X)) \to $$
$$ \to H^1(\mathcal{C}(\mathcal{A})\oplus \mathcal{C}(\mathcal{B})) \stackrel{H^1(\rho)}{\longrightarrow}
H^1(\mathcal{C}(\mathcal{A},\mathcal{B})) \to H^2(\mathcal{C}(X)) \to 0 $$
To complete the proof, it remains to show that $H^1(\rho)$ is a surjective map, which implies that the obstruction space $H^2(\mathcal{C}(R))$ vanishes. 

In order to establish the surjectivity of $H^1(\rho)$, we show that the following induced map is surjective, 
\begin{equation} \label{eq:z1-rho}
Z^1(\rho): Z^1(\mathcal{C}(\mathcal{A}))\oplus Z^1(\mathcal{C}(\mathcal{B})) \longrightarrow Z^1(\mathcal{C}(\mathcal{A},\mathcal{B})),
\end{equation}
where $Z^1(\mathcal{C}(\mathcal{A})):=\ker(d_1)$, with $d_1:\mathcal{C}(\mathcal{A})^1\longrightarrow\mathcal{C}(\mathcal{A})^2$, and similarly for $Z^1(\mathcal{C}(\mathcal{B}))$ and for $Z^1(\mathcal{C}(\mathcal{A},\mathcal{B}))$. Indeed, note that $Z^1(\mathcal{C}(\mathcal{B}))=0$; our strategy is to show that:
\begin{enumerate}
\item the diagram
			\begin{equation*}
			\label{eq:grafico_z1-rho}
			\xymatrix{
				0 \ar[r]  & 
				*\txt{ $Z^1(\mathcal{C}(\mathcal{A}))\oplus Z^1(\mathcal{C}(\mathcal{B}))$ } \ar[dd]^-{\txt{ $Z^1(\rho)$ }} \ar[r]^-{i} &
				*\txt{ $\mathcal{C}(\mathcal{A})^1\oplus\mathcal{C}(\mathcal{B})^1$ } \ar[r]^-{d_1} \ar[dd]^-{\txt{ $\rho_1$ }} &
				*\txt{ $\overbrace{\mathcal{C}(\mathcal{A})^2}^{=End(V)}$ } \ar[r] \ar[dd]^-{\txt{ $\rho_2$ }} & 0\\
				& & & &\\
				0 \ar[r] & 
				*\txt{ $Z^1(\mathcal{C}(\mathcal{A},\mathcal{B}))$ } \ar[r]^-{i} & 
				*\txt{ $\mathcal{C}(\mathcal{A},\mathcal{B})^1$ } \ar[r]^-{d_1} &
				\txt{ $\underbrace{\mathcal{C}(\mathcal{A},\mathcal{B})^2}_{=Hom(V',V)}$ } \ar[r] & 0
			}
			\end{equation*}
			commutes;
			\item the maps $\rho_1$ and $\rho_2$ are surjective;
			\item for all $p\in Z^1(\mathcal{C}(\mathcal{A},\mathcal{B}))$, $\rho^{-1}_1(p)\cap (Z^1(\mathcal{C}(\mathcal{A}))\oplus Z^1(\mathcal{C}(\mathcal{B})))\neq 0$ in $\mathcal{C}(\mathcal{A})^1\oplus\mathcal{C}(\mathcal{B})^1.$
		\end{enumerate}
		
Proof of item 1: The map $Z^1(\rho)$ it is well-defined. Indeed, let $(a,b,i,j,b')\in\ker((d_1)_{\mathcal{C}(\mathcal{A})\oplus\mathcal{C}(\mathcal{B})})$. Thus, 
		$$
		Ij = -[a,B] - [A,b] -iJ.
		$$
		Therefore, 
		\begin{equation*}
		\begin{array}{ccl}
		d_1(\rho_1)(a,b,i,j,b') & = & d_1(aF, bF-Fb', jF, 0)\\
		& = & -BaF +aFB' - bFA' + Fb'A' + AbF - AFb' + IjF\\
		& = & [a,B]F + [A,b]F + iJF +IjF - F[A',b']\\
		& = & ([a,B] + [A,b] + iJ)F + IjF\\
		& = & IjF - IjF = 0.
		\end{array}
		\end{equation*}
		Hence the map $Z^1(\rho)$ is well-defined. Since $\rho$ is a morphism, it follows that $d_1\circ\rho_1 = \rho_2\circ d_1$. Moreover, $i\circ Z^1(\rho)=i\circ\rho_1$.\\

\medskip

Proof of item 2: Let $p=(c_2,c_3,c_4,0)\in\mathcal{C}(\mathcal{A},\mathcal{B})^1$. Let $E:V\longrightarrow V'$, such that $EF=Id_{V'}$. Since F is injective, there exists a surjective map $E$. Thus,
		\begin{equation*}
		\begin{array}{ccl}
		\rho_1(c_2E,c_3E,i,c_4E,0) & = & (c_2EF, c_3EF - F0, c_4EF, 0)\\
		& = & (c_2,c_3,c_4,0)
		\end{array}.
		\end{equation*}
		Then, $\rho_1$ is surjective. In order to show that $\rho_2$ is surjective, consider $c_1\in Hom(V',V)$. Therefore
		\begin{equation*}
		\rho_2(c_1E) =  c_1EF = c_1
		\end{equation*}
		Hence $\rho_2$ is surjective.
	
\medskip
	
\noindent Proof of item 3: Since $\rho_1$ is surjective, $\rho_1^{-1}(p)$ is a fiber over the linear space $\ker(\rho_1)$ for all $p\in\mathcal{C}(\mathcal{A},\mathcal{B})^{1}$. Since $Z^1(\mathcal{C}(\mathcal{A}))\oplus Z^1(\mathcal{C}(\mathcal{B}))$ it is a proper subspace $\mathcal{C}(\mathcal{A})^1\oplus\mathcal{C}(\mathcal{B})^1$, it remains to show that 
		$$
		\Delta = \dim(\ker(\rho_1)) + \dim(Z^1(\mathcal{C}(\mathcal{A}))\oplus Z^1(\mathcal{C}(\mathcal{B})))-\dim(\mathcal{C}(\mathcal{A})^1\oplus\mathcal{C}(\mathcal{B})^1)\geq 0.
		$$
		Indeed,
		$$
		\Delta = \dim(\ker(\rho_1)) - \dim((\mathcal{C}(\mathcal{A})^1\oplus\mathcal{C}(\mathcal{B})^1)\backslash(Z^1(\mathcal{C}(\mathcal{A}))\oplus Z^1(\mathcal{C}(\mathcal{B})))
		$$
		and $\Delta\geq 0$ means that $\ker(\rho_1)\cap (Z^1(\mathcal{C}(\mathcal{A}))\oplus Z^1(\mathcal{C}(\mathcal{B}))\neq 0$ and this conclude the proof, since the diagram above commutes. Indeed,
		\begin{equation*}
		\begin{array}{rcl}			
		\Delta & =    & \dim(\mathcal{C}(\mathcal{A})^1\oplus\mathcal{C}(\mathcal{B})^1) - \dim(Im(\rho_1)) + \dim(\mathcal{C}(\mathcal{A})^1\oplus\mathcal{C}(\mathcal{B})^1) - \\
		&      & \dim(Im(d_1)) - \dim(\mathcal{C}(\mathcal{A})^1\oplus\mathcal{C}(\mathcal{B})^1) \\
		& =    &  \dim(\mathcal{C}(\mathcal{A})^1\oplus\mathcal{C}(\mathcal{B})^1) - \dim(Im(\rho_1)) - \dim(Im(d_1))\\
		& =    & 2c^2 + 2rc + 1 - 2c - r - 1 - c^2 = c^2 + 2c(r-1) - r \ge 0
		\end{array}
		\end{equation*}
		At last, the dimension of the moduli space is equal to the dimension of $H^1(\mathcal{C}(X))$:
		\begin{equation*}
		\begin{array}{rcl}
		\dim(H^1(\mathcal{C}(X))) & = & -\dim(C(X))^0 +\dim(C(X))^1 -\dim(C(X))^2 +\dim(C(X))^3 \\
		& = & -(c^2+c'^2)  +(2c^2+2c'^2+2rc+cc') -(c^2+2c'c+c'r) +cc' \\
		& = & 2rc + c'^2-c'r = 2rc-r+1.
		\end{array}
		\end{equation*}
	\end{proof}


\section{ADHM construction of framed flags of sheaves} \label{adhm const}

In this section adapt the construction in \cite[Section 2]{nakajima}, and prove that the moduli space of framed flags of sheaves, introduced in Section \ref{sec.flags} above, is isomorphic to the moduli space of framed stable representations of the enhanced ADHM quiver. More precisely, we prove the following result.

\begin{thm} \label{thm-isom}
There exists an isomorphism of schemes $\mathcal{N}(r,n+l,l)\to\mathcal{F}(r,n,l)$, where $r,n,l\ge1$.
\end{thm}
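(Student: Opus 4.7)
The plan is to construct explicit mutually inverse morphisms via Nakajima's ADHM correspondence applied twice: once to the datum $(W,V,A,B,I,J)$ underlying an enhanced datum, and once to the induced stable quotient datum $X''$ on $V'':=V/\mathrm{Im}(F)$ from page~\pageref{x''01}. The surjection $V\twoheadrightarrow V''$ of quiver data then produces an inclusion of the associated framed sheaves whose cokernel is $0$-dimensional of length exactly $\dim V'$. To avoid clashes, $F$ will denote only the quiver map $V'\to V$ in what follows; the sheaves in a flag are written $E\subset G$.

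For the forward map, let $[X]=[(W,V,V',A,B,I,J,A',B',F)]\in\mathcal{N}(r,n+l,l)$. Stability condition $(S.2)$ of Lemma~\ref{lem:thetastab-equiv-stab} makes $(W,V,A,B,I,J)$ a stable ADHM datum, producing $(E,\varphi_E)\in\calm(r,n+l)$; the discussion on page~\pageref{x''01} shows that $X''$ is stable of type $(r,n)$, producing $(G,\varphi_G)\in\calm(r,n)$. The quotient $V\twoheadrightarrow V''$ intertwines the quiver operations and so lifts to a morphism of the associated Nakajima monads on $\p2$, inducing a morphism $\psi:E\to G$. Since both framings arise from the identity on $W$, $\psi|_{\ell_\infty}$ is an isomorphism, so $\mathrm{Im}\,\psi$ has rank $r$, whence $\ker\psi$ is a rank-$0$ torsion-free subsheaf of $E$, i.e.\ zero. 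The cokernel $Q:=\mathrm{coker}\,\psi$ is then supported in $\p2\setminus\ell_\infty$, has rank $0$ and $c_1=0$, and has length $c_2(E)-c_2(G)=l$ by additivity of the Chern character. Thus $(E,G,\varphi_G)\in\calf(r,n,l)$.

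For the inverse, given $(E,G,\varphi)\in\calf(r,n,l)$, the support hypothesis makes $E$ and $G$ coincide in a neighbourhood of $\ell_\infty$, so $\varphi$ restricts to a framing $\varphi_E$. Nakajima's equivalence yields ADHM data on $V_E$ (dimension $n+l$) and $V_G$ (dimension $n$), and the inclusion $E\hookrightarrow G$ induces by functoriality a linear map $\pi:V_E\to V_G$ intertwining the quiver operations. Using Nakajima's identification $V\cong H^1((-)(-\ell_\infty))$ together with the vanishings $H^0((-)(-\ell_\infty))=H^2((-)(-\ell_\infty))=0$ valid for framed sheaves, the long exact sequence of cohomology associated to $0\to E\to G\to G/E\to 0$ twisted by $\calo_{\p2}(-\ell_\infty)$ collapses to
\[ 0\longrightarrow H^0(G/E)\longrightarrow V_E\stackrel{\pi}{\longrightarrow} V_G\longrightarrow 0, \]
where we have used that $(G/E)(-\ell_\infty)\cong G/E$ because $G/E$ is $0$-dimensional. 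Setting $V':=\ker\pi$, $A':=A|_{V'}$, $B':=B|_{V'}$, and letting $F:V'\hookrightarrow V_E$ be the inclusion gives a stable framed enhanced ADHM datum: the relation $J_G\pi=J$ forces $J|_{V'}=0$, so the enhanced ADHM equations follow immediately, while $(S.1)$ and $(S.2)$ are automatic.

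Both assignments extend to flat families via the relative Nakajima construction and the flatness of the universal objects, so they are morphisms of schemes; they are mutually inverse by the functoriality of Nakajima's equivalence of categories. The principal technical input is the cohomological computation identifying $\dim\ker\pi$ with $h^0(G/E)$, which is the direct analogue for framed sheaves of Baranovsky's analysis in \cite{B} and relies essentially on the condition $\mathrm{supp}(G/E)\cap\ell_\infty=\emptyset$ through the vanishings of $H^i((G/E)(-\ell_\infty))$ for $i\neq 0$.
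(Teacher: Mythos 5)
Your proposal is correct and follows essentially the same route as the paper: the forward map via the quotient datum $X''$ on $V/\mathrm{Im}(F)$ and the induced morphism of ADHM monads, the inverse via functoriality of the ADHM construction together with the long exact sequence of $0\to E\to F\to F/E\to 0$ twisted by $\calo_{\p2}(-1)$, and the upgrade to a scheme isomorphism via families and representability. The only cosmetic difference is that you deduce injectivity of $\psi$ and the length of the cokernel directly from rank and Chern character considerations, whereas the paper packages this as a short exact sequence of quiver representations pushed through the exact functor $\mathbb{K}$ of Proposition \ref{prop:funtor-exato-pleno-fiel}.
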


We begin by revising a few facts about framed torsion free sheaves and stable representations of the ADHM quiver; the main references are \cite{A01-07} and \cite[Chapter 2]{nakajima}. Let $(x:y:z)$ be fixed homogeneous coordinates in $\mathbb{P}^2$, and let $\ell_\infty:=\{z=0\}$. Let $X=(W,V,A,B,I,J)$ be a representation of the ADHM quiver. The \textit{ADHM complex} associated to $X$ is the complex of locally free sheaves on $\mathbb{P}^2$ of the form
\begin{equation} \label{adhm cpx} \begin{tikzcd}
E^{\bullet}_X: V\otimes\mathcal{O}_{\mathbb{P}^2}(-1)\arrow{r}{\alpha} & (V\oplus V\oplus W)\otimes\mathcal{O}_{\mathbb{P}^2}\arrow{r}{\beta} & V\otimes\mathcal{O}_{\mathbb{P}^2}(1)
\end{tikzcd} \end{equation}
where
\begin{equation}
\alpha = \left[\begin{array}{c}	zA+x1_{V} \\ zB + y1_{V} \\ zJ
\end{array}\right],\quad 
\beta = \left[\begin{array}{ccc}-zB-y1_{V} & zA+x1_{V} & zI \end{array}\right].
\end{equation}
One can show that $\alpha$ is always injective, while $\beta$ is surjective if and only if $X$ is stable; in this case, the sheaf 
$$ E:=\mathcal{H}^0(E^{\bullet}_X)=\ker\beta/\im\alpha $$
is called the cohomology of the complex $E^{\bullet}_X$. In addition such $E$ is a rank $r$ torsion free sheaf on $\mathbb{P}^2$ with $c_2(E)=c$, framed by the induced isomorphism $\varphi:E|_{\ell_\infty}\stackrel{\sim}{\rightarrow} W\otimes\mathcal{O}_{\ell_\infty}$.

Conversely, given a framed torsion free sheaf $(E,\varphi)$ on $\mathbb{P}^2$, there is a stable ADHM datum $X$ such that $E$ is the cohomology of the the corresponding ADHM complex $E^{\bullet}_X$. This construction provides bijection between framed torsion free sheaves and stable representations of the ADHM quiver. For further references, we state the following result, cf. \cite[Chapter 2]{nakajima}.

\begin{thm} \label{teo:bijection-bet-torsion-sheaves-and-ADHM-rep}
There exists an isomorphism between the moduli space of framed torsion free sheaves on $\mathbb{P}^2$ with rank $r$ and second Chern class $n$ and the moduli space $\mathcal{M}(r,n)$ of stable representations of the ADHM quiver with numerical type $(r,n)$.
\end{thm}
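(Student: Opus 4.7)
The plan is to build an isomorphism $\Phi:\mathcal{N}(r,n+l,l)\to\calf(r,n,l)$ by globalising the ADHM construction so that each stable enhanced datum produces not a single framed sheaf but a short exact sequence of sheaves on $\p2$. Given a stable enhanced datum $X=(W,V,V',A,B,I,J,A',B',F)$, I form three complexes of locally free sheaves on $\p2$: the usual ADHM complex $E^\bullet_X$ of (\ref{adhm cpx}) attached to the stable ADHM datum $(W,V,A,B,I,J)$; the analogous complex $E^\bullet_{X''}$ attached to the quotient ADHM datum $X''=(W,V'',A'',B'',I'',J'')$ constructed on page~\pageref{x''01}, with $V''=V/\im F$; and a third, framing-free Koszul-type complex
\begin{equation*}
E^\bullet_{X'}:\ V'\otimes\calo_{\p2}(-1)\xrightarrow{\alpha'} V'^{\oplus 2}\otimes\calo_{\p2}\xrightarrow{\beta'} V'\otimes\calo_{\p2}(1)
\end{equation*}
built from $(A',B')$ alone, whose differentials are the $V'$-blocks of the differentials in $E^\bullet_X$. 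The enhanced ADHM relations $AF=FA'$, $BF=FB'$ and $JF=0$, together with the block decomposition of $(A,B,I,J)$ along the splitting $V\simeq V'\oplus V''$, say precisely that $F$ and the projection $V\twoheadrightarrow V''$ assemble into a short exact sequence of complexes
\begin{equation*}
0\to E^\bullet_{X'}\to E^\bullet_{X}\to E^\bullet_{X''}\to 0.
\end{equation*}

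Next I analyse the cohomology sheaves term-by-term. Stability of $X$ and $X''$ (condition $(S.2)$, verified for $X''$ on pages~\pageref{x''01}--\pageref{x''02}) combined with Theorem~\ref{teo:bijection-bet-torsion-sheaves-and-ADHM-rep} give $\mathcal{H}^{-1}(E^\bullet_X)=\mathcal{H}^{1}(E^\bullet_X)=0$, and likewise for $E^\bullet_{X''}$, while $E:=\mathcal{H}^0(E^\bullet_X)$ and $\calf:=\mathcal{H}^0(E^\bullet_{X''})$ are framed torsion-free sheaves of rank $r$ with $c_2=n+l$ and $c_2=n$ respectively. For the third complex, the classical Koszul exactness for commuting endomorphisms — the statement that $(zA'+x,zB'+y)$ forms a regular sequence on $V'\otimes\calo_{\p2}$ — yields $\mathcal{H}^{-1}(E^\bullet_{X'})=\mathcal{H}^{0}(E^\bullet_{X'})=0$ and identifies $Q:=\mathcal{H}^1(E^\bullet_{X'})$ as a zero-dimensional sheaf whose length equals $l$ (by a quick Euler-characteristic check), supported on the joint spectrum of $(A',B')$. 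Restricted to $\ell_\infty=\{z=0\}$, the map $\beta'$ reduces to $(-y,x)$, which is surjective at every point of $\ell_\infty$, so $Q$ is supported in $\p2\setminus\ell_\infty$. Feeding all of this into the long exact hypercohomology sequence of the triangle above collapses it to
\begin{equation*}
0\to E\to\calf\to Q\to 0,
\end{equation*}
with the framing of $\calf$ induced by the standard ADHM framing on the $W$ factor. Since the construction is manifestly functorial in the quiver data, it runs verbatim over any base scheme $S$ and defines the morphism $\Phi$.

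For the inverse $\Psi:\calf(r,n,l)\to\mathcal{N}(r,n+l,l)$, I use the cohomological form of the ADHM correspondence, namely that the ADHM vector space attached to a framed torsion-free sheaf $\mathcal{E}$ on $\p2$ is $H^1(\mathcal{E}\otimes\calo_{\p2}(-1))$, with $A,B$ realised by multiplication by the two affine coordinates. Given $(E,\calf,\varphi)\in\calf(r,n,l)$ with quotient $Q=\calf/E$, I twist the short exact sequence by $\calo_{\p2}(-1)$ and take cohomology; the vanishings $H^0(E(-1))=H^0(\calf(-1))=H^2(E(-1))=H^2(\calf(-1))=0$ (framed sheaves) and $H^i(Q(-1))\simeq H^i(Q)=0$ for $i\ge 1$ (zero-dimensional) collapse it to
\begin{equation*}
0\to H^0(Q)\to H^1(E(-1))\to H^1(\calf(-1))\to 0.
\end{equation*}
This simultaneously exhibits $V''=H^1(\calf(-1))$ of dimension $n$, $V=H^1(E(-1))$ of dimension $n+l$ and $V'=H^0(Q)$ of dimension $l$, together with a canonical injection $F:V'\hookrightarrow V$ with cokernel $V''$. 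Multiplication by the affine coordinates on $H^0(Q)$ supplies the commuting pair $(A',B')$, while the remaining maps $A,B,I,J$ come from the standard ADHM inverse applied to $(E,\varphi)$; the enhanced ADHM relations are then forced by naturality of $H^1(\cdot(-1))$ applied to the two-step filtration $E\subset\calf$.

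The main technical obstacle is the Koszul exactness $\mathcal{H}^0(E^\bullet_{X'})=0$ together with the length-$l$ identification of $Q$; although this underpins Nakajima's description of the Hilbert scheme of points on $\C^2$, making it work in families (so as to obtain $\Phi$ as a morphism of schemes and not merely a map of $\C$-points) requires a cohomology-and-base-change argument for the relative complex over an arbitrary base $S$, in particular ensuring that the formation of $Q$ commutes with base change. The residual work is bookkeeping: checking that $\Phi\circ\Psi$ and $\Psi\circ\Phi$ are the identity, both of which reduce to the observation that the short exact sequence of monads above recovers precisely the filtration $E\subset\calf$, and matching the $\mathbb{C}[x,y]$-module structures consistently with the sign conventions of (\ref{adhm cpx}).
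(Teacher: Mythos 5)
You have proved the wrong statement. Theorem \ref{teo:bijection-bet-torsion-sheaves-and-ADHM-rep} is the classical ADHM correspondence on $\p2$: framed torsion-free sheaves of rank $r$ with $c_2=n$ versus stable representations of the \emph{ordinary} ADHM quiver of numerical type $(r,n)$. The paper does not prove this at all; it quotes it from \cite[Chapter 2]{nakajima} as an input to Section \ref{adhm const}. Your proposal instead constructs an isomorphism $\mathcal{N}(r,n+l,l)\to\calf(r,n,l)$ between the moduli space of stable representations of the \emph{enhanced} ADHM quiver and the moduli space of framed flags of sheaves — that is Theorem \ref{thm-isom}, a different and logically downstream result. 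Worse, at the step where you establish $\mathcal{H}^{-1}(E^\bullet_X)=\mathcal{H}^{1}(E^\bullet_X)=0$ and identify $\mathcal{H}^0(E^\bullet_X)$ as a framed torsion-free sheaf with the right invariants, you explicitly invoke Theorem \ref{teo:bijection-bet-torsion-sheaves-and-ADHM-rep}; read as a proof of that theorem, the argument is circular. Nothing in your text addresses the actual content of the statement: injectivity of $\alpha$ and surjectivity of $\beta$ in the monad (\ref{adhm cpx}) under stability, the computation of the rank and Chern classes and the induced trivialization along $\ell_\infty$, the recovery of the datum from a framed sheaf via $V\simeq H^1(E(-1))$, $W\simeq H^0(E|_{\ell_\infty})$ and a Beilinson-type spectral sequence, and the verification that the two assignments are mutually inverse and descend to an isomorphism of moduli spaces.

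That said, as a blind reconstruction of the proof of Theorem \ref{thm-isom} your outline tracks Section \ref{adhm const} of the paper quite closely: the short exact sequence of monads $0\to E^{\bullet}_{\textbf{Z}}\to E^{\bullet}_{\textbf{S}}\to E^{\bullet}_{\textbf{Q}}\to 0$ (your $E^\bullet_{X'}$, $E^\bullet_X$, $E^\bullet_{X''}$), the vanishing $\mathcal{H}^{-1}(E^{\bullet}_{\textbf{Z}})=\mathcal{H}^{0}(E^{\bullet}_{\textbf{Z}})=0$ and the collapse to $0\to E\to F\to Q\to 0$, and the inverse obtained by applying $H^1(\cdot\otimes\calo_{\p2}(-1))$ to the filtration $E\subset F$ all appear there essentially as you describe. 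But since that is not the theorem under review, the proposal must be judged as missing its target: to answer the question asked you would need to supply (or explicitly defer to \cite{nakajima} for) the rank-$r$ monad construction itself, not a consequence of it.
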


We will need one more before steping into the proof of Theorem \ref{thm-isom}. Let $\mathfrak{A}$ denote the category of representations of the ADHM quiver, and let $\Kom(\mathbb{P}^2)$ be the category of complexes of sheaves on $\mathbb{P}^2$. Note that given a morphism $(\xi_1,\xi_2)$ between two representations $X$ and $\widetilde{X}$, one has the following morphism $\xi^{\bullet}=(\xi_1\oplus 1_V,(\xi_1\oplus\xi_1\oplus\xi_2)\otimes 1_V,\xi_1\otimes 1_V)$ between the ADHM complexes $E^{\bullet}_{X}$ and $E^{\bullet}_{\widetilde{X}}$
\begin{equation*}
\begin{tikzcd}
V\otimes\mathcal{O}_{\mathbb{P}^2}(-1)\arrow{r}{\alpha} \arrow{d}{\xi_1\oplus 1_V} & (V\oplus V\oplus W)\otimes\mathcal{O}_{\mathbb{P}^2}\arrow{r}{\beta}\arrow{d}{(\xi_1\oplus\xi_1\oplus\xi_2)\otimes 1_V} & V\otimes\mathcal{O}_{\mathbb{P}^2}(1)\arrow{d}{\xi_1\otimes 1_V}\\
V\otimes\mathcal{O}_{\mathbb{P}^2}(-1)\arrow{r}{\widetilde{\alpha}} & (V\oplus V\oplus W)\otimes\mathcal{O}_{\mathbb{P}^2}\arrow{r}{\widetilde{\beta}} & V\otimes\mathcal{O}_{\mathbb{P}^2}(1)
\end{tikzcd}
\end{equation*}

\begin{pps}\label{prop:funtor-exato-pleno-fiel}
The functor 
\begin{equation*}
\mathbb{K}: \mathfrak{A} \longrightarrow  \Kom(\mathbb{P}^2)
\end{equation*}
given by
\begin{align*}
\mathbb{K}(X) = E^{\bullet}_{X},\qquad \mathbb{K}(\xi_1,\xi_2) = \xi^{\bullet}.
\end{align*}
is exact, full and faithful.
\end{pps}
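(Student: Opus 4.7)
The plan is to verify the three properties of $\mathbb{K}$ in turn, with fullness being the only substantive step.

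Faithfulness I would handle first, by observing that if $\mathbb{K}(\xi_1,\xi_2)=\xi^\bullet$ is the zero morphism of complexes, then its degree~$0$ component $\xi_1\otimes 1_{\calo(-1)}$ vanishes as a sheaf morphism, which forces $\xi_1=0$ (the tensor factor being a non-zero line bundle), whereupon the degree~$1$ component reduces to $\xi_2\otimes 1_{\calo}=0$ and hence $\xi_2=0$. Exactness I would dispatch next: a short exact sequence in $\mathfrak{A}$ unpacks to compatible short exact sequences $0\to V'\to V\to V''\to 0$ and $0\to W'\to W\to W''\to 0$ of the underlying vector spaces, and since tensoring with any of the line bundles $\calo(-1),\calo,\calo(1)$ is exact, the resulting sequence of complexes is termwise exact in $\Kom(\p2)$.

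For fullness I would start with an arbitrary morphism of complexes $\Phi^\bullet:E^\bullet_X\to E^\bullet_{\widetilde X}$ and exploit the fact that each term of either complex is a direct sum of copies of a single line bundle $\calo(k)$, together with $\Hom(\calo(k),\calo(k))\cong\C$, to represent each $\Phi_i$ by a constant matrix on the underlying vector spaces: $\Phi_0=q$ and $\Phi_2=r$ in $\Hom(V,\widetilde V)$, and
$$
\Phi_1=\begin{pmatrix}p_{11}&p_{12}&p_{13}\\p_{21}&p_{22}&p_{23}\\p_{31}&p_{32}&p_{33}\end{pmatrix}
$$
a $3\times 3$ block matrix between $V\oplus V\oplus W$ and $\widetilde V\oplus\widetilde V\oplus\widetilde W$. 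The two commutativity squares, $\Phi_1\alpha=\widetilde\alpha\Phi_0$ and $\widetilde\beta\Phi_1=\Phi_2\beta$, then become equalities of matrices whose entries are linear forms in $(x,y,z)$. Matching the coefficients of $x$ and $y$ on the two sides should force $p_{11}=p_{22}=q=r$ and $p_{12}=p_{13}=p_{21}=p_{23}=p_{31}=p_{32}=0$; setting $\xi_1:=q$ and $\xi_2:=p_{33}$, the coefficients of $z$ in the remaining equations should then deliver exactly the intertwining identities $\widetilde A\xi_1=\xi_1 A$, $\widetilde B\xi_1=\xi_1 B$, $\widetilde I\xi_2=\xi_1 I$ and $\xi_2 J=\widetilde J\xi_1$, which are precisely the conditions for $(\xi_1,\xi_2)$ to define a morphism in $\mathfrak{A}$ satisfying $\mathbb{K}(\xi_1,\xi_2)=\Phi^\bullet$.

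I expect the main obstacle to be the coefficient bookkeeping in the fullness step: one must disentangle, from two matrix equations whose entries are linear forms in three variables, both the vanishing of the off-diagonal blocks of $\Phi_1$ and the equality $\Phi_0=\Phi_2$, before recognizing the remaining content as exactly the four ADHM intertwining relations.
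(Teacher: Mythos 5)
The paper does not actually prove this proposition: it declares the proof ``a straightforward exercise'' and defers it to the thesis \cite{patricia}, so there is no argument in the text to compare yours against. Your sketch is the expected one and it is correct: the key fact $\Hom(\calo_{\p2}(k),\calo_{\p2}(k))\cong\C$ does force each $\Phi_i$ to be a constant matrix, and the bookkeeping closes up as you predict, with the one caveat that you genuinely need \emph{both} commutativity squares to kill all six off-diagonal blocks --- the $x$- and $y$-coefficients of $\Phi_1\alpha=\widetilde\alpha\Phi_0$ only give $p_{11}=p_{22}=q$ and $p_{12}=p_{21}=p_{31}=p_{32}=0$, while $p_{13}=p_{23}=0$ and $q=r$ come from the $x$- and $y$-coefficients of $\widetilde\beta\Phi_1=\Phi_2\beta$; the $z$-coefficients of the two squares then reduce exactly to the four intertwining relations for $(\xi_1,\xi_2):=(q,p_{33})$. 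Faithfulness and exactness are as routine as you indicate.
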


The proof is a straightforward exercise; it can be found in \cite{patricia}.

Now, the first step in the proof of Theorem \ref{thm-isom} is to provide a bijection between $\mathcal{N}(r,n+l,l)$ and $\mathcal{F}(r,n,l)$. So let $X=(A,B,I,J,A',B',F)$ be a stable representation of the enhanced ADHM quiver of numerical type $(r,n+l,l)$, so that $F$ is injective, and the ADHM datum $(A,B,I,J)$ is stable. By  Lemma \ref{lem:q-sobrejetiva}, one can obtain a stable representation of the ADHM quiver $X''=(A'',B'',I'',J'')$ of numerical type $(r,n)$ fitting into the following diagram

\begin{equation} \label{diag:ADHM-sequence}
\begin{tikzpicture}[->,>=stealth',shorten >=1pt,auto,node distance=2.5cm,semithick]
\tikzstyle{every state}=[fill=white,draw=none,text=black]

\node[state]    (A)                 {$V$};
\node[state]    (B) [left of=A]     {$V'$};
\node[state]    (C) [below of=A]    {$W$};
\node[state]    (D) [below of=B]    {$\{0\}$};
\node[state]    (E) [right of=A]    {$V''$};
\node[state]    (F) [below of=E]    {$W.$};

\path (A) edge [out=30, in=60, loop]           node [above] {$B$}     (A)
edge [out=120, in=150, loop]         node [above] {$A$}     (A)
edge [out=-70,in=70]                 node         {$J$}     (C)
edge [out=0,in=180]                  node         {}        (E)
(B) edge [out=30, in=60, loop]           node [above] {$B'$}    (B)
edge [out=120, in=150, loop]         node [above] {$A'$}    (B)
edge [out=0, in=180,]                node         {$F$}     (A)
edge [out=-70,in=70]                 node         {}        (D)
(C) edge [out=110,in=-110]               node         {$I$}     (A)
edge [out=0,in=180]                  node         {}        (F)
(D) edge [out=110,in=-110]               node         {}        (B)
edge [out=0,in=180]                  node         {}        (C)
(E) edge [out=30, in=60, loop]           node [above] {$B''$}   (E)
edge [out=120, in=150, loop]         node [above] {$A''$}   (E)
edge [out=-70,in=70]                 node         {$J''$}   (F)
(F) edge [out=110,in=-110]               node         {$I''$}   (E);
\end{tikzpicture} \end{equation}

Denote by $\textbf{Z}$, $\textbf{S}$ and $\textbf{Q}$ the following representations of the ADHM quiver
\begin{equation*}
\begin{tikzpicture}[->,>=stealth',shorten >=1pt,auto,node distance=2.5cm,semithick]
\tikzstyle{every state}=[fill=white,draw=none,text=black]

\node[state]    (B) [left of=A]     {$V'$};
\node[state]    (D) [below of=B]    {$\{0\}$};

\path 
(B) edge [out=30, in=60, loop]           node [above] {$B'$}    (B)
edge [out=120, in=150, loop]         node [above] {$A'$}    (B)
edge [out=-70,in=70]                 node         {\qquad ,}        (D)
(D) edge [out=110,in=-110]               node         {}        (B);
\end{tikzpicture} \qquad\begin{tikzpicture}[->,>=stealth',shorten >=1pt,auto,node distance=2.5cm,
semithick]
\tikzstyle{every state}=[fill=white,draw=none,text=black]

\node[state]    (A)                 {$V$};
\node[state]    (C) [below of=A]    {$W$};

\path (A) edge [out=30, in=60, loop]           node [above] {$B$}     (A)
edge [out=120, in=150, loop]         node [above] {$A$}     (A)
edge [out=-70,in=70]                 node         {$J$\qquad and}     (C)
(C) edge [out=110,in=-110]               node         {$I$}     (A);
\end{tikzpicture}\qquad\begin{tikzpicture}[->,>=stealth',shorten >=1pt,auto,node distance=2.5cm,
semithick]
\tikzstyle{every state}=[fill=white,draw=none,text=black]

\node[state]    (E) [right of=A]    {$V''$};
\node[state]    (F) [below of=E]    {$W.$};

\path 
(E) edge [out=30, in=60, loop]           node [above] {$B''$}   (E)
edge [out=120, in=150, loop]         node [above] {$A''$}   (E)
edge [out=-70,in=70]                 node         {$J''$\qquad,}   (F)
(F) edge [out=110,in=-110]               node         {$I''$}   (E);
\end{tikzpicture}
\end{equation*}
respectively. Since $F$ is injective, and the map between $V$ and $V''$ is, by construction, surjective, the previous diagram \eqref{diag:ADHM-sequence} can be expressed as a short exact sequence in $\mathfrak{A}$:
\begin{equation} \label{eq:seq-exata-curta-ZSQ} 
0 \to \textbf{Z} \to \textbf{S} \to \textbf{Q} \to 0
\end{equation}

It then follows from Proposition \ref{prop:funtor-exato-pleno-fiel} that
\begin{equation}\label{cpx's}
0 \to E^{\bullet}_{\textbf{Z}} \to E^{\bullet}_{\textbf{S}} \to E^{\bullet}_{\textbf{Q}} \to 0,
\end{equation}
where $E^{\bullet}_{\textbf{Z}}$, $E^{\bullet}_{\textbf{S}}$ and $E^{\bullet}_{\textbf{Q}}$ are the ADHM complexes on $\mathbb{P}^2$ associated with the representations of the ADHM quiver $\textbf{Z}$, $\textbf{S}$ and $\textbf{Q}$, respectively, is a short exact sequence on $\Kom(\mathbb{P}^2)$. 

Since $\textbf{S}$ and $\textbf{Q}$ are stable, it follows that $\mathcal{H}^{p}(E^{\bullet}_{\textbf{S}})=
\mathcal{H}^{p}(E^{\bullet}_{\textbf{Q}})=0$ for $p=-1,1$. Since the morphism $\alpha$ in the complex 
$E^{\bullet}_{\textbf{Z}}$ is injective, we conclude that $\mathcal{H}^{-1}(E^{\bullet}_{\textbf{Z}})=0$; in addition, one can also check that $\mathcal{H}^{0}(E^{\bullet}_{\textbf{Z}})=0$, see the proof of \cite[Thm. 5.5]{A01-07}. These facts imply that the long exact sequence in cohomology associated with sequence (\ref{cpx's}) reduces to
\begin{equation} \label{eq:seq-exata-flag-of-sheaves} \begin{tikzcd}
0 \arrow{r}              & \mathcal{H}^{0}(E^{\bullet}_{\textbf{S}}) \arrow{r}  & \mathcal{H}^{0}(E^{\bullet}_{\textbf{Q}}) \arrow{r}  & \mathcal{\mathcal{H}}^{1}(E^{\bullet}_{\textbf{Z}}) \arrow{r}  & 0.
\end{tikzcd} \end{equation}
By the usual ADHM construction, outline above, the pair $(\mathcal{H}^{0}(E^{\bullet}_{\textbf{Q}}),\varphi)$ is a rank $r$ framed torsion free sheaf with second Chern class $n$; $\mathcal{H}^{0}(E^{\bullet}_{\textbf{S}})$ is a subsheaf of $\mathcal{H}^{0}(E^{\bullet}_{\textbf{Q}})$; finally, the quotient sheaf 
$$ \mathcal{H}^{1}(E^{\bullet}_{\textbf{Z}}) \cong 
\mathcal{H}^{0}(E^{\bullet}_{\textbf{Q}})/\mathcal{H}^{0}(E^{\bullet}_{\textbf{S}}) $$
is a 0-dimensional sheaf of length $l$ supported outside $\ell_\infty$, see the proof of \cite[Thm. 5.5]{A01-07}.

Summarizing, the triple $(\mathcal{H}^{0}(E^{\bullet}_{\textbf{S}}),\mathcal{H}^{0}(E^{\bullet}_{\textbf{Q}}),\varphi)$ is a framed flag of sheaves, and it yields a point of $\mathcal{F}(r,n,l)$.

Now let that $(E,F,\varphi)$ be a framed flag of sheaves with numerical invariants $r:={\rm rk}(E)={\rm rk}(F)$, $n:=c_2(F)$ and $l:=h^0(F/E)$. Since $(F,\varphi)$ and $(E,\varphi)$ are framed torsion free sheaves of rank $r$ and second Chern classes $n$ and $n+l$, respectively, there are stable representations of the ADHM quiver $\textbf{Q}=(W'',V'',A'',B'',I'',J'')$ of numerical type $(r,n)$, and $\textbf{S}=(W,V,A,B,I,J)$ with numerical type $(r,n+l)$ associated with $(F,\varphi)$ and $(E,\varphi)$, respectively. By Proposition \ref{prop:funtor-exato-pleno-fiel}, the inclusion $E\hookrightarrow F$ provides a morphism $\Psi:\textbf{S}\longrightarrow\textbf{Q}$ between representations of the ADHM quiver; we check that $\Psi$ is an epimorphism.

Indeed, recall from the proof of \cite[Thm. 2.1]{nakajima} that the vector spaces $V$ and $W$ in the ADHM complex (\ref{adhm cpx}) associated with a framed sheaf $(F,\varphi)$ are given by
$$ V \simeq H^1(F(-1)),\quad W \simeq H^0(F|_{l_{\infty}}) , $$
and similarly for the framed sheaf $(F,\varphi)$. Since the quotient $F/E$ is a 0-dimensional sheaf supported away from $\ell_\infty$, we obtain an isomorphism 
$$ \Psi_2: H^0(F|_{l_{\infty}}) \stackrel{\sim}{\rightarrow} H^0(F|_{l_{\infty}}) . $$
From the short exact sequence $0\to E \to F \to F/E \to 0$, we obtain the following exact sequence in cohomology
$$ 0 \to H^0(F/E(-1)) \to H^1(E(-1)) \stackrel{\Psi_1}{\longrightarrow} \to H^1(F(-1)) \to 0 , $$
since $H^0(F(-1))=0$ ($F$ is a $\mu$-semistable sheaf with $c_1(F)=0$) and $H^1(F/E(-1))=0$ (because $F/E$ is a 0-dimensional sheaf). The morphism $\Psi:\textbf{S}\longrightarrow\textbf{Q}$ is precisely given by the pair $(\Psi_1,\Psi_2)$; since both maps are surjective, it follows that $\Psi$ is an epimorphism, as desired.

The short exact sequence
$$ 0 \to \ker\Psi \to \textbf{S} \stackrel{\Psi}{\longrightarrow} \textbf{Q} \to 0 $$
translates into the diagram 
\begin{equation} \label{diag:ADHM-sequence02}
\begin{tikzpicture}[->,>=stealth',shorten >=1pt,auto,node distance=2.5cm,semithick]
\tikzstyle{every state}=[fill=white,draw=none,text=black]

\node[state]    (A)                 {$V$};
\node[state]    (B) [left of=A]     {$V'$};
\node[state]    (C) [below of=A]    {$W$};
\node[state]    (D) [below of=B]    {$\{0\}$};
\node[state]    (E) [right of=A]    {$V''$};
\node[state]    (F) [below of=E]    {$W$};

\path (A) edge [out=30, in=60, loop]           node [above] {$B$}             (A)
edge [out=120, in=150, loop]         node [above] {$A$}             (A)
edge [out=-70,in=70]                 node         {$J$}             (C)
edge [out=0,in=180]                  node         {$\Psi_1$}        (E)
(B) edge [out=30, in=60, loop]           node [above] {$B'$}    (B)
edge [out=120, in=150, loop]         node [above] {$A'$}    (B)
edge [out=0, in=180,]                node         {$F$}     (A)
edge [out=-70,in=70]                 node         {}        (D)
(C) edge [out=110,in=-110]               node         {$I$}     (A)
edge [out=0,in=180]                  node         {$\Psi_2$}        (F)
(D) edge [out=110,in=-110]               node         {}        (B)
edge [out=0,in=180]                  node         {}        (C)
(E) edge [out=30, in=60, loop]           node [above] {$B''$}   (E)
edge [out=120, in=150, loop]         node [above] {$A''$}   (E)
edge [out=-70,in=70]                 node         {$J''\quad .$}   (F)
(F) edge [out=110,in=-110]               node         {$I''$}   (E);
\end{tikzpicture}
\end{equation}
It follows that 
$$ X=(W,V,V',A,B,I,J,A',B',F) $$ 
is a framed stable representation of the enhanced ADHM quiver. Furthermore, the cohomology groups of the monads $E^{\bullet}_{\textbf{Z}}$, $E^{\bullet}_{\textbf{S}}$ and $E^{\bullet}_{\textbf{Q}}$ associated with the representations of the ADHM quiver ${\textbf{Z}}:=\ker\Psi$, \textbf{S} and \textbf{Q}, respectively, lead to the exact sequence in (\eqref{eq:seq-exata-flag-of-sheaves}). This completes the description of a bijection $\mathbf{b}:\mathcal{N}(r,n+l,l)\to\mathcal{F}(r,n,l)$.

\bigskip

In order to finalize the proof of Theorem \ref{thm-isom}, note that the complex of sheaves on $\p2$ in equation (\ref{adhm cpx}) can also be regarded as a family of complexes parametrized by $\mathcal{M}(r,c)$, i.e. a complex of sheaves on $\p2\times\mathcal{M}(r,c)$. Similarly, diagram (\ref{diag:ADHM-sequence}) allows as to think of equation (\ref{cpx's}) as a short exact sequence of complexes parametrized by $\mathcal{N}(r,c,c')$. Passing to cohomology as in equation (\ref{eq:seq-exata-flag-of-sheaves}), we obtain a family of framed flags of sheaves parametrized by $\mathcal{N}(r,c,c')$; in other words, the bijection $\mathbf{b}:\mathcal{N}(r,n+l,l)\to\mathcal{F}(r,n,l)$ just described can be regarded as an element of the set $\mathbf{F}_{(r,n,l)}(\mathcal{N}(r,n+l,l))$, described in the beginning of Section \ref{sec.flags} above. By the representability of the moduli functor $\mathbf{F}_{(r,n,l)}$, there is an unique bijective morphism of schemes $\mathfrak{b}:\mathcal{N}(r,n+l,l)\to\mathcal{F}(r,n,l)$ corresponding to the bijection $\mathbf{b}$.

On the other hand, as it happens with the usual ADHM construction, one can follow the steps of the inverse of the bijection $\mathbf{b}$ and note that any family of framed flags of sheaves $(F_S,\varphi_S,Q_S,g_S)$ parametrized by a scheme $S$ gives rise to a family of representations of the enhanced ADHM quiver parametrized by $S$, thus a morphism $S\to\mathcal{N}(r,n+l,l)$. This means that we get a map of sets 
$$ \mathbf{F}_{(r,n,l)}(S) = \Hom(S,\mathcal{F}(r,n,l)) \to \Hom(S,\mathcal{N}(r,n+l,l)) . $$
Applying this principle to the universal family, that is the family corresponding to the identity morphism in $\Hom(\mathcal{F}(r,n,l),\mathcal{F}(r,n,l))$, we obtain a morphism of schemes $\mathcal{F}(r,n,l) \to \mathcal{N}(r,n+l,l)$ which is the inverse of $\mathfrak{b}$.

This proves that $\mathcal{F}(r,n,l)$ and $\mathcal{N}(r,n+l,l)$ are indeed isomorphic as schemes.

\bigskip

The case $r=1$ is especially interesting. Recall that a framed torsion free sheaf of rank 1 is the same as the ideal sheaf of a zero dimensional scheme $Z\subset\mathbb{C}^2\simeq\mathbb{P}^2\setminus\ell_\infty$. Thus a framed flag of sheaves with numerical invariants $(1,n,l)$ can be regarded as a pair $(Z_1,Z_2)$ of zero dimensional schemes of $\mathbb{C}^2$ of length $n$ and $n+l$, respectively, and such that $Z_1\subset Z_2$.

In other words, $\calf(1,n,l)$ coincides with the nested Hilbert scheme ${\rm Hilb}^{n,n+l}(\C^2)$ of points on the plane. When $r=1$, the stability of the ADHM datum $(A,B,I,J)$ forces $J=0$ (see \cite[Proposition 2.8]{nakajima}). Thus the enhanced ADHM equations are reduced to
\begin{align*}
 [A,B] =0,\quad [A',B'] =0,\quad AF-FA'=0,\quad BF-FB'=0,
\end{align*} 
and provide the ADHM construction of the nested Hilbert scheme ${\rm Hilb}^{n,n+l}(\C^2)$.

\begin{obs}
The ADHM construction of the nested punctual Hilbert scheme, in which case the endomorphisms $A$, $B$, $A'$, and $B'$ are nilpotent, is also discussed by Bulois and Evain, cf. \cite[Section 3]{BE}.
\end{obs}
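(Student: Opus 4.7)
The final statement is a Remark whose content is primarily bibliographic, pointing to \cite[Section 3]{BE}. The only self-contained mathematical assertion is that, under the ADHM correspondence just established and specialized to $r=1$ as in the preceding paragraph, the nested \emph{punctual} Hilbert scheme corresponds precisely to the locus where the endomorphisms $A$, $B$, $A'$, $B'$ are simultaneously nilpotent. I sketch how I would verify that last correspondence.

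The plan is to reduce to the standard ADHM interpretation of the Hilbert scheme of points (cf.\ \cite[Chapter 2]{nakajima}) applied to each of the pieces in the short exact sequence \eqref{eq:seq-exata-curta-ZSQ}. For the outer datum $\mathbf{S}=(W,V,A,B,I,0)$ with $r=1$, the length-$(n+l)$ subscheme $Z_2\subset\C^2$ recovered from $(A,B,I)$ has structure sheaf $\calo_{Z_2}\cong V$, with $A$ and $B$ acting as multiplication by the two affine coordinates of $\C^2$. The set-theoretic support of $Z_2$ therefore coincides with the joint spectrum of the commuting pair $(A,B)$, so $Z_2$ is supported at the origin if and only if both $A$ and $B$ are nilpotent.

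Next, the enhanced relations $AF=FA'$ and $BF=FB'$ force $\im F\subset V$ to be an $(A,B)$-invariant subspace; via the identification in diagram \eqref{diag:ADHM-sequence}, this subspace is precisely the ideal of $Z_1$ inside $\calo_{Z_2}$. Consequently $(A',B')$ acts on $V'\cong \im F$ by the restriction of $(A,B)$, and the induced pair $(A'',B'')$ on $V''=V/\im F\cong \calo_{Z_1}$ realizes $Z_1$ through the same ADHM dictionary applied to $\mathbf{Q}$. Nilpotency of $A'$, $B'$ is thus equivalent to $(A,B)|_{\im F}$ being nilpotent, and nilpotency of $A''$, $B''$ to $Z_1$ being punctual at the origin. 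Combining these observations, the nested pair $Z_1\subset Z_2$ lies in the nested punctual Hilbert scheme iff all four endomorphisms are nilpotent.

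The only technical care required is to upgrade the set-theoretic support statements to a scheme-theoretic identification of closed subschemes of $\calf(1,n,l)$, which one obtains by carrying the nilpotency conditions through the universal family constructed in the proof of Theorem \ref{thm-isom}; and to check that the resulting scheme structure matches the one in \cite[Section 3]{BE}. Neither step presents a substantial obstacle, as both reduce to direct inspection of the two parallel constructions; the main work, if any, is purely notational reconciliation with the conventions of \cite{BE}.
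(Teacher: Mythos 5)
The paper offers no proof of this Remark at all --- it is a purely bibliographic observation pointing to \cite[Section 3]{BE} --- so there is nothing in the text to compare your argument against. Your sketch of the one substantive mathematical assertion (that, under the correspondence of Theorem \ref{thm-isom} specialized to $r=1$, the nested punctual Hilbert scheme is the locus where $A$, $B$, $A'$, $B'$ are nilpotent) is correct and follows the standard ADHM dictionary: the support of $Z_2$ is the joint spectrum of the commuting pair $(A,B)$, hence $Z_2$ is punctual at the origin iff $A$ and $B$ are nilpotent, while the relations $AF=FA'$ and $BF=FB'$ make $\im F$ an $(A,B)$-invariant subspace (the ideal of $Z_1$ in $\calo_{Z_2}$) on which $(A',B')$ acts as the restriction of $(A,B)$, and $(A'',B'')$ on $V/\im F\cong\calo_{Z_1}$ realizes $Z_1$. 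One small simplification worth noting: since $F$ is injective, nilpotency of $A'$ and $B'$ is automatic once $A$ and $B$ are nilpotent, so the four nilpotency conditions in the Remark reduce to the two conditions on $A$ and $B$ alone.
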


Another immediate consequence of Theorem \ref{teo:N-eh-suave}, Lemma \ref{connect}, and Theorem \ref{thm-isom}, we obtain the main claim of this paper, namely:

\begin{cor} \label{cor}
$\calf(r,n,1)$ is an irreducible, nonsingular quasi-projective variety of dimension $2rn+r+1$.
\end{cor}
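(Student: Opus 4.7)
The plan is to assemble the three results cited just before the corollary and verify that they slot together as claimed. The first step is to invoke Theorem~\ref{thm-isom} with $l=1$, producing an isomorphism of schemes $\calf(r,n,1)\cong\mathcal{N}(r,n+1,1)$. This transports every structural question about the moduli space of framed flags to the quiver moduli space, where the hard analytic work has already been carried out.

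Next I would apply Theorem~\ref{teo:N-eh-suave} with $c=n+1$, which asserts that $\mathcal{N}(r,n+1,1)$ is a non-singular quasi-projective variety of dimension $2r(n+1)-r+1=2rn+r+1$. This matches the stated dimension and simultaneously delivers quasi-projectivity and smoothness. No additional computation is required at this stage; one only verifies that the hypotheses of Theorem~\ref{teo:N-eh-suave} (namely $\dim V'=1$ and the existence of a stable enhanced ADHM datum) are implicit in the numerical type $(r,n+1,1)$.

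The remaining assertion is irreducibility. Here I would use Lemma~\ref{connect}, which guarantees that $\mathcal{N}(r,c,1)$ is connected whenever $r\ge 1$ and $c\ge 2$; taking $c=n+1$ fits the hypothesis provided $n\ge 1$. Since a smooth scheme is the disjoint union of its irreducible components, connectedness together with smoothness (established in the previous step) implies irreducibility. Combining these observations with the isomorphism from Theorem~\ref{thm-isom} concludes the proof.

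The only point that deserves a moment of care is the boundary case $n=0$ (if one wishes to include it): Lemma~\ref{connect} does not apply to $\mathcal{N}(r,1,1)$, and one would handle it separately, for instance by using the fibre description of Lemma~\ref{lem2} to identify $\calf(r,0,1)$ with an open subset of a projective bundle over $\mathcal{M}(r,0)$ whose fibres are $\mathbb{P}^{r-1}$-like, and deducing irreducibility directly. Apart from this corner case, no genuine obstacle arises; the statement is, as the author indicates, a formal consequence of the three cited results.
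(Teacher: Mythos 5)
Your proposal is correct and follows exactly the route the paper intends: the corollary is stated as an immediate consequence of Theorem \ref{thm-isom} (giving $\calf(r,n,1)\cong\mathcal{N}(r,n+1,1)$), Theorem \ref{teo:N-eh-suave} (smoothness, quasi-projectivity, and the dimension count $2r(n+1)-r+1=2rn+r+1$), and Lemma \ref{connect} (connectedness, hence irreducibility for a smooth variety). Your caveat about $n=0$ is harmless, since the paper assumes $n\ge1$ throughout, so $c=n+1\ge2$ and Lemma \ref{connect} applies.
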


Next, consider the morphism of schemes
$$ \mathfrak{s} ~ : ~ \mathcal{F}(r,n,l) \to \mathcal{M}(r,n)\times S^l(\mathbb{C}^2) $$
given by 
$$ \mathfrak{s}(E,F,\varphi) := \left( (F,\varphi) , [F/E] \right), $$
where $S^l(\mathbb{C}^2)$ denotes $l$-th symmetric power of $\mathbb{C}^2$, and is regarded as the moduli space of 0-dimensional sheaves on $\mathbb{C}^2$; $[F/E]$ denotes the S-equivalence class of the quotient sheaf $F/E$.

Let $\mathcal{M}^{0}(r,n)$ be the open subset of $\mathcal{M}(r,n)$ consisting of framed locally free sheaves; this is nonempty if and only if $r\ge2$; let also $S^l(\mathbb{C}^2)^0$ be the open subset of $S^l(\mathbb{C}^2)$ consisting of $l$-tuples of distinct points in $\mathbb{C}^2$. Finally, let 
$$ \mathcal{F}^{00}(r,n,l) := \left\{ (E,F,\varphi) ~|~ 
F \mbox{ is locally free, and } 
{\rm supp}(F/E) \mbox{ has } l \mbox{ distinct points} ~ \right\}. $$

When $r\ge2$, the restriction of $\mathfrak{s}$ to $\mathcal{F}^{00}(r,n,l)$ is surjective onto $\mathcal{M}^{0}(r,n)\times S^l(\mathbb{C}^2)^0$. Indeed, given a pair $\left( (F,\varphi) , (x_1,\dots,x_l) \right)$ with $x_{k}$ distinct, it is easy to see that there are epimorphisms of sheaves
$$ F \twoheadrightarrow \oplus \bigoplus_{k=1}^{l} \mathcal{O}_{x_k}. $$
Moreover, note that the fibres of $\mathfrak{s}$ are given by
$$ \Hom\left(F,\bigoplus_{k=1}^{l} \mathcal{O}_{x_k}\right) \simeq \mathbb{P}(F_{x_1})\times\cdots\times\mathbb{P}(F_{x_l}). $$
It follows that $\mathcal{F}^{00}(r,n,l)$ is an irreducible, nonsingular open subset of $\mathcal{F}(r,n,l)$, and
\begin{align*}
\dim \mathcal{F}^{0}(r,n,l) = & 
\dim \mathcal{M}^{0}(r,n) + \dim S^l(\mathbb{C}^2)^0 + l(r-1) \\
= & 2rn + 2l + l(r-1) = 2rn + l(r+1).
\end{align*}

Now let
$$ \mathcal{F}^{0}(r,n,l) := \left\{ (E,F,\varphi) ~|~ F \mbox{ is locally free } \right\}, $$
and assume, in addition, that $l\le r$. In this case, the morphism $\mathfrak{s}$ provides, by the same argument as in the previous paragraph, a surjective morphism 
$$ \mathcal{F}^{0}(r,n,l) \to \mathcal{M}^{0}(r,n)\times S^l(\mathbb{C}^2). $$
This shows that $\mathcal{F}(r,n,l)$ is singular when $r\ge2$ and $l\le r$.  

Finally, Theorem \ref{thm-isom} also indicates the possibility of comparing the morphism obtained in Lemma \ref{lem2} with the one described in Lemma \ref{lem:q-sobrejetiva}. Indeed, there is a commutative diagram of schemes
$$ \xymatrix{
\mathcal{N}(r,n+l,l) \ar[rr]^{\mathfrak{b}}_{\sim} \ar[dr]^{\mathfrak{q}} & & \mathcal{F}(r,n,l) \ar[ld]^{\mathfrak{p}} \\
& \mathcal{M}(r,n) &
} $$
It follows that the fibres of $\mathfrak{q}$ and $\mathfrak{p}$ are isomorphic, hence the equations on the right hand side of (\ref{eq:lem-enhADHMeqobt02}) provide an ADHM-type description of the \emph{framed quot scheme}:
$$ {\rm Quot}^l_{\infty}(F) := \{ (Q,q)\in {\rm Quot}^l(F) ~|~ {\rm supp}(Q)\cap\ell_\infty=\emptyset \} .$$


\section{Geometric structures on $\mathcal{N}(r,c,c')$} \label{geometry}

The goal of this sections is to study geometric structures on the moduli space of framed flags of sheaves, motivated by the fact that the moduli space of framed torsion free sheaves on $\p2$ is known to be a hyperk\"ahler manifold.

Recall that a \emph{hyperk\"ahler manifold} is a Riemannian manifold $(M,g)$ equipped with three parallel complex structures $(\Gamma_1,\Gamma_2,\Gamma_3)$ satisfying the usual quaternionic relations; in addition, each 2-form $\omega_k(\cdot,\cdot):=g(\Gamma_k\cdot,\cdot)$ is a K\"ahler form for the K\"ahler manifold $(M,g,\Gamma_k)$. One can then define a symplectic form $\Omega:=\omega_2+i\omega_3$, which is holomorphic with respect to the complex structure $\Gamma_1$; the triple $(M,\Gamma_1,\Omega)$ is called the \emph{holomorphic symplectic manifold} associated with the hyperk\"ahler ma\-ni\-fold $(M,g,\Gamma_1,\Gamma_2,\Gamma_3)$.

\begin{dfn}
A \emph{pre-hyperk\"ahler manifold} is a K\"ahler ma\-ni\-fold $(M,g,\Gamma)$ equipped with a pair of closed 2-forms $(\omega_1,\omega_2)$ satisfying
\begin{equation}\label{pre-hk}
\omega_2(\cdot,\cdot) = \omega_3(\cdot,\Gamma\cdot) .
\end{equation}
\end{dfn}

Given a pre-hyperk\"ahler manifold $(M,g,\Gamma,\omega_2,\omega_3)$, one can define the closed 2-form $\Omega:=\omega_2+i\omega_3$; condition (\ref{pre-hk}) implies that
$$ \Omega(\cdot,\Gamma\cdot) = i\Omega(\cdot,\cdot) $$
hence $\Omega$ is holomorphic with respect to $\Gamma$. This observation motivates the following definition.

\begin{dfn}
A \emph{holomorphic pre-symplectic manifold} is a triple $(M,\Gamma,\Omega)$ consisting a complex manifold $(M,\Gamma)$ equipped with a holomorphic pre-symplectic structure $\Omega$.
\end{dfn}

The holomorphic pre-symplectic manifold $(M,\Gamma,\Omega)$ decribed in the paragraph before the previous definition is called the holomorphic pre-symplectic manifold associated with the pre-hyperk\"ahler manifold $(M,g,\Gamma,\omega_2,\omega_3)$. Note that $\Omega$ is non-degenerate if and only if both $\omega_2$ and $\omega_3$ are non-degenerate.

We will prove below that $\calf(r,n,1)$ admits the structure of a pre-hyperk\"ahler manifold; this is done by embedding it into a hyperk\"ahler manifold.


\subsection{The hyperk\"ahler manifold $\mathcal W$}

\indent Consider the following vector space
$$
\mathbb{X} = End(V)^{\oplus 2}\oplus Hom(W,V)\oplus Hom(V,W) \oplus End(V')^{\oplus 2} \oplus Hom(V',V) \oplus Hom(V,V').
$$

Define in $\mathbb{X}$ the following equations
\begin{equation}
\label{eq:w}
[A,B] + IJ +FG= 0, \quad [A',B'] -GF = 0,
\end{equation}
where $X=(A,B,I,J,A',B',F,G)\in\mathbb{X}$. A vector $X\in\mathbb{X}$ is called \textit{stable} if it satisfies the conditions $(S.1)$ and $(S.2)$ of Lemma \ref{lem:thetastab-equiv-stab}, i.e., if $F$ is injective and the ADHM data given by $(A,B,I,J)$ is stable. Let
\begin{equation*}
\mathbb{W}=\mathbb{W}(r,c,c'):= \{X\in\mathbb{X}: X \mbox{ satisfies \eqref{eq:w} and X is stable}\}.
\end{equation*}

Note that the $\mathcal{G}$-action in \eqref{eq:acaolivre} is free on $\mathbb{W}$ and that the equations in \eqref{eq:w} are preserved by the $\mathcal{G}$-action in \eqref{eq:acaolivre}. Indeed the freeness of this action has been already proved in Proposition \ref{prop:acaolivre}. Analogously to Lemma \ref{lem:preservedbyG}, one can prove that the equations in \eqref{eq:w} are preserved by the $\mathcal{G}$-action \eqref{eq:acaolivre}. The same is true if the one consider the action of $\mathcal{U}:=U(V)\times U(V')$ on $\mathbb{W}(r,c,c')$ given by \eqref{eq:acaolivre}, i.e.,
\begin{equation}
\begin{array}{ccl}
\label{eq:w-acaolivre}
\mathcal{U}\times \mathbb{X} & \longrightarrow & \mathbb{W}\\
(h,h',X)                     & \longmapsto     & (hAh^{-1}, hBh^{-1}, hI, Jh^{-1}, h'A'h'^{-1}, h'B'h'^{-1}, hFh'^{-1},h'Gh^{-1})
\end{array}
\end{equation}
is a free action of $\mathcal{U}$ on $\mathbb{W}$ which preserves the equations \eqref{eq:w}. The moduli space of stable points of $\mathbb{W}$, $\mathcal{W}(r,c,c')$, can be constructed using Geometric Invariant Theory techniques. Moreover, the moduli space of framed stable representations of the enhanced ADHM quiver, $\mathcal{N}(r,c,c')$ is embedded in $\mathcal{W}(r,c,c')$. In fact, if $X\in\mathbb{X}$ is stable, then $X$ satisfies
\begin{align}
[A,B]+IJ =0,\quad [A',B']=0,\quad G\equiv 0. \nonumber
\end{align}
Thus, $X$ satisfies the equations \eqref{eq:w} and $X\in\mathbb{W}$. In this section, it will be proved that the moduli space $\mathcal{W}(r,c,c')$ can be obtained by a hyperkähler reduction. In other words, $\mathcal{N}(r,c,c')$ is embedded in the hyperkähler variety $\mathcal{W}(r,c,c')$. The hyperkähler reduction is presented in details below.\\
\indent In order to prove that $\mathcal{W}(r,c,c')$ is a hyperkähler variety, consider the following equations
\begin{equation}
\label{eq:U-w}
\left\{\begin{array}{lcl}
[A,A^{\dagger}] + [B,B^{\dagger}] + II^{\dagger} - J^{\dagger}J+FF^{\dagger} - G^{\dagger}G & = & 0 \\ 
\mbox{}[A^{\prime},A^{\prime\dagger}] + [B',B^{\prime\dagger}] -F^{\dagger}F + GG^{\dagger} & = & 0
\end{array}\right. ,
\end{equation}
where $A^{\dagger}$ denotes the hermitian adjoint of the map $A$. It will be proved that the equations \eqref{eq:w} and \eqref{eq:U-w} can be obtained as a hyperkähler moment map $\mu$. Thus, one can view the hyperkähler variety $\widetilde{\mathcal{W}}:=\mu^{-1}(0)/\mathcal{U}$. However, it follows from the Kempf--Ness Theorem that the moduli space $\mathcal{W}(r,c,c')$ obtained above is isomorphic to the hyperkähler variety $\widetilde{\mathcal{W}}$. \\
\indent Indeed, define on $\mathbb{W}$ the hermitian metric
\begin{equation*}
\langle\mbox{ , }\rangle: T\mathbb{W}\times T\mathbb{W}  \longrightarrow  \mathbb{C}
\end{equation*}
given by
\begin{align*}
\langle x_1,x_2\rangle  = \frac{1}{2}tr(& a_1a^{\dagger}_{2} + a_2a^{\dagger}_{1} + b_1b^{\dagger}_{2} + b_2b^{\dagger}_{1} +i_1i^{\dagger}_{2} + i_2i^{\dagger}_{1} + j^{\dagger}_{2}j_1 + j^{\dagger}_{1}j_2 + \nonumber\\ 
&  a'_1a^{\prime\dagger}_{2} + a'_2a^{\prime\dagger}_{1} + b'_1b^{\prime\dagger}_{2} + b'_2b^{\prime\dagger}_{1} + f^{\dagger}_{2}f_1 + f^{\dagger}_{1}f_2 + g_1g^{\prime\dagger}_{2} + g_2g^{\prime\dagger}_{1}),
\end{align*}
where $x_1 = (a_1,b_1,i_1,j_1,a'_1,b'_1,f_1,g_1)$ and $x_2 = (a_2,b_2,i_2,j_2,a'_2,b'_2,f_2,g_2)$. Define in $T\mathbb{W}$ the following complex structures. Let $x=(a,b,i,j,a',b',f,g)\in T\mathbb{W}$,
\begin{equation*}
\left\{\begin{array}{lll}
\Gamma_1(x) & = & (\sqrt{-1}a,\sqrt{-1}b,\sqrt{-1}i,\sqrt{-1}j,\sqrt{-1}a',\sqrt{-1}b',\sqrt{-1}f,\sqrt{-1}g)\\
\Gamma_2(x) & = & (-b^{\dagger},a^{\dagger},-j^{\dagger},i^{\dagger},-b^{\prime\dagger},a^{\prime\dagger},-g^{\dagger},f^{\dagger})\\
\Gamma_3(x) & = & \Gamma_1\circ\Gamma_2(x)
\end{array}\right. .
\end{equation*} 

It is not difficult to check that $(\mathbb{W}, \langle\mbox{ , }\rangle, \Gamma_1, \Gamma_2, \Gamma_3)$ is a hyperkähler manifold.

\begin{pps}\label{W}
$\mathcal{W}(r,c,c')$ is a smooth hyperk\"ahler manifold of (complex) dimension $2c(r+c')$.
\end{pps}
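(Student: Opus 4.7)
The plan is to realize $\mathcal{W}(r,c,c')$ as a hyperk\"ahler quotient of the flat hyperk\"ahler vector space $(\mathbb{X},\langle\cdot,\cdot\rangle,\Gamma_1,\Gamma_2,\Gamma_3)$ by the compact group $\mathcal{U}=U(V)\times U(V')$, in the spirit of the classical construction of $\mathcal{M}(r,n)$ in \cite[Chapter 3]{nakajima}. A direct check shows that the $\mathcal{U}$-action \eqref{eq:w-acaolivre} preserves the hermitian metric and commutes with each $\Gamma_k$, so the reduction machinery applies.

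The first step is to write down the hyperk\"ahler moment map $\mu=(\mu_1,\mu_2,\mu_3):\mathbb{X}\to(\mathfrak{u}(V)\oplus\mathfrak{u}(V'))\otimes\mathbb{R}^3$ for the $\mathcal{U}$-action. Using the trace pairing to identify $\mathfrak{u}^*$ with $\mathfrak{u}$, a direct calculation analogous to the one in \cite[Chapter 3]{nakajima}, but with extra contributions coming from $F\in\mathrm{Hom}(V',V)$ and $G\in\mathrm{Hom}(V,V')$, shows that $\mu_1$ is precisely the pair of expressions on the left hand side of \eqref{eq:U-w}, while the complex moment map $\mu_{\mathbb{C}}:=\mu_2+i\mu_3:\mathbb{X}\to\mathrm{End}(V)\oplus\mathrm{End}(V')$ is precisely the pair of expressions on the left hand side of \eqref{eq:w}. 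Hence $\mathbb{W}\cap\mu_1^{-1}(0)=\mu^{-1}(0)\cap\{X\text{ stable}\}$.

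The second step is to verify smoothness of the reduction at stable points. By Proposition \ref{prop:acaolivre}, the larger complex group $\mathcal{G}$ acts freely on the stable locus, so in particular the compact subgroup $\mathcal{U}\subset\mathcal{G}$ does as well. The standard argument $(d\mu)^*=\mathcal{I}\circ\sigma$, where $\sigma$ denotes the infinitesimal $\mathcal{U}$-action and $\mathcal{I}=(\Gamma_1,\Gamma_2,\Gamma_3)$, then shows that $d\mu$ is surjective at every stable point of $\mu^{-1}(0)$; consequently $\mu^{-1}(0)$ is a smooth submanifold of $\mathbb{X}$ of real codimension $3\dim_{\mathbb{R}}\mathcal{U}$, and the quotient $\widetilde{\mathcal{W}}:=(\mu^{-1}(0)\cap\{X\text{ stable}\})/\mathcal{U}$ inherits a smooth hyperk\"ahler structure. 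The hyperk\"ahler Kempf--Ness theorem (see \cite[Theorem 3.12]{nakajima}) then identifies $\widetilde{\mathcal{W}}$ with the GIT quotient of $\mu_{\mathbb{C}}^{-1}(0)$ by $\mathcal{G}$ restricted to the stable locus, which is exactly $\mathcal{W}(r,c,c')$. The complex dimension is computed by the usual accounting:
\[
\dim_{\mathbb{C}}\mathcal{W}(r,c,c')=\dim_{\mathbb{C}}\mathbb{X}-2\dim_{\mathbb{C}}\mathcal{G}=(2c^2+2rc+2c'^2+2cc')-2(c^2+c'^2)=2c(r+c').
\]

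The main obstacle is the first step: matching signs and hermitian adjoints so that \eqref{eq:w} and \eqref{eq:U-w} really do arise as the components of a single hyperk\"ahler moment map with respect to the explicit symplectic forms $\omega_k(\cdot,\cdot)=\langle\Gamma_k\cdot,\cdot\rangle$. Once this verification is carried out (it is a routine generalization of the ADHM computation, with the terms involving $F$ and $G$ balancing in the expected way to give the sign conventions of \eqref{eq:w}), the remaining assertions follow from the standard hyperk\"ahler quotient formalism and the freeness already established.
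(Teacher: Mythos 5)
Your proposal follows essentially the same route as the paper: realizing $\mathcal{W}(r,c,c')$ as a hyperk\"ahler reduction of the flat space $\mathbb{X}$ by $\mathcal{U}=U(V)\times U(V')$, identifying the real and complex moment maps with the equations \eqref{eq:U-w} and \eqref{eq:w}, using the freeness from Proposition \ref{prop:acaolivre} together with the Kempf--Ness identification, and computing the dimension by the same count (the paper carries out the moment-map computation explicitly via the exact $1$-forms $\theta_n$, which is the verification you defer as routine). The argument and the resulting dimension $2c(r+c')$ agree with the paper's proof.
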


\begin{proof}
	It is straightfoward to check the $\mathcal{U}$-action \eqref{eq:w-acaolivre} satisfies
	\begin{align*}
	\langle(h,h')\cdot u, (h,h')\cdot v\rangle = \langle u,v\rangle
	\end{align*}
	for all $(h,h')\in\mathcal{U}$ and preserves the complex structures $\Gamma_n$, $n\in\{1,2,3\}$, i.e., it satisfies
	\begin{equation*}
	\Gamma_n((h,h')\cdot u) = (h,h')\cdot\Gamma_n(u)
	\end{equation*} 
	for all $n\in\{1,2,3\}$ and for all $(h,h')\in\mathcal{U}$. Let 
	$$
	(\xi,\xi')\in \mathfrak{u}=\mathfrak{u}(V)\times\mathfrak{u}(V'):= \{(\xi,\xi')\in GL(V)\times GL(V'); \xi+\xi^{\dagger}=\xi'+\xi^{\prime\dagger}=0\}.
	$$
	
	One can compute the fundamental vector field $(W_{(\xi,\xi')})$ as following.\\
	\indent Let $W\in\mathbb{W}$ and
	$$
	(W_{(\xi,\xi')})_W=d\Psi_W(1_{V},1_{V'})(\xi,\xi')
	$$
	
	where
	\begin{equation*}
	\begin{array}{llll}
	\Psi_W: & \mathcal{U} & \longrightarrow & \mathbb{W}\\
	&  (h,h')     & \longmapsto     & (h,h')\cdot W
	\end{array}
	\end{equation*}
	
	and consider the smooth curve
	\begin{equation*}
	\gamma:(-\epsilon,\epsilon)\longrightarrow\mathcal{U}
	\end{equation*}
	
	given by the ODE
	\begin{equation*}
	\left\{\begin{array}{lll}
	\gamma(0) & = & (1_{V},1_{V'})\\
	\frac{d}{dt}(\gamma)|_{t=0}&=&(\xi,\xi')
	\end{array}\right. .
	\end{equation*}
	
	Thus, 
%
	
%
%
%
	\begin{align*}
	(W_{(\xi,\xi')})_W =& \frac{d}{dt}(\Psi_W\circ\gamma)|_{t=0} \nonumber \\
	&= ([\xi, a], [\xi, b], \xi i, -j\xi, [\xi, a'], [\xi, b'], \xi f-f\xi', \xi'g-g\xi).
	\end{align*}
	
	Now, in order to construct a moment map, one can prove that the Kähler forms $\omega_n$, $n\in\{1,2,3\}$ are exact, i.e. that there exists a $1$-form $\theta_{n}$ in $\mathbb{W}$ such that $\omega_n = d\theta_n$, for all $n\in\{1,2,3\}$. First, it is not difficult to check that for all $x_1$, $x_2\in T\mathbb{W}$
	\begin{align*}
	\omega_1(x_1,x_2) :=& \langle \Gamma_1 x_1, x_2\rangle \nonumber \\
	&= \frac{\sqrt{-1}}{2}tr(a_1a^{\dagger}_{2} - a_2a^{\dagger}_{1} + b_1b^{\dagger}_{2} - b_2b^{\dagger}_{1} +i_1i^{\dagger}_{2} - i_2i^{\dagger}_{1} + j^{\dagger}_{2}j_1 - j^{\dagger}_{1}j_2 + \nonumber\\ 
	&  a'_1a^{\prime\dagger}_{2} - a'_2a^{\prime\dagger}_{1} + b'_1b^{\prime\dagger}_{2} - b'_2b^{\prime\dagger}_{1} + f^{\dagger}_{2}f_1 - f^{\dagger}_{1}f_2 + g_1g^{\prime\dagger}_{2} - g_2g^{\prime\dagger}_{1})	
	\end{align*}
	
	Let $\pi_1:\mathbb{W}\longrightarrow\mathbb{W}$ given by $\pi_1(a,b,i,j,a',b',f,g)=(a,0,0,0,0,0,0,0)$, one can introduce the following $2$-form
	\begin{align*}
	d\pi_1\wedge d\pi_1^{\dagger}((a_1,b_1,i_1,j_1,a'_1,b'_1,f_1,g_1),(a_2,b_2,i_2,j_2,a'_2,b'_2,f_2,g_2)) =& a_1a_2^{\dagger} - a_2a_1^{\dagger}.
	\end{align*}
	
	Defining $\pi_i:\mathbb{W}\longrightarrow\mathbb{W}$ as the projection in the i-th coordinate, one can write $\omega_1$ as
	\begin{align*}
	\omega_1 =& d\theta_1 
	\end{align*}
	
	Where, $\theta_1$ is given by
	\begin{align*}
	\theta_1 =& \frac{\sqrt{-1}}{2}tr(\pi_1\wedge d\pi_1^{\dagger} +\pi_2\wedge d\pi_2^{\dagger} +\pi_3\wedge d\pi_3^{\dagger} +\pi_4\wedge d\pi_4^{\dagger} \nonumber \\
	& +\pi_5'\wedge d\pi_5^{\prime\dagger} +\pi_6'\wedge d\pi_6^{\prime\dagger} +\pi_7\wedge d\pi_7^{\dagger} +\pi_8\wedge d\pi_8^{\dagger}).
	\end{align*}
	
	
	Identifying $\mathfrak{u}(V)\times\mathfrak{u}(V')\cong\mathfrak{u}(V)^{\ast}\times\mathfrak{u}(V')^{\ast}$ via the inner product $(a,b)=tr(ab^{\dagger})$,  one obtains the moment map
	\begin{align*}
	\mu_1(x) :=& -\theta_1(W_{(\xi,\xi')})_w\\
	=& ((\frac{1}{2\sqrt{-1}}[a,a^{\dagger}] + [b,b^{\dagger}] + ii^{\dagger}-j^{\dagger}j +ff^{\dagger}-g^{\dagger}g)),\nonumber \\(          &\frac{1}{2\sqrt{-1}})([a',a^{\prime\dagger}]+[b',b^{\prime\dagger}]-f^{\dagger}f+gg^{\dagger})).
	\end{align*}
	Repeating this procedure for $\omega_2$ and $\omega_3$, one can find 
	moment maps
	\begin{align*}
	\mu_2(x) =& \frac{-1}{2}(([a,b]+[a^{\dagger},b^{\dagger}+ij-j^{\dagger}i^{\dagger}+fg-g^{\dagger}f^{\dagger}]),\nonumber \\
	& ([a',b']+[a^{\dagger},b^{\dagger}]-gf+f^{\dagger}g^{\dagger}))
	\end{align*}
	and
	
	\begin{align*}
	\mu_3(x) =& \frac{-1}{2\sqrt{-1}}(([a,b]-[a^{\dagger},b^{\dagger}+ij+j^{\dagger}i^{\dagger}+fg+g^{\dagger}f^{\dagger}]),\nonumber \\
	& ([a',b']-[a^{\dagger},b^{\dagger}]-gf-f^{\dagger}g^{\dagger})).
	\end{align*}
	
	Then, defining the moment map
	\begin{align*}
	\mu_{\mathbb{C}}(x) =& (\mu_2+\sqrt{-1}\mu_3)(x)\nonumber \\
	=& -(([a,b]+ij+fg),[a',b']-gf).
	\end{align*} 
	
	it follows from Theorem \cite[Theorem 3.35]{nakajima} that
	$$
	\mathcal{W}=\frac{\mu_1^{-1}(0)\cap\mu_{\mathbb{C}}^{-1}(0)\cap\mathbb{W}}{\mathcal{U}}
	$$
	
	has a hyperkähler structure induced from $(\mathbb{W},\langle , \rangle, \Gamma_1, \Gamma_2, \Gamma_3)$, since the $\mathcal{U}$-action acts freely on the stable points of $\mathbb{W}$. Moreover, its real dimension is given by
	\begin{align*}
	\dim_{\mathbb{R}}\mathcal{W}(r,c,c') = 4(c^2+rc+c^{\prime 2}+cc')- 4 (c^2-c^{\prime2}) = 4(rc+cc')
	\end{align*} This concludes the proof that  $(\mathcal{W}(r,c,c'),\langle\mbox{ , }\rangle, \Gamma_1,\Gamma_2,\Gamma_3)$ is a hyperk\"ahler manifold.
\end{proof}

Note that the moduli space $\mathcal{N}(r,c,c')$ of stable framed representations of the enhanced ADHM quiver is a subvariety of the hyperk\"ahler manifold. Indeed, let $$X=(A,B,I,J,A',B',F,G)$$ be a stable representation of the enhanced ADHM quiver. It follows from the enhanced ADHM equations and from Corollary \ref{corolario-lema222} that $X$ satisfies the equations \eqref{eq:w}. Therefore, $X\in\mathbb{W}$ and $\mathcal{N}(r,c,c')$ is a subvariety of $\mathcal{W}(r,c,c')$, embedded by the natural inclusion.


\subsection{The pre-hyperk\"ahler structure on $\caln(r,c,1)$}

\indent In this section, one can find the consequences of the fact that the moduli space $\mathcal{N}(r,c,1)$ is a subvariety of the hyperkähler manifold $\mathcal{W}(r,c,1)=(\mathcal{W}(r,c,1),\langle\mbox{ , }\rangle, \Gamma_1,\Gamma_2,\Gamma_3)$. It was proved in the last section that this is true for the general case $\mathcal{N}(r,c,c')$. However, here it is fixed the moduli space of framed stable representations of the ADHM quiver of numerical type $(r,c,1)$, because this is the only case in which the variety is smooth. First, note that there exists the inclusion map
\begin{equation*}
\begin{tikzcd}
\mathcal{N}(r,c,1)\arrow[hookrightarrow]{r}{\iota} & (\mathcal{W}(r,c,1),\langle\mbox{ , }\rangle, \Gamma_1,\Gamma_2,\Gamma_3).
\end{tikzcd}\end{equation*}

Hence, associated with this inclusion, there exists a complex structure on $\mathcal{N}(r,c,1)$ inherited by the pull-back, $\iota^*\Gamma_1$, and a closed degenerate $2$-form $\Omega=\iota^*\omega_2+\sqrt{-1}\iota^*\omega_3$. Indeed, let $(a,b,i,j,a',b',f,0)\in\mathcal{N}(r,c,1)$. Thus,
\begin{align*}
\iota^*\Gamma_1(a,b,i,j,a',b',f,0) =& \Gamma_1(\iota_*a,\iota_*b,\iota_*i,\iota_*j,\iota_*a',\iota_*b',\iota_*f,0)\nonumber \\
=& (\sqrt{-1}a,\sqrt{-1}b,\sqrt{-1}i,\sqrt{-1}j,\sqrt{-1}a',\sqrt{-1}b',\sqrt{-1}f,0)
\end{align*}
is clearly a complex structure on $\mathcal{N}$. Moreover, let $x_1=(a_1,b_1,i_1,j_1,a'_1,b_1',f_1,0)$ and $x_2=(a_2,b_2,i_2,j_2,a'_2,b_2',f_2,0)$ in $\mathcal{N}$. It is easy to check that $(\mathcal{N}, \iota^*\langle\mbox{ , }\rangle, \iota^*\Gamma_1 )$ has a Kähler structure. The $2$-form $\Omega$ is given by,
\begin{align*}
\Omega(x_1,x_2) =& (\iota^*\omega_2+\sqrt{-1}\iota^*\omega_3)(x_1,x_2)  \nonumber\\
=& (\omega_2+\sqrt{-1}\omega_3)(\iota_*x_1,\iota_*x_2) \nonumber \\ 
&= tr(-a_2b_1+b_2a_1-i_2j_1+i_1j_2-a'_2b'_1+b'_2a'_1).
\end{align*} 

Note that taking $u=(0,0,0,0,0,0,f,0)\in T\mathcal{N}$, $\Omega_{X}(u,v)\equiv 0$ for all $v\in T\mathcal{N}$, i.e., $\Omega$ is in fact a degenerate $2$-form. Also, it is easy to check that the $2$-forms $\iota^*\omega_2$ and $\iota^*\omega_3$ satisfy
\begin{equation*}
\left\{\begin{array}{lll}
\iota^*\omega_2(u,v) & = & \iota^*\omega_3(u,\Gamma_1v)\\
\iota^*\omega_3(u,v) & = & -\iota^*\omega_2(u,\Gamma_1v)
\end{array}\right. .
\end{equation*}
In other words, $\mathcal{N}(r,c,1)$ admits the structure of a pre-hyperk\"ahler manifold.


\subsection{An explicit example}

We consider now the case $r=1$, $c=2$ to precisely determine the degeneration locus of the 
closed holomorphic $2$-form $\Omega$ defined above, that is for which points $X\in\mathcal{N}(1,2,1)$ the linear map
\begin{equation*} \begin{array}{lcl}
T_X\mathcal{N}(1,2,1) & \longrightarrow & \left( T_X\mathcal{N}(1,2,1) \right)^* \\
             u        & \longmapsto     & \Omega_X(u,\cdot)
\end{array}
\end{equation*}
fails to be an isomorphism.

First, we need to prove the following auxiliary Lemma.

\begin{lem}\label{lemma-auxiliar}
Let $X=(W,V,V',A,B,I,J,A',B',F,G)$ be a framed stable representation of the enhanced ADHM quiver of numerical type $(1,2,1)$. Thus, there exists a change of basis for $V$ such that
\begin{itemize}
\item[(i)] $A=\begin{bmatrix}A' & 0 \\ 0 & a_2\end{bmatrix}$, $B=\begin{bmatrix}B' & 0 \\ 0 & b_2\end{bmatrix}$, $F=\begin{bmatrix} 1\\ 0\end{bmatrix}$, if $A$ and $B$ are diagonalizable;
\item[(ii)] $A=\begin{bmatrix}A' & 1 \\ 0 & A'\end{bmatrix}$, $B=\begin{bmatrix}B' & B_{12} \\ 0 & B'\end{bmatrix}$, $F=\begin{bmatrix} 1\\ 0\end{bmatrix}$, if $A$ and $B$ are not diagonalizable;
\item[(iii)] $A=A'\begin{bmatrix} 1 & 0 \\ 0 & 1\end{bmatrix}$, $B=\begin{bmatrix}B' & 1 \\ 0 & B'\end{bmatrix}$, $F=\begin{bmatrix} 1\\ 0\end{bmatrix}$, if $A$ is diagonalizable and $B$ is not diagonalizable.
\end{itemize}
\end{lem}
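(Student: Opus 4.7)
The plan is to reduce the ADHM relation $[A,B]+IJ=0$ to $[A,B]=0$, and then invoke the standard classification of commuting pairs of $2\times 2$ matrices.

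First I would choose a basis $\{e_1,e_2\}$ of $V$ with $e_1$ spanning $\im F$, normalized so that $F=\begin{bmatrix}1\\0\end{bmatrix}$. The relations $AF=FA'$ and $BF=FB'$ immediately put $A$ and $B$ in upper-triangular form with $A_{11}=A'$ and $B_{11}=B'$, while $JF=0$ forces $J=[\,0\ \ J_2\,]$.

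Next I would show that $J=0$. Since $A$ and $B$ are upper-triangular the $(2,2)$-entry of $[A,B]$ vanishes, so the ADHM relation forces $I_2J_2=0$ in that entry. The line $\mathbb{C}e_1\subset V$ is visibly $A,B$-invariant and, were $I\in\mathbb{C}e_1$, would be a proper invariant subspace containing $\im I$, violating condition $(S.2)$ of Lemma~\ref{lem:thetastab-equiv-stab}. Hence $I_2\neq 0$, so $J_2=0$ and the ADHM equation collapses to $[A,B]=0$. From this point $(A,B)$ is a commuting pair of upper-triangular $2\times 2$ matrices sharing $e_1$ as an eigenvector with eigenvalues $A'$ and $B'$.

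I would then split into cases by Jordan type. In case $(i)$, commuting diagonalizable matrices are simultaneously diagonalizable, and the common eigenbasis can be chosen with first vector $e_1$, giving the stated diagonal form. In case $(ii)$, non-diagonalizability together with upper-triangularity forces $A_{22}=A'$ with $A_{12}\neq 0$; the commutativity relation $[A,B]=0$ then gives $A_{12}(B_{22}-B')=0$, so $B_{22}=B'$, and the rescaling $e_2\mapsto e_2/A_{12}$ normalizes $A_{12}=1$ while preserving $F$, leaving $B_{12}$ as a free parameter. In case $(iii)$, if $A$ had distinct eigenvalues then commutativity would make $B$ diagonal in the eigenbasis of $A$, contradicting the non-diagonalizability of $B$; hence $A$ has repeated eigenvalue $A'$ and, being diagonalizable, $A=A'\cdot\mathrm{id}$. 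Then $B$ is upper-triangular with $B_{22}=B'$, $B_{12}\neq 0$, and rescaling $e_2$ normalizes $B_{12}=1$ without disturbing $A$. The symmetric situation ``$A$ not diagonalizable, $B$ diagonalizable'' is handled by interchanging the roles of $A$ and $B$.

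The main obstacle is the reduction $J=0$: without it one cannot conclude $[A,B]=0$ and would need to classify non-commuting ADHM pairs. Once this is in hand, the remainder is routine bookkeeping of base changes preserving $F=\begin{bmatrix}1\\0\end{bmatrix}$.
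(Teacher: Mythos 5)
Your proof is correct and follows essentially the same route as the paper: normalize $F=\begin{bmatrix}1\\0\end{bmatrix}$ using that $F(1)$ is a common eigenvector of $A$ and $B$ with eigenvalues $A'$ and $B'$, reduce to a commuting pair of upper-triangular matrices, and split by Jordan type with a final rescaling of $e_2$. The only real difference is that you derive $J=0$ directly from the vanishing of the $(2,2)$-entry of $[A,B]+IJ$ together with condition $(S.2)$, whereas the paper simply quotes the rank-one vanishing $J=0$ from Nakajima; your case (i), argued via simultaneous diagonalization, is also slightly more robust since it covers the possibility that one of $A$, $B$ is scalar, which the paper's choice of a second eigenvector with $a_2\neq A'$ tacitly excludes.
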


\begin{proof}
 	First, note that
 	\begin{equation*}
 	\left\{\begin{array}{c}
 	AF(1)-F(1)A'=0\\
 	BF(1)-F(1)B'=0
 	\end{array}\right.,
 	\end{equation*}
 	i.e., $F(1)\in V$ is an eigenvector of $A$ and $B$. Also, $A'$ and $B'$ are eigenvalues of $A$ and $B$, respectively, associated with the vector $F(1)$. Suppose that both $A$ and $B$ are diagonalizable. Therefore, there exists a vector $w\in V$ that satisfies
 	$$Aw- a_2\cdot1_{V}w=0,$$
 	where $A'\neq a_2\in V'$. There exists a change of basis for V such that
 	\begin{equation*}
 	F=\begin{bmatrix} 1 \\ 0\end{bmatrix},\quad w=\begin{bmatrix}0 \\ 1\end{bmatrix}.
 	\end{equation*}
 	Thus, we obtain
 	\begin{equation*}
 	0= AF-FA' = 
 	\begin{bmatrix} 
 	a_{11} & a_{12}\\ 
 	a_{21} & a_{22} 
 	\end{bmatrix}
 	\begin{bmatrix}
 	1\\
 	0
 	\end{bmatrix} - 
 	\begin{bmatrix} 
 	1\\ 
 	0
 	\end{bmatrix}A' = 
 	\begin{bmatrix}
 	a_{11}-A'\\ 
 	a_{21}
 	\end{bmatrix}.
 	\end{equation*}
 	Hence
 	\begin{equation*}
 	A=\begin{bmatrix}
 	A' & a_{12}\\
 	0  & a_{22}
 	\end{bmatrix}.
 	\end{equation*}
 	Analogously, we obtain
 	\begin{equation*}
 	B=\begin{bmatrix}
 	B' & b_{12}\\
 	0  & b_{22}
 	\end{bmatrix}.
 	\end{equation*}
 	
 	Since $a_2$ is an eigenvalue of A associated with the vector $w\in V$, we get
 	\begin{equation*}
 	\begin{bmatrix} 
 	a_{12}\\ 
 	a_{22}-a_2 
 	\end{bmatrix}=\begin{bmatrix}
 	0\\0
 	\end{bmatrix}
 	\end{equation*}
 	Therefore, 
 	\begin{equation*}
 	A=\begin{bmatrix}
 	A' & 0 
 	\\ 0 & a_2
 	\end{bmatrix}.
 	\end{equation*}
 	Since $r=1$, we have $J=0$. Hence
 	\begin{equation*}
 	\begin{tikzcd}
 	0=[A,B] = 
 	\begin{bmatrix}
 	0 & b_{12}(A'-a_{2})\\ 
 	0 & 0
 	\end{bmatrix}
 	\end{tikzcd}.
 	\end{equation*}
 	Thus $b_{12}=0$ and 
 	\begin{equation*}
 	B=	\begin{bmatrix}
 	B' & 0 \\ 
 	0 & b_{22}
 	\end{bmatrix}.
 	\end{equation*}
 	
 	\indent If both $A$ and $B$ are not diagonalizable, there exists $w\in V$ such that $v:= Aw-A'\cdot 1_{V}$ is an eigenvector of $A$. There exist a change of basis for $V$ such that
 	\begin{align*}
 	F=\begin{bmatrix}1\\0\end{bmatrix},\quad w=\begin{bmatrix}0\\1\end{bmatrix},
 	\end{align*} 
 	and
 	\begin{equation*}
 	v =	\begin{bmatrix}
 	A' & a_{12}\\ 
 	0 & a_{22}
 	\end{bmatrix}
 	\begin{bmatrix}
 	0\\
 	1
 	\end{bmatrix} - 
 	\begin{bmatrix}
 	0\\ 
 	A'
 	\end{bmatrix} = 
 	\begin{bmatrix}
 	a_{12}\\ 
 	a_{22}-A'
 	\end{bmatrix}
 	\end{equation*}
 	Hence,
 	\begin{equation*}
 	\begin{tikzcd}
 	0 = Av-A'\cdot 1_{V}v 
 	= \begin{bmatrix} 
 	a_{12}(a_{22}-A')\\ 
 	(a_{22}-A')^{2} 
 	\end{bmatrix}
 	\end{tikzcd},
 	\end{equation*}
 	i.e., $a_{12}\neq 0$ and $a_{22}=A'$. Thus, 
 	$$
 	A=
 	\begin{bmatrix}
 	A' & a_{12}\\ 
 	0 & A'
 	\end{bmatrix}.
 	$$
 	In the other hand,
 	\begin{equation*}
 	\begin{tikzcd}
 	0 = [A,B] = 
 	\begin{bmatrix} 
 	0 & b_{22}-B'\\ 
 	0 & 0
 	\end{bmatrix}
 	\end{tikzcd}
 	\end{equation*}
 	Therefore 
 	$$
 	B=	\begin{bmatrix}
 	B' & b_{12}\\ 
 	0 & B'
 	\end{bmatrix}. 
 	$$
 	Let 
 	
 	$$
 	S=	\begin{bmatrix}
 	1 & 0 \\ 
 	0 & \frac{1}{a_{12}}
 	\end{bmatrix}.
 	$$ 
 	Then we obtain
%
 	\begin{equation*}
 	SAS^{-1} = 
 	\begin{bmatrix}
 	A' & 1 \\ 
 	0 & A'
 	\end{bmatrix}, \quad
 	SBS^{-1} = 	
 	\begin{bmatrix}
 	B' & b_{12}a_{12} \\ 
 	0  & B'
 	\end{bmatrix}, \quad
 	SF = 
 	\begin{bmatrix}
 	1 \\ 
 	0
 	\end{bmatrix}.
 	\end{equation*}
 	Denoting $B_{12}=b_{12}a_{12}$, this concludes the proof of the case where $A$ and $B$ are both non-diagonalizable. The last case, $A$ diagonalizable and $B$ not diagonalizable, is entirely analogous. 
 \end{proof}
 
 We are finally in position to prove the main result of this section.

\begin{pps}\label{prop28}
Let $\mathcal{N}(1,2,1)$ be the moduli space of framed stable representations of the enhanced ADHM quiver of numerical type $(1,2,1)$. Fix a framed stable representation $X=(A,B,I,J,A',B',F)$. Then the $2$-form $\Omega_X$ defined on $T_X\mathcal{N}(1,2,1)$ is non-degenerate if and only if the matrices associated with the endomorphisms $A$ and $B$ are diagonalizable. 
\end{pps}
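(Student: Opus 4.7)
The plan is to invoke Lemma~\ref{lemma-auxiliar} to put $X$ into one of three canonical forms, and in each to compute the matrix of $\Omega_X$ on the four-dimensional tangent space $T_X\mathcal{N}(1,2,1)=H^1(\mathcal{C}(X))$ directly; the determinant will turn out to be nonzero exactly in case (i).

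The first step is to describe $\ker d_1$ explicitly in each canonical form. Writing a deformation as $(a,b,i,j,a',b',f)$ as in the complex \eqref{def-cpx2} and using that $r=1$ forces $J=0$, the $A$-block equation $Af+aF-Fa'-fA'=0$ determines $a_{11}$ in terms of $a'$ and $f_2$ and forces $a_{21}=0$, and analogously for $b$; the equation $[a,B]+[A,b]+Ij+iJ=0$ together with stability (which lets me normalise $I$ to $(0,1)^T$ using the stabiliser) forces $j=0$ and imposes one scalar relation coming from the $(1,2)$-entry. This gives a nine-dimensional $\ker d_1$ in every case. The second step is to compute $\im d_0$, which is five-dimensional by the injectivity established in Proposition~\ref{prop:acaolivre}; its explicit form shows that $a'$ and $b'$ always survive as two classes in $H^1$, and in cases (ii) and (iii) the coboundary action satisfies an extra linear relation --- for instance $b_{12}=B_{12}a_{12}$ in case (ii), because both $[h,A]$ and $[h,B]$ have $(1,2)$-entry proportional to $h_{11}-h_{22}$ --- that is not present in $\ker d_1$.

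Next I choose a basis $\{e_1,e_2,e_3,e_4\}$ of $H^1(\mathcal{C}(X))$ and evaluate $\Omega_X(e_i,e_j)=\mathrm{tr}(-a^{(j)}b^{(i)}+b^{(j)}a^{(i)}-a'^{(j)}b'^{(i)}+b'^{(j)}a'^{(i)})$; the $i,j$-contributions drop since the representatives can be chosen with $i$- and $j$-components zero. In case (i) I take $e_1,\dots,e_4$ supported on $a',b',a_{22},b_{22}$ respectively, and the resulting matrix is block-diagonal with two nondegenerate antisymmetric $2\times 2$ blocks. In case (ii) the deformation $b=E_{12}$ (every other component zero) lies in $\ker d_1$ but cannot be a coboundary, since any element of $\im d_0$ satisfies the relation $b_{12}=B_{12}a_{12}$; a direct substitution then gives $\Omega_X(E_{12},\,\cdot)\equiv 0$, because the $a$-components of the other basis vectors can be taken upper-triangular, and the trace of $a^{(y)}E_{12}$ for such an $a^{(y)}$ vanishes. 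Case (iii) is handled symmetrically: there $[h,A]=0$ for every $h$ since $A$ is a scalar matrix, so the deformation $a=E_{12}$ (all other components zero) plays the role of $b=E_{12}$ above.

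The main technical obstacle will be the bookkeeping in cases (ii) and (iii), where the Jordan block structure of $A$ or $B$ couples the $(1,2)$-entry constraint coming from $[a,B]+[A,b]=0$ to the off-diagonal components of the coboundary action $h\mapsto([h,A],[h,B])$, and one must track carefully how the free parameters $f_2,a_{22}$ on one hand and $a_{12},b_{12}$ on the other shift under $h_{21}$ and $h_{11}-h_{22}$. Once $\ker d_1$ and $\im d_0$ are written out in coordinates, exhibiting the explicit degenerate direction and checking non-degeneracy in case (i) reduces to a routine linear-algebra verification.
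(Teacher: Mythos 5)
Your proposal is correct and follows essentially the same route as the paper: reduce to the canonical forms of Lemma~\ref{lemma-auxiliar}, write out $\ker d_1$ and $\im d_0$ in coordinates, and evaluate $\Omega_X$ on $H^1(\mathcal{C}(X))$; your degenerate directions $b=E_{12}$ (resp.\ $a=E_{12}$) in the non-diagonalizable cases are actually simpler than the vectors the paper exhibits, and they do work, since every tangent vector has $a_{21}=0$ (resp.\ $b_{21}=0$) while no coboundary can have $b_{12}\neq B_{12}a_{12}$ (resp.\ a nonzero $a$-component when $A$ is scalar). One minor slip: in the diagonalizable case stability forces the first component of $I$ to be nonzero, so $I$ cannot be normalized to $(0,1)^T$ there — this is harmless, because $j=0$ already follows from $jF=0$ together with the $(2,2)$-entry of $Ij$, and nothing else in your computation depends on the value of $I$.
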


\begin{proof}
Recall that if $r=1$, then the map $J\in Hom(V,W)$ must vanish, since $X$ is stable (see \cite[Proposition 2.8]{nakajima}), and recall that if $c'=1$, thus $[A',B']=0$, for all $A',$ $B'\in V'$. Thus, the enhanced ADHM equations are reduced to
\begin{align*}
 [A,B] =0,\quad AF-FA'=0,\quad BF-FB'=0.
\end{align*} 
 	
 	Suppose that $A$ and $B$ are not diagonalizable. Thus, it follows from Lemma \ref{lemma-auxiliar} (ii) that there exists change of basis for $V$ such that
 	%
 	\begin{equation}
 	\label{eq:X-n-diag-01}
 	A=	\begin{bmatrix}
 	A' & 1 \\ 
 	0 & A'
 	\end{bmatrix},\quad 
 	B=	\begin{bmatrix}
 	B' & B_{12} \\ 
 	0 & B'
 	\end{bmatrix},\quad 
 	F=	\begin{bmatrix} 
 	1\\
 	0
 	\end{bmatrix}
 	\end{equation} 
 	In order to $X=(A,B,I,J,A',B',F)$ is a stable representation of the enhanced ADHM quiver, 
 	\begin{equation}
 	\label{eq:X-n-diag-03}
 	I =\left[\begin{array}{c}
 	\mu\\
 	1
 	\end{array}\right].
 	\end{equation}
 	Indeed, $F$ is clearly injective. Furthermore, Consider 
 	\begin{equation*}
 	I=\left[\begin{array}{c}
 	i_1\\
 	i_2\end{array}\right].
 	\end{equation*}
 	
 	Note that if $i_2\neq 0$, then there exists a basis of $W$ such that $i_2=1$ and   
 	\begin{equation}
 	\label{eq:I-n-diag}
 	I=\left[\begin{array}{c}
 	\mu\\
 	1\end{array}\right]
 	\end{equation}
 	for some $\mu\in \mathbb{C}$
 	Indeed, suppose that there exists $0\subset S\subset V$ such that 
 	$$
 	A(S), B(S), I(W)\subset S.
 	$$
 	Let $0\neq v \in I(W)$, thus, there exists $w\in W$ such that 
 	\begin{equation*}
 	v=Iw=\left[\begin{array}{c}
 	i_1\\
 	i_2\end{array}\right]\cdot w = \left[\begin{array}{c}
 	i_1w\\
 	i_2w\end{array}\right] \in S
 	\end{equation*}
 	Moreover, since $v\in S$, $A(v)$ $B(v)\in S$ by hypothesis. Hence
 	\begin{equation*}
 	A(v)= \left[\begin{array}{cc}
 	A' & 1\\
 	0 & A'\end{array}\right]\cdot\left[\begin{array}{c}
 	i_1w\\
 	i_2w\end{array}\right] = \left[\begin{array}{c}
 	A'i_1 +i_2 \\
 	A'i_2\end{array}\right]\cdot w
 	\end{equation*}
 	
 	Analogously, 
 	\begin{equation*}
 	B(v)= \left[\begin{array}{c}
 	B'i_1 +B_{12}i_2 \\
 	B'i_2\end{array}\right]\cdot w
 	\end{equation*}
 	
 	Then, if $i_2=0$, $S=\langle i_1\rangle\subsetneq V$ is a subset such that $A(S), B(S), I(w)\subset S$. Indeed, 
 	\begin{equation*}
 	I(w) = \left[\begin{array}{c}
 	i_1w\\
 	0\end{array}\right] \in S
 	\end{equation*}
 	for all $w\in W$, which means $I(W)\subset S$. Let $s\in S$ given by $s=\lambda i_1$. Thus,
 	\begin{equation*}
 	A(s)= \left[\begin{array}{cc}
 	A' & 1\\
 	0 & A'\end{array}\right]\cdot\left[\begin{array}{c}
 	\lambda i_1\\
 	0\end{array}\right]= \left[\begin{array}{c}
 	A' \lambda i_1\\
 	0
 	\end{array}\right]\in S
 	\end{equation*}
 	
 	for all $s\in S$. And therefore, $A(S)\subset S$. Analogously, $B(S)\subset S$. Thus, in order to prove that $X$ is stable, $i_2\neq 0$ and then there exists a basis of $W$ such that $I$ is given by \eqref{eq:I-n-diag}. This concludes that if $X$ is a framed stable representation of the enhanced ADHM quiver, then $A$, $B$, $I$, $A'$, $B'$, $F$ are of the form \eqref{eq:X-n-diag-01}, \eqref{eq:X-n-diag-03}.\\
 	\indent Now, consider $v\in T_X\mathcal{N}$ given by $v=(a,b,i,j,a',b',f)$ such that $X$ satisfies \eqref{eq:X-n-diag-01} and \eqref{eq:X-n-diag-03}. Then, if follows from Theorem \ref{teo:N-eh-suave} and from the fact that $J=0$ that
 	\begin{align*}
 	j=0,\quad [a,B] + [A,b]=0,\quad fA'+Fa'-aF-Af=0,\quad fB'+Fb'-bF-Bf=0.
 	\end{align*}
 	Then, denoting 
 	\begin{equation*}
 	a=\left[\begin{array}{cc}
 	a_{11} & a_{12}\\
 	a_{21} & a_{22}\end{array}\right],\quad b=\left[\begin{array}{cc}
 	b_{11} & b_{12}\\
 	b_{21} & b_{22}\end{array}\right], \quad i=\left[\begin{array}{c}
 	i_{1} \\
 	i_{1}\end{array}\right], \quad f=\left[\begin{array}{c}
 	f_{1} \\
 	f_{2}\end{array}\right]
 	\end{equation*}
 	one gets 
 	\begin{equation*}
 	\begin{array}{lcl}
 	fA'+Fa'-aF-Af & = & 
 	 \left[\begin{array}{c}
 	a'-a_{11}-f_{2} \\
 	a_{21}\end{array}\right]
 	\end{array} .
 	\end{equation*}
 	Therefore, 
 	\begin{equation}
 	\label{eq:a-b-n-diag-aux01}
 	fA'+Fa'-aF-Af = 0 \Leftrightarrow \left\{\begin{array}{lcl}
 	a'&= & a_{11} + f_{2}\\
 	a_{21} & = & 0
 	\end{array}\right. .
 	\end{equation}
 	Analogously,
 	\begin{equation}
 	\label{eq:a-b-n-diag-aux02}
 	fB'+Fb'-bF-Bf = 0 \Leftrightarrow \left\{\begin{array}{lcl}
 	b'&= & b_{11} + B_{12}f_{2}\\
 	b_{21} & = & 0
 	\end{array}\right.
 	\end{equation}
 	
 	It follows from equations \eqref{eq:a-b-n-diag-aux01} and \eqref{eq:a-b-n-diag-aux02} that
 	\begin{equation*}
 	\begin{array}{lcl}
 	[a,B]+[A,b] 
 	&=& \left[\begin{array}{cc}
 	0 & B_{12}(a_{11}-a_{22})+b_{22}-b_{11}\\
 	0 & 0\end{array}\right] \\
 	\end{array}
 	\end{equation*}
 	Therefore, 
 	\begin{equation}
 	\label{eq:a-b-n-diag-aux03}
 	[a,B]+[A,b] = 0 \Leftrightarrow B_{12}(a_{11}-a_{22})+b_{22}-b_{11} = 0.
 	\end{equation}
 	Thus, $v=(a,b,i,j,a',b',f)\in T_X\mathcal{N}$ is such that
 	\begin{align*}
 	a=\left[\begin{array}{cc}
 	a_{11} & a_{12}\\
 	0 & a_{22}\end{array}\right],&& \quad
 	b=\left[\begin{array}{cc}
 	b_{11} & b_{12}\\
 	0 & b_{22}\end{array}\right], && \quad
 	i=\left[\begin{array}{c}
 	i_{1} \\
 	i_{1}\end{array}\right], \\ 
 	f=\left[\begin{array}{c}
 	f_{1} \\
 	f_{2}\end{array}\right], &&\quad 
 	a'=f_2+a_{11}, && \quad
 	b'=B_{12}f_2+b_{11}.
 	\end{align*}
 	and satisfies \eqref{eq:a-b-n-diag-aux03}.\\
 	\indent Thus, for $u=(a,b,i,j,a',b',f)$ and $v=(\widetilde{a}, \widetilde{b}, \widetilde{i}, \widetilde{a'}, \widetilde{b'}, \widetilde{f})\in T_X\mathcal{N}$, 
 	\begin{equation*}
 	\begin{array}{lcl}
 	\Omega_X(u,v) 
 	& =	& ( 2(-\widetilde{a_{11}}b_{11} + \widetilde{b_{11}}a_{11}) - \widetilde{a_{22}}b_{22} + \widetilde{b_{22}}a_{22} +f_2(\widetilde{b_11}-\widetilde{a_11}) + \widetilde{f_2}(a_{11}-b_{11}) ).
 	\end{array}
 	\end{equation*} 
 	
 	Thus, if $u\in T_X\mathcal{N}$ satisfies
 	\begin{equation}
 	\label{eq:x-n-diag-aux-eq}
 	\left\{\begin{array}{lcl}
 	b_{11} & = &B_{12}a_{11}\\
 	f_2  &   = & 0
 	\end{array}\right.,
 	\end{equation}
 	
 	then
 	\begin{equation*}
 	\begin{array}{lcl}
 	\Omega_X(u,v) & = & ( 2(\widetilde{b_{11}}a_{11}-\widetilde{a_{11}}B_{12}a_{11}) - \widetilde{a_{22}}b_{22} + \widetilde{b_{22}}a_{22}) 
 	\end{array}
 	\end{equation*}
 	
 	Note that it follows from equation \eqref{eq:a-b-n-diag-aux03} that if $B_{12}a_{11}=b_{11}$, then $B_{12}a_{22}=b_{22}$. Thus one obtains
 	\begin{equation*}
 	\begin{array}{lcl}
 	\Omega_X(u,v) & = & 2(-\widetilde{a_{11}}B_{12}a_{11} + \widetilde{b_{11}}a_{11}) - \widetilde{a_{22}}B_{12}a_{22} + \widetilde{b_{22}}a_{22} \\
 	& = & 2a_{11}(\widetilde{b_{11}} -B_{12}\widetilde{a_{11}}) + a_{22}(\widetilde{b_{22}}-B_{12}\widetilde{a_{22}}) 
 	\end{array}.
 	\end{equation*}
 	
 	Since $v\in T_X\mathcal{N}$, 
 	\begin{align*}
 	B_{12}\widetilde{a_{11}}-B_{12}\widetilde{a_{22}}+\widetilde{b_{22}}-\widetilde{b_{11}}  =  0 \quad \Rightarrow\quad \widetilde{b_{22}} - B_{12}\widetilde{a_{22}} = \widetilde{b_{11}} - B_{12}\widetilde{a_{11}} .
 	\end{align*}
 	
 	Therefore
 	\begin{equation*}
 	\begin{array}{lcl}
 	\Omega_X(u,v) & = & 2a_{11}(\widetilde{b_{11}} -B_{12}\widetilde{a_{11}}) + a_{22}(\widetilde{b_{11}} -B_{12}\widetilde{a_{11}})\\
 	& = & (2a_{11}+a_{22})(\widetilde{b_{11}} -B_{12}\widetilde{a_{11}})
 	\end{array}
 	\end{equation*}
 	
 	Then, if $u\in T_X\mathcal{N}$ satisfies \eqref{eq:x-n-diag-aux-eq} and $a_{22}=-2a_{11}$, 
 	\begin{align*}
 	\Omega_X(u,\cdot) \equiv 0
 	\end{align*}
 	
 	Denote by $[u]$ the equivalence class of $u$. It is easy to check that if $a_{11}\neq 0$, $[u] \neq [0]$. Indeed, it follows from Theorem \ref{teo:N-eh-suave} that $[u]=[0]$ if and only if $u\in Im(d_0)$, where
 	\begin{equation*}
 	\begin{array}{lcll}
 	do:& End(V)\oplus End(V') & \longrightarrow & \mathbb{X}\\
 	& (h,h')               & \longmapsto     & ([h,A], [h,B], hI, -Jh, [h',A'], [h',B'], hF-Fh')
 	\end{array}.
 	\end{equation*}
 	
 	Since $c'=1$, $[h',A']=0$. Then, if $a_{11}\neq 0$, since $f_{2}=0$ and $a'=a_{11}+f_{2}$, $a'\neq 0$. Therefore, there is no $(h,h')\in End(V)\oplus End(V')$ such that $d_0(h,h')=u$. In other words, 
 	\begin{align*}
 	[u] = \left[\left(\left[\begin{array}{cc}
 	a_{11} & a_{12}\\
 	0      & -2a_{11}
 	\end{array}\right], 
 	\left[\begin{array}{cc}
 	B_{12}a_{11} & b_{12}\\
 	0      & -2B_{12}a_{11}
 	\end{array}\right],
 	\left[\begin{array}{c}
 	i_1\\
 	i_2
 	\end{array}\right],
 	a_{11}, a_{11}, \left[\begin{array}{c}
 	f_{1}\\
 	0
 	\end{array}\right]\right)
 	\right]\neq [0]
 	\end{align*}
 	is a non-null vector in $T_X\mathcal{N}(1,2,1)$ such that $\Omega_X(u,v)=0$, for all $v\in T_X\mathcal{N}(1,2,1)$. This concludes that if $X$ satisfies \eqref{eq:X-n-diag-01}, and \eqref{eq:X-n-diag-03}, then $\Omega_X$ is degenerate.\\
 	\indent Now suppose that $A$ is dianagolizable and $B$ is non-diagonalizable. Analogously to the previous case, one can check that $u\in T_{X}\mathcal{N}(1,2,1)$ is given by
 	\begin{align*}
 	u=\left(\begin{pmatrix}0 & a_{12}\\ 0 & a_{22}\end{pmatrix},\begin{pmatrix}b_{11} & b_{12}\\ 0 & b_{22}\end{pmatrix}, \begin{pmatrix}i_1\\ i_2\end{pmatrix}, 0, 0, b_{11}+f_2, \begin{pmatrix}f_1\\ f_2\end{pmatrix} \right)
 	\end{align*} and \begin{align*}
 	\Omega_X(u,v)= \widetilde{b_{22}}a_{22} - \widetilde{a_{22}}b_{22} 
 	\end{align*}
 	Hence, taking $a_{22}=b_{22}=0$ e $b_{11}\neq f_2$, i.e., 
 	\begin{align*}
 	u=\left(\begin{pmatrix}0 & a_{12}\\ 0 & 0\end{pmatrix},\begin{pmatrix}b_{11} & b_{12}\\ 0 & 0\end{pmatrix}, \begin{pmatrix}i_1\\ i_2\end{pmatrix}, 0, 0, b_{11}+f_2, \begin{pmatrix}f_1\\ f_2\end{pmatrix} \right)\\
 	\end{align*}
 	one gets $\Omega_x(u,v)=0$ for all $v\in T_X\mathcal{N}(1,2,1)$ with $[u]\neq[0]$. Thus $\Omega_X$ is degenerate.\\
 	\indent In order to conclude the proof, we have now to prove that if $$X=(A,B,I,J,A',B',F)$$ is such that $A$ and $B$ are diagonalizable matrices, then it follows from Lemma \ref{lemma-auxiliar} that there exists a change of basis for $V$ such that
 	\begin{equation}
 	\label{eq:X-diag}
 	A= \left[\begin{array}{cc}
 	A' & 0 \\
 	0   & a_2
 	\end{array}
 	\right],\quad
 	B= \left[\begin{array}{cc}
 	B' & 0 \\
 	0   & b_2
 	\end{array}
 	\right],\quad
 	F =\left[\begin{array}{c}
 	1\\
 	0
 	\end{array}\right].
 	\end{equation}

 	Moreover, analogously to the previous case, one can check that, in order for $X$ to be stable, 
 	\begin{equation*}
 	I =\left[\begin{array}{c}
 	\lambda\\
 	1
 	\end{array}\right], \quad a_1\neq a_2,\quad b_1\neq b_2 
 	\end{equation*}
 	
 	and a vector $u=(a,b,i,j,a',b',f)\in T_X\mathcal{N}$ is given by
 	\begin{equation*}
 	u = \left(\left[\begin{array}{cc}
 	a_{11}              & a_{12}\\
 	f_{2}(a_1-a_2)      & a_{22}
 	\end{array}\right], 
 	\left[\begin{array}{cc}
 	b_{11}                     & \delta a_{12}\\
 	\delta f_{2}(a_1-a_2)      & b_{22}
 	\end{array}\right],
 	\left[\begin{array}{c}
 	i_1\\
 	i_2
 	\end{array}\right],
 	a_{11}, b_{11}, \left[\begin{array}{c}
 	f_{1}\\
 	f_{2}
 	\end{array}\right]\right)
 	\end{equation*}
 	Therefore, one can check that given $u=(a,b,i,j,a',b',f)$, $v=(\widetilde{a},\widetilde{b},\widetilde{i},\widetilde{j},\widetilde{a'},\widetilde{b'},\widetilde{f} )\in T_X\mathcal{N}$,
 	\begin{align*}
 	\Omega_X(u,v) = 2(a_{11}\widetilde{b_{11}} - \widetilde{a_{11}}b_{11})+ a_{22}\widetilde{b_{22}} - \widetilde{a_{22}}b_{22}.
 	\end{align*}
 	
 	Moreover, $\Omega_X(u,\cdot)\equiv 0$ if and only if $a_{11}=b_{11}=a_{22}=b_{22}=0$. Indeed, if $a_{11}=b_{11}=a_{22}=b_{22}=0$, it is trivial that $\Omega_X(u,\cdot)\equiv 0$. Now suppose that $\Omega_X(u,\cdot)=0$ for all $v\in T_X\mathcal{N}$. In particular, by taking
 	\begin{align*}
 	v = \left(\left[\begin{array}{cc}
 	0                   & a_{12}\\
 	f_{2}(a_1-a_2)      & 0
 	\end{array}\right], 
 	\left[\begin{array}{cc}
 	\overline{a_{11}}                     & \delta a_{12}\\
 	\delta f_{2}(a_1-a_2)                 & \overline{a_{22}}
 	\end{array}\right],
 	\left[\begin{array}{c}
 	i_1\\
 	i_2
 	\end{array}\right],
 	0, \overline{a_{11}}, \left[\begin{array}{c}
 	f_{1}\\
 	f_{2}
 	\end{array}\right]\right),\\
 	w = \left(\left[\begin{array}{cc}
 	-\overline{b_{11}}                   & a_{12}\\
 	f_{2}(a_1-a_2)      & -\overline{b_{22}}
 	\end{array}\right], 
 	\left[\begin{array}{cc}
 	0                     & \delta a_{12}\\
 	\delta f_{2}(a_1-a_2)                 & 0
 	\end{array}\right],
 	\left[\begin{array}{c}
 	i_1\\
 	i_2
 	\end{array}\right],
 	-\overline{b_{11}}, 0, \left[\begin{array}{c}
 	f_{1}\\
 	f_{2}
 	\end{array}\right]\right).
 	\end{align*}
 	
 	$\Omega_X(u,v)=\Omega_X(u,w)=0$, but
 	\begin{align*}
 	\Omega_X(u,v) = 2a_{11}\overline{a_{11}} + a_{22}\overline{a_{22}},
 	\end{align*}
 	and this vanishes if and only if $a_{11}=a_{22}=0$. Moreover,
 	\begin{align*}
 	\Omega_X(u,w) = 2b_{11}\overline{b_{11}} + b_{22}\overline{b_{22}}
 	\end{align*}
 	and this vanishes if and only if $b_{11}=b_{22}=0$. Therefore, $\Omega_x(u,\cdot)\equiv 0$ if and only if 
 	\begin{equation*}
 	u = \left(\left[\begin{array}{cc}
 	0              & a_{12}\\
 	f_{2}(a_1-a_2)      & 0
 	\end{array}\right], 
 	\left[\begin{array}{cc}
 	0                     & \delta a_{12}\\
 	\delta f_{2}(a_1-a_2)      & 0
 	\end{array}\right],
 	\left[\begin{array}{c}
 	i_1\\
 	i_2
 	\end{array}\right],
 	0, 0, \left[\begin{array}{c}
 	f_{1}\\
 	f_{2}
 	\end{array}\right]\right).
 	\end{equation*}
 	
 	However, $u=d_0(h,h')$, for
 	\begin{align*}
 	h=&\left[\begin{array}{cc}
 	f_{1}-\frac{a_{12}+i_1(i_1-i_2)-\lambda(a_1-a_2)f_{1}}{\lambda(a_1-a_2)} & -\frac{a_{12}}{(a_1-a_2)}\\
 	f_{2} & i_2-\lambda f_2
 	\end{array}\right],\\ h'=& \frac{a_{12}+i_1-\lambda(a_1-a_2)f_{1}}{\lambda(a_1-a_2)}.
 	\end{align*}
 	Indeed, one can check that
 	\begin{equation*}
 	\begin{array}{l}
 	d_0(h,h') =\\ 
 	\left(\begin{array}{l}(a_1-a_2)H,
 	(b_1-b_2)H,
 	\left[\begin{array}{c}
 	\lambda h_{11}+h_{12}\\
 	\lambda h_{21} + h_{22}
 	\end{array}\right],
 	0,
 	0,
 	0,
 	\left[\begin{array}{cc}
 	h_{11}-h'\\
 	h_{21}
 	\end{array}\right]\end{array}\right)
 	\end{array}  
 	\end{equation*}
 	where
 	\begin{align*}
 	H=\left[\begin{array}{cc}
 	0 & -h_{12}\\
 	h_{21} & 0
 	\end{array}\right]
 	\end{align*}
 	and then, $d(h,h')=u$. This means that $[u]=[0]$. In other words, if $X$ satisfies \eqref{eq:X-diag}, $\Omega_X$ is non-degenerate.
 \end{proof}

Proposition \ref{prop28} implies that the pre-symplectic form $\Omega$ is generically non-degenerate on $\mathcal{N}(1,2,1)$. We believe that the same holds for $\mathcal{N}(1,c,1)$ with arbitrary $c$.


\end{document}